\documentclass[11pt,reqno]{amsart}

\usepackage[T1]{fontenc}
\usepackage{lmodern}
\usepackage{microtype}
\usepackage{textcomp}
\usepackage{amsmath,amssymb,amsthm,mathtools}
\usepackage{bm}
\usepackage{enumitem}
\usepackage{float}
\usepackage{needspace}
\usepackage{tikz}
\usetikzlibrary{arrows.meta,positioning,calc,fit}
\usepackage{xcolor}
\usepackage[colorlinks=true,linkcolor=blue!50!black,citecolor=blue!50!black,urlcolor=blue!60!black]{hyperref}
\usepackage[nameinlink,capitalise]{cleveref}

\allowdisplaybreaks
\setlist{itemsep=2pt,topsep=4pt}

\definecolor{deepblue}{RGB}{31,65,105}
\definecolor{softgray}{RGB}{245,247,249}
\definecolor{softblue}{RGB}{238,244,251}
\definecolor{coregreen}{RGB}{219,239,222}
\definecolor{coregreenline}{RGB}{42,122,69}
\definecolor{frontorange}{RGB}{255,229,194}
\definecolor{frontline}{RGB}{191,98,24}
\definecolor{collarviolet}{RGB}{235,228,247}

\tikzset{
  nipl/.style={line width=.6pt,draw=black!70},
  nipa/.style={-{Latex[length=2mm,width=1.3mm]},line width=.6pt,draw=black!70},
  nipbox/.style={draw=black!65,fill=gray!7,rounded corners=1.5pt,
    inner sep=4pt,align=center}
}

\newtheorem{theorem}{Theorem}
\newtheorem{proposition}{Proposition}[section]
\newtheorem{lemma}{Lemma}[section]
\newtheorem{corollary}{Corollary}[section]
\theoremstyle{definition}
\newtheorem{definition}{Definition}[section]
\newtheorem*{openquestion}{Question}
\theoremstyle{remark}
\newtheorem{remark}{Remark}[section]

\newcommand{\D}{\mathbb D}
\newcommand{\T}{\mathbb T}
\newcommand{\R}{\mathbb R}
\newcommand{\Z}{\mathbb Z}
\newcommand{\N}{\mathbb N}
\newcommand{\Per}{\operatorname{Per}}
\newcommand{\rot}{\operatorname{rot}}
\newcommand{\id}{\operatorname{id}}
\newcommand{\normT}[1]{\lVert #1\rVert_{\T}}
\newcommand{\Bclass}{\mathcal B}
\newcommand{\Bclassbeta}[1]{\mathcal B_{#1}}
\newcommand{\omegaZero}{\omega_0}

\title[Non-rigid pseudo-rotations]{Non-rigid disk pseudo-rotations in $C^1$ and $C^{1,\beta}$}

\hypersetup{pdftitle={Non-rigid Irrational Pseudo-Rotations of the Disk: C1 and Holder Counterexamples to C0-Rigidity},pdfauthor={Jian Wang and Peizheng Yu},pdfsubject={Area-preserving disk pseudo-rotations, counterexamples to C0-rigidity, and Holder regularity}}

\author{Ziran Liu}
\address{Shanghai Institute for Mathematics and Interdisciplinary Sciences (SIMIS), Shanghai 200433, China}
\address{Research Institute of Intelligent Complex Systems, Fudan University, Shanghai 200433, China}
\email{zliu@simis.cn}

\author{Jian Wang}
\thanks{Jian Wang was partially supported by NSFC (No. 12361141812) and the Fundamental Research Funds for the Central University (No. 63213122).}
\address{School of Mathematical Sciences and LPMC, Nankai University, Tianjin 300071, PR China}
\email{wangjian@nankai.edu.cn}

\author{Peizheng Yu}
\address{School of Mathematics, Shandong University, Jinan 250100, PR China}
\email{yupzh@sdu.edu.cn}

\date{}

\subjclass[2020]{37E30, 37E45, 37C40, 37J10, 37A05}
\keywords{irrational pseudo-rotation, $C^0$-rigidity, Brjuno condition,
Denjoy diffeomorphism, area-preserving diffeomorphism, H\"older regularity}

\begin{document}

\begin{abstract}
We construct area-preserving irrational pseudo-rotations of the closed disk
that fail to be $C^0$-rigid.  For every Brjuno rotation number we obtain
$C^1$ examples, and they may be chosen arbitrarily $C^1$-close to the rigid
rotation with the same rotation number.  For $0<\beta<1$, a weighted Brjuno
condition gives $C^{1,\beta}$ examples that are arbitrarily close in the
$C^{1,\beta}$ topology; this weighted range contains every rotation number
with denominator-growth exponent strictly smaller than $1/\beta$ and also
some numbers at the critical exponent. For a suitable Brjuno--Liouville rotation number, we also obtain an example whose derivative growth is not bounded by any polynomial.
\end{abstract}

\maketitle
\raggedbottom

\section{Introduction}

Let $\D=\{z\in\R^2:|z|\le1\}$ with the standard area form $\omegaZero=dX\wedge dY$, and write $\mathcal R_\alpha(z)=e^{2\pi i\alpha}z$ for the rigid rotation.  An irrational disk pseudo-rotation is called $C^0$-rigid if some sequence of iterates converges uniformly to the identity.  Several positive rigidity results are known at higher regularity.  Bramham established $C^0$-rigidity for smooth pseudo-rotations whose rotation number is sufficiently Liouvillean \cite{BramhamRigid}.  Avila--Fayad--Le Calvez--Xu--Zhang proved $C^{k-1}$-rigidity for $C^k$ pseudo-rotations, $k\ge2$, with non-Brjuno rotation number \cite{AFLXZ}.  See \cite{AFLXZ,BramhamZhangSurvey,FayadKrikorian} for broader discussions of the rigidity problem.

Bramham asked whether every irrational pseudo-rotation is $C^0$-rigid \cite[Question~2]{BramhamRigid}; in that paper, an irrational pseudo-rotation is assumed to be $C^\infty$.  Bramham's question was subsequently restated in the following regularity-sensitive form by Avila--Fayad--Le Calvez--Xu--Zhang \cite[Question~3]{AFLXZ} and was also recorded by Fayad--Krikorian \cite[Question~13]{FayadKrikorian}:
\begin{center}
	\emph{Is every $C^k$ pseudo-rotation $f$ $C^0$-rigid?}
\end{center}
Here $C^k$ denotes the regularity class: finite integers $k\ge1$ give the finite-differentiability cases, whereas $C^\infty$ and $C^\omega$ denote the smooth and real-analytic categories.  

At the purely topological level, Avila--Fayad--Le Calvez--Xu--Zhang gave, for every irrational $\alpha$, an area-preserving $C^0$ pseudo-rotation of the disk with rotation number $\alpha$ that is not $C^0$-rigid \cite[p.~681]{AFLXZ}.  Their construction places a Denjoy circle homeomorphism on one invariant circle and then uses the Oxtoby--Ulam theorem to conjugate the resulting full-support invariant measure to Lebesgue area.  Thus non-rigidity is already possible for area-preserving homeomorphisms; the issue addressed here is whether it persists under differentiable regularity.

\Needspace{3\baselineskip}
Theorem~\ref{thm:main}\textnormal{(i)} below gives a negative answer, for every
Brjuno rotation number, to the endpoint case $k=1$ of
\cite[Question~3]{AFLXZ}, equivalently
\cite[Question~13]{FayadKrikorian}.
At the level of arithmetic classes, these counterexamples occupy the Brjuno
side complementary to the non-Brjuno rigidity theorem of
Avila--Fayad--Le Calvez--Xu--Zhang for $C^k$ pseudo-rotations with $k\ge2$.
In their proof, the $C^2$ regularity requirement enters through an effective
finite-information version of Katok's closing lemma for area-preserving
surface diffeomorphisms, which assumes a bound on second derivatives
\cite[Theorem~4 and the proof of Theorem~1]{AFLXZ}.
The regularities are different, so this comparison does not assert a
$C^1$ arithmetic dichotomy.  The $C^{1,\beta}$ statements below strengthen
the endpoint construction while remaining strictly below $C^2$; they do
not settle the finite-differentiability cases $k\ge2$, the smooth case,
or the real-analytic case of the cited rigidity question.

\subsection{Definitions and conventions}

We write $\T=\R/\Z$ and denote by $[x]=x+\Z\in\T$ the class of
$x\in\R$.  We also write $R_\alpha(x)=x+\alpha$ on $\T$ and
$\normT{x}=\min_{p\in\Z}|x-p|$ for $x\in\R$. All logarithms are natural unless the base is explicitly indicated. Rotation numbers are taken modulo $\Z$, with their
representatives in $(0,1)$ used when convenient.   For positive sequences, $u_n\asymp v_n$
means that $c\,v_n\le u_n\le C\,v_n$ for all sufficiently large $n$, for
some $0<c\le C<\infty$ independent of $n$; these constants may depend on
fixed parameters such as $\beta$.  This denotes two-sided comparability,
not convergence of $u_n/v_n$ to one.  We use $u_n=O(v_n)$ when
$|u_n|\le C v_n$ eventually, with the same convention on dependencies;
$o(1)$ denotes a quantity tending to zero.

Differences of circle maps are taken in compatible degree-one lifts.
Flow lifts start at the identity, cumulative lifts are defined by
composition, and conjugated lifts use $\widetilde R_\alpha(x)=x+\alpha$.
For homeomorphisms, area preservation means preservation of Lebesgue
measure on $\D$.

\begin{definition}\label{def:pseudorotation}
Let $F:\D\to\D$ be an orientation- and area-preserving homeomorphism.  We call $F$ an \emph{irrational pseudo-rotation} if
\[
 F(0)=0,
 \qquad
 \Per(F):=\{z\in\D:F^n(z)=z\text{ for some }n\ge1\}=\{0\}.
\]
To define its rotation number, set $f=F|_{\partial\D}$, let
$\pi:\R\to\partial\D$ be given by $\pi(t)=e^{2\pi i t}$, and choose a lift
$\widetilde f:\R\to\R$ satisfying
$\pi\circ\widetilde f=f\circ\pi$ and
$\widetilde f(t+1)=\widetilde f(t)+1$.  This lift is an increasing
homeomorphism.  Then
\[
 \rot(F):=
 \left[
 \lim_{n\to\infty}
 \frac{\widetilde f^{\,n}(t)-t}{n}
 \right]\in\T
\]
is independent of $t$ and of the chosen lift modulo $\Z$.  Since
$\Per(F)=\{0\}$, the boundary map $f$ has no periodic point; hence
$\rot(F)\in\T\setminus\mathbb Q$.
\end{definition}

For a disk pseudo-rotation, this boundary rotation number agrees with the unique rotation number on $\D\setminus\{0\}$; see, for example, the discussion and references in \cite{AFLXZ,BramhamZhangSurvey}.  For the maps constructed below, the common value can be read directly from every invariant circle and from the boundary.

\begin{definition}\label{def:c0rigid}
A homeomorphism $F$ of a compact metric space is \emph{$C^0$-rigid} if there exists a sequence $n_j\to\infty$ such that $F^{n_j}\to\id$ uniformly.
\end{definition}

A $C^1$ pseudo-rotation is understood to be a diffeomorphism whose map
and inverse are $C^1$ up to the boundary.  For $0<\beta\le1$, a
diffeomorphism of the closed disk is said to be $C^{1,\beta}$ if both the map and its inverse are $C^1$ up to the boundary and their first derivatives are $\beta$-H\"older on the closed disk.  For $0<\beta<1$ this is the regularity class often denoted $C^{1+\beta}$.

\Needspace{8\baselineskip}
For a $C^1$ map $u:\D\to\R^2$ whose derivative is
$\beta$-H\"older, we use the norm
\[
\|u\|_{C^{1,\beta}(\D)}
:=
\sup_{z\in\D}|u(z)|
+
\sup_{z\in\D}\|Du(z)\|
+
[Du]_{C^{0,\beta}(\D)},
\]
where
\[
[Du]_{C^{0,\beta}(\D)}
:=
\sup_{\substack{z,w\in\D\\ z\neq w}}
\frac{\|Du(z)-Du(w)\|}{|z-w|^\beta}.
\]
Here $|\cdot|$ denotes the Euclidean norm on $\R^2$ and
$\|\cdot\|$ the induced operator norm on linear maps.  We also write
\[
\|u\|_{C^1(\D)}
:=\sup_{z\in\D}|u(z)|+\sup_{z\in\D}\|Du(z)\|.
\]

The batching of successive squeezing scales is measured by the Brjuno sum.
For positive H\"older exponents, the same construction leads to the following
weighted refinement.

\begin{definition}\label{def:arith-classes}
Let $p_n/q_n$ be the continued-fraction convergents of an irrational $\alpha$.  We set
\[
 B(\alpha):=\sum_{n\ge0}\frac{\log q_{n+1}}{q_n},
 \qquad
 \Bclass:=\{\alpha\in\T\setminus\mathbb Q:B(\alpha)<\infty\}.
\]
Thus $\Bclass$ is the Brjuno class.  For $0<\beta<1$, we also set
\begin{equation}\label{eq:weighted-Brjuno-class}
 B_\beta(\alpha)
 :=\sum_{n\ge0}
 \frac{q_{n+1}^{\beta}(1+\log q_{n+1})^{2\beta}}{q_n},
 \qquad
  \Bclassbeta{\beta}:=\{\alpha\in\T\setminus\mathbb Q:
  B_\beta(\alpha)<\infty\}.
\end{equation}
\end{definition}

\Needspace{20\baselineskip}
\subsection{Main results}

The first result gives the global regularity ranges of the construction.

\begin{theorem}\label{thm:main}
Let $\alpha\in\Bclass$.
\begin{enumerate}[label=\textnormal{(\roman*)}]
\item There exists an orientation- and
area-preserving $C^1$ irrational pseudo-rotation
$F_\alpha:\D\to\D$ which is not $C^0$-rigid.
\item Fix $0<\beta<1$.  If $\alpha\in\Bclassbeta{\beta}$, then the
same conclusion holds with $F_\alpha$ of class $C^{1,\beta}$.
\end{enumerate}
In both cases $\Per(F_\alpha)=\{0\}$, the boundary restriction is a genuine Denjoy diffeomorphism of rotation number $\alpha$, and $\operatorname{int}(\D)\setminus\{0\}$ is foliated by smooth invariant circles on which the dynamics is smoothly conjugate to $R_\alpha$.
\end{theorem}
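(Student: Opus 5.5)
The plan is to build $F_\alpha$ in polar-type coordinates $(r,t)\in(0,1]\times\T$ as a map preserving every circle $\{r=\mathrm{const}\}$. On the circle of radius $r$ we put $f_r=h_r\circ R_\alpha\circ h_r^{-1}$, where $h_r$ is a smooth circle diffeomorphism for $r<1$, and we let $h_r$ degenerate as $r\to1$ to a non-invertible monotone semiconjugacy. Then $f_1$ is a Denjoy counterexample with rotation number $\alpha$.

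First we need the boundary map. We use a $C^1$ Denjoy diffeomorphism $f_1$ with rotation number $\alpha$. For $\alpha\in\Bclass$ it should be possible to make $f_1$ arbitrarily $C^1$-close to $R_\alpha$, and for $\alpha\in\Bclassbeta{\beta}$ to make it $C^{1,\beta}$ (we only need $C^{1,\beta}$, which lies below the Denjoy threshold $C^{1+\mathrm{bv}}$).

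The wandering intervals are inserted along one orbit with lengths $\ell_n$, and the construction is organised by continued-fraction scales. The intervals of the orbit between times $q_n$ and $q_{n+1}$ are "squeezed" at scale $q_n^{-1}$. The derivative distortion at that scale costs roughly $\log q_{n+1}/q_n$ in $C^1$, and $q_{n+1}^\beta(\log q_{n+1})^{2\beta}/q_n$ in the $\beta$-H\"older seminorm. Summability of these costs is exactly $B(\alpha)<\infty$, respectively $B_\beta(\alpha)<\infty$. This is what makes the smoothness estimate work.

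The interpolation is the key step. We realise $f_1$ as a limit $f_1=\lim_N H_N R_\alpha H_N^{-1}$, where each $H_N$ is a smooth diffeomorphism obtained by inserting only the first $N$ scales of intervals, each with a smoothly growing length. Then we set $h_r=H_{N(r)}$ with a smooth reparametrisation in $r$, so that the intervals are "switched on" gradually as $r\uparrow1$. Each $f_r$ with $r<1$ is smoothly conjugate to $R_\alpha$, so it has no periodic points. The boundary also has none, since $\alpha\notin\mathbb Q$. Hence $\Per(F_\alpha)=\{0\}$. Near $r=0$ we take $h_r=\id$, so $F_\alpha$ equals $\mathcal R_\alpha$ there and is smooth at the origin.

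Area preservation is handled with action--angle type coordinates. The map $(r,t)\mapsto(r,f_r(t))$ preserves $d(r^2/2)\wedge dt$ only if each $f_r$ is a rotation, so we correct it. Since $f_r$ preserves the probability measure $\mu_r=(h_r)_*\mathrm{Leb}$, we change coordinates by $\Phi(r,t)=(\rho(r,t),t)$, chosen so that the pushforward of area restricted to each circle has density proportional to $\mu_r$. Equivalently, we conjugate by a fibered map, which gives a symplectic twist-free fibered map. The estimates on $\partial_r h_r$ must again be controlled by the same Brjuno-type sums. To get $C^1$ (resp.\ $C^{1,\beta}$) regularity up to the boundary, we need the $r$-derivatives to go to zero with the scale. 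We arrange this by spending radial width $\asymp$ the $n$-th term of the relevant sum on the $n$-th scale.

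Non-rigidity then follows from the boundary. If $F^{n_j}\to\id$ uniformly, then $f_1^{n_j}\to\id$. But a Denjoy map is not $C^0$-rigid: a wandering interval $I$ satisfies $f_1^{n}(I)\cap I=\emptyset$ for all $n\ne0$, and the images of $I$ are disjoint intervals whose lengths tend to $0$. So $f_1^{n_j}$ maps $I$ to an interval of length tending to $0$, which contradicts uniform convergence to the identity on $I$.

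Closeness to the rigid rotation comes from taking the first inserted scale late, starting at a large $n_0$. I expect the main obstacle to be the joint $C^{1,\beta}$ estimate in both variables $(r,t)$ after the area correction. In particular, the H\"older seminorm across different scales must be bounded by $B_\beta$, with the $(\log q_{n+1})^{2\beta}$ loss coming from the radial transition.
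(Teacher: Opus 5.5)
Your endgame is fine and matches the paper's: conjugacy to $R_\alpha$ on every interior circle gives $\Per(F_\alpha)=\{0\}$, and the shrinking images of a boundary wandering interval rule out $C^0$-rigidity. The gap is regularity of the area-preserving extension up to the boundary, which is the real content of the theorem. The heuristics you give for it point to the wrong place.

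The Brjuno cost cannot come from derivative distortion of the boundary map. $C^{1,\beta}$ Denjoy diffeomorphisms, and $C^1$ ones as close to $R_\alpha$ as you like, exist for every irrational $\alpha$. By contrast, for super-Liouville $\alpha$ no Lipschitz area-preserving disk pseudo-rotation can have a Denjoy boundary restriction at all (Theorem~2 of Avila--Fayad--Le Calvez--Xu--Zhang). So the arithmetic must enter through the finite approximations and the normal direction, and your outline does not say how.

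Concretely, a map $H_NR_\alpha H_N^{-1}$ obtained by inserting a finite orbit segment of intervals is not $C^1$-close to $f_1$. At the last inserted interval it must compress an interval of length $\ell$ to roughly its rotation-coordinate size, that is, by the full blow-up ratio. Growing the lengths smoothly in $r$ does not help, since at each fixed $r<1$ the segment is still finite. The insertion must be tapered along the orbit. In the paper this is the moving front: the squeeze parameter $\sigma_{m,j}$ changes by $d_{m,r}^{-1}$ between consecutive tower levels. Hence $\mathcal M_m^{-1}\mathcal R_\alpha\mathcal M_m$ differs from $\mathcal R_\alpha$ on each active rectangle by a single Hamiltonian flow of time $\pm d_{m,r}^{-1}$.

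The length of the taper is limited by disjointness of the tower levels. A tower of height $\ge q_j$ contains two levels at distance $\normT{q_j\alpha}<1/q_{j+1}$, so it needs support below $1/q_{j+1}$. Passing from support about $1/q_j$ to about $1/q_{j+1}$ takes roughly $\log_2(q_{j+1}/q_j)$ dyadic halvings, spread over $D_m\approx\vartheta q_j$ levels. The Brjuno term is this ratio $w_m/D_m$: a $C^1$ cost per stage, not a radial width.

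The normal direction is also not handled. In the paper each halving at angular scale $\delta_r$ is a cotangent lift $(x,s)\mapsto(L(x),s/L'(x))$. Its derivative contains $-sL''/L'^2$, of size $s/\delta_r$, so it is confined to a collar of width $\delta_r$, and stage $m$ lives in $s\le\delta_{a_m}$. Allotting radial width $\asymp\log q_{n+1}/q_n$ to the $n$-th scale decouples radial extent from angular scale. You give no estimate that the resulting terms, height times second derivative of the circle maps, stay bounded.

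Moreover, your correction $\Phi$ degenerates at $r=1$. Area preservation forces $\partial_r\rho(r,\cdot)$ to be proportional to the density of $\mu_r$, and $\mu_r(I)\to0$ for every wandering interval $I$. So $D\Phi^{-1}$ is unbounded near the boundary, and the regularity of $\Phi G\Phi^{-1}$ cannot be read off from bounds on $\Phi$ and $G$.

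The paper never estimates the conjugacies globally; they do not converge near the boundary. Instead it bounds $F_{m+1}-F_m=C_m^{-1}\mathcal S_mC_m-C_m^{-1}\mathcal R_\alpha C_m$ only on physical preimages of active rectangles. There the protected-core argument guarantees $\rho_m\ge1$, so $C_m^{\pm1}$ has controlled derivatives exactly where it is used. Summing $\|F_{m+1}-F_m\|_{C^1}\lesssim\max_r d_{m,r}^{-1}$, together with the interpolated H\"older bounds $\max_r\delta_r^{-\beta}/d_{m,r}$, yields the $C^1$ and $C^{1,\beta}$ limits.

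You need substitutes for the tapered front, the scale-matched collars, tower separation, and this density lower bound. Without them your outline does not show that $F_\alpha$ is $C^1$, let alone $C^{1,\beta}$.
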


\begin{remark}\label{rem:rotation-range}
The hypotheses are nested: $\Bclassbeta{\beta}\subset\Bclass$, since
$\log x\le C_\beta x^\beta$ for $x\ge2$.  The inclusion is strict, and
one may see this by choosing the partial quotients so that
$q_{n+1}\asymp q_n^{1/\beta}$.  The positive H\"older condition also has a
genuine critical boundary.  If
\[
 \nu(\alpha):=\limsup_{n\to\infty}
 \frac{\log q_{n+1}}{\log q_n}<\frac1\beta,
\]
then $\alpha\in\Bclassbeta{\beta}$, as proved in
Lemma~\ref{lem:nu-growth}\textnormal{(i)}.  The limsup is taken over sufficiently
large indices, for which $q_n>1$.  The same membership can still hold
when $\nu(\alpha)=1/\beta$; for example, one may choose the continued-fraction
coefficients so that
\[
 q_{n+1}\asymp q_n^{1/\beta}
 \exp\!\bigl(- (\log q_n)^{3/4}\bigr).
\]
On the other hand, $B_\beta(\alpha)<\infty$ forces
$q_{n+1}^{\beta}/q_n\to0$ and hence $\nu(\alpha)\le1/\beta$.
Moreover, Lemma~\ref{lem:nu-growth}\textnormal{(ii)} gives
$\nu(\alpha)=1$ for Lebesgue almost every irrational $\alpha$.
Since $1<1/\beta$ for every $0<\beta<1$, the same full-measure set lies in
$\Bclassbeta{\beta}$ for every such $\beta$, giving the arithmetic input for
Corollary~\ref{cor:local-nonrigidity-nu} below.
The Brjuno class itself is much larger than the finite-type classes and
contains Liouville numbers; an explicit example is given in
Remark~\ref{rem:brjuno-liouville} below.
\end{remark}

Two high-regularity conjugacy results provide a comparison for the local
non-rigidity theorem below.

In the smooth category, a consequence of Herman's last geometric
theorem gives, for each fixed Diophantine class, a suitable finite
integer $s$ with the following property.  Let $\alpha$ belong to that
class, and let $G$ be a $C^\infty$ diffeomorphism of the closed annulus
that is isotopic to the identity, preserves both boundary components,
has rotation number $\alpha$ on one boundary component, satisfies the
intersection property, and has only finitely many periodic points in
the open annulus.  If $G$ is sufficiently $C^s$-close to
$S_\alpha(\theta,r)=(\theta+\alpha,r)$, then it is
$C^\infty$-conjugate to $S_\alpha$
\cite[Corollary~1]{FayadKrikorianHerman}.

In the real-analytic category, the relevant Brjuno theorem is due to
R\"ussmann.  As recorded in \cite[p.~678, \S1.3]{AFLXZ}, R\"ussmann's
analytic Brjuno result \cite{Russmann}, combined with the globalization
argument used in \cite[Corollary~1 and its proof]{FayadKrikorianHerman},
implies that, for a Brjuno rotation number $\alpha$ and a fixed analyticity
width, every sufficiently close real-analytic pseudo-rotation of rotation
number $\alpha$ is analytically conjugate to $\mathcal R_\alpha$, and hence
is $C^\omega$-rigid.

In contrast, Theorem~\ref{thm:local-nonrigidity} gives non-$C^0$-rigid
examples arbitrarily close to the rotation in $C^1$ or $C^{1,\beta}$,
with the admissible rotation number fixed.  Since
$\Bclassbeta{\beta}\subset\Bclass$, this contrast occurs within the
Brjuno range.  These examples are not topologically conjugate to the
rotation on the closed disk.

\Needspace{18\baselineskip}
\begin{theorem}\label{thm:local-nonrigidity}
Let $\alpha\in\Bclass$.
\begin{enumerate}[label=\textnormal{(\roman*)}]
\item For every $\varepsilon>0$ there exists an
area-preserving $C^1$ irrational pseudo-rotation $F$ of $\D$ such that
\[
 \rot(F)=\alpha,
 \qquad
 \|F-\mathcal R_\alpha\|_{C^1(\D)}<\varepsilon,
\]
while $F$ is not $C^0$-rigid.
\item Fix $0<\beta<1$.  If $\alpha\in\Bclassbeta{\beta}$, then for every
$\varepsilon>0$ there exists an area-preserving $C^{1,\beta}$ irrational
pseudo-rotation $F$ of $\D$ such that
\[
 \rot(F)=\alpha,
 \qquad
 \|F-\mathcal R_\alpha\|_{C^{1,\beta}(\D)}<\varepsilon,
\]
while $F$ is not $C^0$-rigid.
\end{enumerate}
\end{theorem}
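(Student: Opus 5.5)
Theorem~\ref{thm:local-nonrigidity} is to be derived from the construction behind Theorem~\ref{thm:main}, by rescaling and by placing the non-rigid part arbitrarily close to the boundary. The structure is as in Theorem~\ref{thm:main}: $\operatorname{int}(\D)\setminus\{0\}$ is foliated by invariant circles $\{r=\rho\}$ carrying maps smoothly conjugate to $R_\alpha$, and the boundary carries a Denjoy diffeomorphism. The plan is to write $F$ in polar-type action-angle coordinates $(\theta,s)$, with $s=r^2/2$ so that area is $d\theta\wedge ds$. There we take $F(\theta,s)=(h_s^{-1}(h_s(\theta)+\alpha),\,\sigma_s(\theta))$, where $\sigma_s$ is forced by area preservation. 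Here $h_s$ is a family of smooth circle diffeomorphisms, $h_s=\id$ for $s\le s_0$ (so $F=\mathcal R_\alpha$ near $0$ and smoothness at the origin is automatic). As $s\to 1/2$, $h_s$ degenerates to the semiconjugacy of a Denjoy diffeomorphism $f$ with $\rot(f)=\alpha$.

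\textbf{Step 1 (boundary Denjoy map close to $R_\alpha$).} We choose the Denjoy diffeomorphism $f$ with wandering intervals of lengths $\ell_n$, $\sum\ell_n$ small. The Brjuno condition (resp.\ $B_\beta(\alpha)<\infty$) is exactly what makes the standard Denjoy construction produce $f$ with $\|f-R_\alpha\|_{C^1}$ (resp.\ $C^{1,\beta}$) finite. Multiplying all $\ell_n$ by a small factor $\eta$ makes $f-R_\alpha$ as small as we like in that norm. I expect the same estimates proved for Theorem~\ref{thm:main} to give a bound linear in $\eta$, since the perturbation is a sum of bumps whose amplitudes scale with $\eta$ while the scale data, which depend only on $q_n$, are unchanged.

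\textbf{Step 2 (interpolating family).} Next we build $h_s$ by the scheme of Theorem~\ref{thm:main}: the squeezing scales are switched on successively, batched along the convergents $q_n$, as $s\uparrow 1/2$. Then $F$ restricted to each circle is smoothly conjugate to $R_\alpha$, so it has no periodic points there. The boundary map is $f$, so $\Per(F)=\{0\}$ and $\rot(F)=\alpha$. Non-$C^0$-rigidity is inherited from the boundary: a Denjoy homeomorphism is not rigid, because $f^{n}$ maps a wandering interval $I$ to $f^n(I)$, disjoint from $I$, so $f^{n}\not\to\id$ uniformly on $I$. Thus $F^{n_j}\not\to\id$ uniformly on $\partial\D$.

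\textbf{Step 3 (smallness, the main obstacle).} We must bound $\|F-\mathcal R_\alpha\|$ in $C^1$, or in $C^{1,\beta}$, uniformly in $s$, including the radial derivatives $\partial_s h_s$ and the area-correction term $\sigma_s-s$. My first attempt is to insert the factor $\eta$ from Step 1 into every bump. Each term in the derivative estimates of Theorem~\ref{thm:main} is then multiplied by $\eta$ or by $\eta$ times a bounded factor, while the transition in $s$ is kept on a fixed interval $[s_0,1/2]$. The danger is that the radial transition speed, which is dictated by how quickly the scales $1/q_n$ must be switched on, may blow up independently of $\eta$. I would therefore reuse the radial parametrization from the proof of Theorem~\ref{thm:main} unchanged and verify that the radial derivatives are homogeneous of degree one in the amplitudes. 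In the H\"older case the seminorm $[D(F-\mathcal R_\alpha)]_{C^{0,\beta}}$ is likewise homogeneous in the amplitude, which gives $<\varepsilon$ for small $\eta$.

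If this homogeneity fails, the fallback is a time change in $s$: we slow the transition to occupy a larger fraction of the disk, which trades radial derivative size against the fixed bound $B(\alpha)$ or $B_\beta(\alpha)$.
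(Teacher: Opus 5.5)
\textbf{There is a genuine gap in Step~3, and it is the step that carries the theorem.} You rely on the claim that shrinking all wandering-interval lengths (equivalently, the squeezing sizes) by a factor $\eta$ makes the $C^1$ distance small. This is false, because first derivatives only see ratios of lengths.

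This already fails on the boundary circle. A Denjoy map $f$ sends $I_n$ onto $I_{n+1}$, so by the mean value theorem $f'=\ell_{n+1}/\ell_n$ somewhere on $I_n$. Hence $\|f-R_\alpha\|_{C^1}\ge\sup_n|\ell_{n+1}/\ell_n-1|$, and this bound is invariant under $\ell_n\mapsto\eta\ell_n$. The perturbation is not a sum of fixed-width bumps with amplitude $\eta$: its width on $I_n$ is $\ell_n$ itself.

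In the paper the same fact appears in the ledger. The bound $\|D\Delta_m\|_{C^0}\le C_{\Delta,1}/d_{m,r}$ in \eqref{eq:Delta1-master} contains no $\delta_r$: a halving squeeze has a fixed $C^1$ price, which can only be diluted by spreading it over many tower edges. The H\"older cost $\delta_r^{-\beta}/d_{m,r}$ in \eqref{eq:holder-macro-stage} even grows as the spatial scale shrinks, since $[\phi'(\cdot/\delta)]_{C^{0,\beta}}=\delta^{-\beta}[\phi']_{C^{0,\beta}}$.

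Weakening the contraction per bump does not help either. More scales are then needed to reach the collapse $P_{a_m}\to P_{b_m}$ that the next tower's separation requires, so the cost stays roughly (total logarithmic contraction)/(available levels). Your fallback, a time change in $s$, cannot reach this obstruction. It already sits in the tangential derivative of the boundary map $F|_{\partial\D}$, which no radial reparametrization changes.

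\textbf{The missing idea is that smallness must come from the arithmetic tail.} The paper runs the unchanged construction starting at a late checkpoint $\mathfrak k_J$. Since the initial map is $\mathcal R_\alpha$, the difference $F-\mathcal R_\alpha=\sum_m\Delta_m$ telescopes. Every ledger constant is uniform in $m$, $r$, the tower data and the starting scale $K$. So Proposition~\ref{prop:brjuno-batched-schedule} and Corollary~\ref{cor:holder-stage} give
$\|F-\mathcal R_\alpha\|_{C^1(\D)}\le C\sum_{j\ge J}(1+\log q_{j+1})/q_j$, and the analogous weighted tail bound in $C^{1,\beta}$. These tails tend to zero precisely because $B(\alpha)<\infty$, respectively $B_\beta(\alpha)<\infty$.

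Starting late means each block of $w_m=O(1+\log q_{j+1})$ halvings is spread over $D_m\asymp\vartheta q_j$ tower levels. The one-dimensional analogue would be to take lengths such as $(|n|+N)^{-2}$ with $N$ large, whose consecutive ratios tend to $1$ uniformly. Scaling them by a constant does not achieve this.

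\textbf{Two further points in your setup are off.}
\begin{enumerate}
\item The Brjuno condition is not what makes a one-dimensional Denjoy construction $C^1$. $C^{1,\beta}$ Denjoy diffeomorphisms exist for every irrational $\alpha$ and every $\beta<1$. The arithmetic enters only through the area-preserving extension, via the level budget $D_m$ versus the scale count $w_m$.
\item Invariant round circles $\{r=\rho\}$ are incompatible with area preservation unless the circle maps are rotations, since $(\theta,s)\mapsto(\phi_s(\theta),s)$ has Jacobian $\partial_\theta\phi_s$. The paper's invariant circles are the non-round curves $\mathcal H_\infty^{-1}(\T\times\{s\})$.
\end{enumerate}
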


In particular, we have the following local consequence in terms of $\nu(\alpha)$.

\begin{corollary}\label{cor:local-nonrigidity-nu}
Let $\alpha\in\T\setminus\mathbb Q$ and fix $0<\beta<1$.  If $\nu(\alpha)<1/\beta$, then the conclusion of
Theorem~\ref{thm:local-nonrigidity}\textnormal{(ii)} holds.  In particular,
the common full-measure set
$E:=\{\alpha\in\T\setminus\mathbb Q:\nu(\alpha)=1\}$ has the following
property: for every $\alpha\in E$, the conclusion holds for every
$0<\beta<1$.
\end{corollary}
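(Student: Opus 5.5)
The corollary is essentially arithmetic: it reduces to Theorem~\ref{thm:local-nonrigidity}\textnormal{(ii)} once we know that the hypothesis $\nu(\alpha)<1/\beta$ places $\alpha$ in $\Bclassbeta{\beta}$. We rely on Lemma~\ref{lem:nu-growth}, referenced in Remark~\ref{rem:rotation-range}. Part (i) of that lemma gives the inclusion $\{\nu(\alpha)<1/\beta\}\subset\Bclassbeta{\beta}$. Since $\Bclassbeta{\beta}\subset\Bclass$ (also noted in Remark~\ref{rem:rotation-range}), the hypothesis $\alpha\in\Bclass$ of Theorem~\ref{thm:local-nonrigidity} holds automatically. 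Part (ii) of that theorem then yields, for every $\varepsilon>0$, an area-preserving $C^{1,\beta}$ irrational pseudo-rotation $F$ that has rotation number $\alpha$, satisfies $\|F-\mathcal R_\alpha\|_{C^{1,\beta}(\D)}<\varepsilon$, and is not $C^0$-rigid.

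For completeness, we sketch why Lemma~\ref{lem:nu-growth}\textnormal{(i)} holds. Choose $\nu(\alpha)<\gamma<1/\beta$. For all large $n$ we then have $q_{n+1}\le q_n^{\gamma}$, so
\[
q_{n+1}^{\beta}(1+\log q_{n+1})^{2\beta}/q_n\le q_n^{\beta\gamma-1}(1+\gamma\log q_n)^{2\beta}.
\]
Because $\beta\gamma<1$ and $q_n$ grows at least geometrically (indeed $q_{n+2}\ge 2q_n$), the right-hand side is summable, and so $B_\beta(\alpha)<\infty$.

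For the second statement, let $E=\{\nu=1\}$. Lemma~\ref{lem:nu-growth}\textnormal{(ii)} says that $E$ has full Lebesgue measure among irrationals; this follows from Khintchine--Lévy type estimates, since for a.e.\ $\alpha$ we have $\log q_n\sim cn$ and $\log a_{n+1}=o(n)$. For $\alpha\in E$ and any $0<\beta<1$ we have $\nu(\alpha)=1<1/\beta$, so the first part applies. Hence the conclusion holds for every $\beta\in(0,1)$ simultaneously, and the set $E$ does not depend on $\beta$.

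There is no real obstacle: all the analytic content sits in Theorem~\ref{thm:local-nonrigidity}. The only point requiring care is that the limsup defining $\nu$ is taken over indices with $q_n>1$. The finitely many early terms of $B_\beta(\alpha)$ are finite, so they do not affect convergence.
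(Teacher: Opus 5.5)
Your proposal is correct and matches the paper's proof: Lemma~\ref{lem:nu-growth}\textnormal{(i)} gives $\alpha\in\Bclassbeta{\beta}\subset\Bclass$, Theorem~\ref{thm:local-nonrigidity}\textnormal{(ii)} then applies, and Lemma~\ref{lem:nu-growth}\textnormal{(ii)} together with $1<1/\beta$ handles the full-measure set $E$. Your sketch of part (i) of the lemma is the paper's argument; for part (ii) you invoke L\'evy's theorem ($\log q_n\sim cn$ almost everywhere, which already forces $\log q_{n+1}/\log q_n\to1$) in place of the paper's Borel--Cantelli argument with $\normT{k\alpha}<k^{-1-\gamma}$, and either justification works.
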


\begin{remark}\label{rem:kam-comparison}
By the corollary above, the $C^{1,\beta}$ analogue of the semi-local rigidity questions
\cite[Question~5]{AFLXZ} and \cite[Question~14]{FayadKrikorian} fails
even when the neighborhood is allowed to depend on $\alpha$.
Their original analytic versions concern a fixed analyticity strip
and are not settled here.  In the analytic category,
Avila--Fayad--Le Calvez--Xu--Zhang prove that, for every irrational
$\alpha$ and every fixed analyticity width $\delta>0$, there is
$\varepsilon(\delta,\alpha)>0$ such that a real-analytic pseudo-rotation
$f$ with $\rot(f)=\alpha$ and
$|f-\mathcal R_\alpha|_\delta\le\varepsilon(\delta,\alpha)$ is
$C^\infty$-rigid \cite[Corollary~A]{AFLXZ}.  This covers all irrational
rotation numbers, whereas the Brjuno statement recalled above has the
stronger conclusion of analytic conjugacy.
\end{remark}

\begin{remark}\label{rem:denjoy-boundary-obstruction}
For every $0<\beta<1$, the one-dimensional Denjoy theory provides a useful comparison: there are $C^{1,\beta}$ Denjoy
diffeomorphisms with any prescribed irrational rotation number, and their
wandering intervals preclude $C^0$-rigidity \cite{Herman}.  The disk setting,
however, has a genuinely two-dimensional obstruction.  If $\alpha$ is
super-Liouville in the sense that
\[
 \limsup_{n\to\infty}\frac{\log q_{n+1}}{q_n}=+\infty,
\]
then Theorem~2 of Avila--Fayad--Le Calvez--Xu--Zhang implies that every
Lipschitz disk pseudo-rotation of rotation number $\alpha$ is $C^0$-rigid
\cite[Theorem~2]{AFLXZ}.  Hence such a Denjoy diffeomorphism cannot occur
as the boundary restriction of a $C^1$ area-preserving disk pseudo-rotation:
rigidity would restrict to the boundary, contradicting its wandering
interval.  Thus, in the disk problem, area preservation, arithmetic, and
regularity act jointly; our conservative realization is not a formal lift of
the one-dimensional Denjoy construction.  This is consistent with
Theorem~\ref{thm:main}, since the Brjuno condition implies
$\log q_{n+1}/q_n\to0$.

\end{remark}

\subsection{A derivative-growth consequence and an open problem}\label{sec:consequences}

The following $C^1$ consequence concerns the derivative-growth problem
raised in \cite[Question~4]{AFLXZ} and
\cite[Question~15]{FayadKrikorian}, whose original smooth versions remain
unresolved by this construction.
For an iterate, we write
$\|DF^m\|_{C^0(\D)}=\sup_{z\in\D}\|D(F^m)(z)\|$.

\Needspace{10\baselineskip}
\begin{corollary}\label{cor:superpolynomial-growth}
There exist a Brjuno--Liouville irrational number $\alpha$ and an
area-preserving $C^1$ irrational pseudo-rotation $F:\D\to\D$ with
$\rot(F)=\alpha$ such that $F$ is not $C^0$-rigid and, for every $A>0$,
\[
 \limsup_{m\to\infty}
 \frac{\|DF^m\|_{C^0(\D)}}{m^A}=+\infty.
\]
\end{corollary}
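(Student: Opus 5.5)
The proof uses Theorem~\ref{thm:main}(i) with a specially chosen Brjuno--Liouville rotation number, combined with a lower bound on $\|DF^m\|$ coming from the non-rigid boundary dynamics. Since $F_\alpha$ restricted to $\partial\D$ is a $C^1$ Denjoy diffeomorphism $f$ with rotation number $\alpha$, we have $\|DF^m\|_{C^0(\D)}\ge\sup_{\partial\D}|(f^m)'|$. It therefore suffices to force superpolynomial growth of $(f^{q_n})'$ along a subsequence of convergent denominators.

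\textbf{Choice of $\alpha$.} We choose partial quotients with $q_{n+1}$ enormous relative to $q_n$ along a sparse subsequence $n_k$, for instance $\log q_{n_k+1}\asymp q_{n_k}/k^2$, with bounded quotients in between. Then $B(\alpha)<\infty$. At the same time $q_{n_k+1}\ge e^{q_{n_k}/k^2}$ exceeds every polynomial in $q_{n_k}$, so $\alpha$ is Liouville. This is the example referred to in Remark~\ref{rem:brjuno-liouville}.

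\textbf{Derivative growth.} Let $I$ be a wandering interval of $f$ with orbit $I_j=f^j(I)$. By Denjoy theory, $\sum_j|I_j|\le1$. Take $m=q_{n_k+1}$ or a comparable iterate. The mean value theorem gives a point $x\in I_j$ with $(f^{m})'(x)=|I_{j+m}|/|I_j|$. So it suffices that, for some $j$, the ratio $|I_{j+m}|/|I_j|$ exceeds $m^{A}$ for every $A$ once $k$ is large.

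The construction behind Theorem~\ref{thm:main} fixes the lengths $\ell_j=|I_j|$, which are presumably squeezing profiles tied to the scales $q_n$. I would exploit the freedom in these lengths as follows. Between times $q_{n_k}$ and $q_{n_k+1}$, prescribe a decay of $\ell_j$ so steep that some ratio $\ell_{j}/\ell_{j+m}$ is superpolynomial in $m$. The inverse then has large derivative, and $\|D(F^{-1})^m\|=\|DF^{-m}\|$ is large. To get the forward statement we either reverse the profile, making the lengths increase after a deep trough, or compose with the reflection symmetry of the construction.

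\textbf{Main obstacle.} The chosen superpolynomial length profile must remain compatible with the $C^1$ estimates of the construction, which require $\ell_{j+1}/\ell_j\to1$. This is consistent: a ratio of $\exp(\sqrt m)$ over $m$ steps is still sub-exponential, so consecutive ratios tend to $1$. But this must be checked against the Brjuno-batched squeezing estimates. If the construction's lengths cannot be tuned directly, the fallback is a pigeonhole argument. Since $\sum\ell_j<\infty$ with $\ell_j$ summable, some window must see $\ell$ drop by a large factor. Liouville closeness, $\normT{q_{n_k}\alpha}<e^{-q_{n_k}/k^2}$, forces $I_{j+q_{n_k}}$ to lie within $e^{-q_{n_k}/k^2}$ of $I_j$. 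Near-returns of this kind, together with the bounded distortion that fails only slowly in $C^1$, should yield derivative ratios superpolynomial in $m=q_{n_k}$ at the Liouville times. This would give the stated $\limsup=+\infty$ for every $A>0$.
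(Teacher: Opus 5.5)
\textbf{The gap.} Your reduction to the boundary is sound: $\|DF^m\|_{C^0(\D)}\ge\sup|(f^m)'|$, and the mean value theorem applies along a wandering orbit. Your $\alpha$ is indeed Brjuno and Liouville; it is not the example of Remark~\ref{rem:brjuno-liouville}, but that is harmless. What is never established is the one step that carries the corollary: a lower bound for $\|DF^m\|$ that beats every $m^A$.

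Your primary plan is to redesign the construction so that the lengths $\ell_j=|I_j|$ follow a prescribed steep profile. But these lengths are outputs of the construction, namely $|h_m^{-1}(R_\alpha^jP_{a_m})|$. You give neither a mechanism for prescribing them nor a check against the protected-density and ledger estimates, so as written this is a hope, not a step.

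The fallback fails as stated, for three reasons.
\begin{itemize}
\item Summability $\sum_j\ell_j<\infty$ only gives $\liminf_m m\,\ell_{-m}=0$. That yields $\limsup_m\|DF^m\|/m^A=\infty$ for $A\le1$ and nothing more, since $\ell_j\asymp1/(|j|\log^2|j|)$ is summable.
\item The Liouville near-return $\normT{q_n\alpha}\ll1$ holds in rotation coordinates, where the semiconjugacy collapses every wandering interval. It therefore gives no physical proximity of $I_j$ and $I_{j+q_n}$. Moreover, at $C^1$ regularity there is no distortion control that would turn proximity into a derivative ratio.
\item $m=q_{n_k+1}$ is the wrong iterate. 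It would require a length drop superpolynomial in the large denominator, whereas the Liouville gap can only be exploited at iterates of size about $q_{n_k}$.
\end{itemize}

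\textbf{How the paper proceeds.} The paper needs no length information at all. It fixes $F$ from Theorem~\ref{thm:main}(i) for the $\alpha$ of Remark~\ref{rem:brjuno-liouville} and argues by contradiction. Suppose $\|DF^m\|_{C^0(\D)}\le C_Am^A$. The displacement estimate of \cite[Corollary~B]{AFLXZ},
\[
\|F^{q_n}-\id\|_{C^0}\le C\bigl(1+\|DF^{q_n}\|_{C^0}\bigr)\sqrt{\normT{q_n\alpha}},
\]
then gives $\|F^{q_n}-\id\|_{C^0}\lesssim q_n^A/\sqrt{q_{n+1}}\to0$ along convergents. This contradicts non-rigidity.

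\textbf{Completing your direct route.} Your approach can also be made to work, by reading the lengths off the existing construction instead of tuning them.
\begin{itemize}
\item Write $I_{-T_m}=h_m^{-1}M_m^{-1}(R_\alpha^{-T_m}P_{b_m})$.
\item On level $-T_m$ one has $\sigma_{m,-T_m}=1/d_{m,b_m-1}$. So $M_m^{-1}$ acts there as a rotation-conjugate of $L_{b_m-1,-1/d_{m,b_m-1}}$, whose derivative is at most $e^3$.
\item By tower separation and $T_m>N_m$, this level is disjoint from $\mathcal P_m$. Hence $(h_m^{-1})'=1/\rho_m\le1$ on it.
\item Therefore $|I_{-T_m}|\le4e^3\delta_{b_m}$, while $|I_0|=4\delta_{a_0}$ and $g^{T_m}(I_{-T_m})=I_0$.
\item This gives $\|DF^{T_m}\|_{C^0(\D)}\ge e^{-3}2^{b_m-a_0}\ge c\,q_{j(m)+1}$, against $T_m\le4\vartheta q_{j(m)}$. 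No reversal or reflection is needed.
\end{itemize}
Superpolynomial growth of $q_{j+1}$ in $q_j$ along infinitely many retained indices then gives the claim. Your proposal is missing one of these two ingredients.
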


The proof is given at the end of Section~\ref{sec:arithmetic-schedules}.
It combines the Brjuno--Liouville example in
Remark~\ref{rem:brjuno-liouville} with the displacement estimate
\cite[Corollary~B]{AFLXZ}. 

Our construction requires a finite Brjuno sum, whereas the
super-Liouville range is rigid even for Lipschitz pseudo-rotations, as
explained in Remark~\ref{rem:denjoy-boundary-obstruction}.  This motivates the following question in the intermediate arithmetic
range at $C^1$ regularity.


\Needspace{9\baselineskip}
\begin{openquestion}
Let $\alpha$ be a non-Brjuno irrational number which is not
super-Liouville, so that
\[
 B(\alpha)=+\infty,
 \qquad
 \limsup_{n\to\infty}\frac{\log q_{n+1}}{q_n}<+\infty.
\]
Must every area-preserving $C^1$ irrational pseudo-rotation of the disk
with rotation number $\alpha$ be $C^0$-rigid, or can one construct such
a pseudo-rotation which is not $C^0$-rigid?
\end{openquestion}

We intend to investigate this question in subsequent work.


\subsection{Construction and proof strategy}\label{sec:proof-strategy}

The global and local results use the same Hamiltonian construction; the
arithmetic of the rotation number determines the allocation of scales and
the available summability.

The main difficulty is to realize the Denjoy boundary dynamics of
Theorem~\ref{thm:main} by an area-preserving disk diffeomorphism while
retaining differentiability in the normal direction.  We outline the
protected-core and moving-front mechanism below.

Section~\ref{sec:squeeze} constructs protected micro-squeezes
$L_{r,t}$ on the dyadic scales $\delta_r$.  Lemma~\ref{lem:batched-squeeze}
combines them into block composites with protection and exact
cancellation within each scale phase; see
Figure~\ref{fig:protected-phase-order}.

Section~\ref{sec:towers} places these composites on finite rotation towers
and allocates $d_{m,r}$ edges of each active front to scale $r$.
Lemma~\ref{lem:moving-front} reduces the defect on each active
source--target pair to flow time $\pm d_{m,r}^{-1}$; see
Figure~\ref{fig:batched-active-edges}.

\Needspace{6\baselineskip}
In Section~\ref{sec:density}, Proposition~\ref{prop:protected-density}
bounds the rotation-coordinate density from below on the active fronts.
Lemma~\ref{lem:stabilization} gives pairwise disjoint nondegenerate
intervals $I_j$ with $g_m(I_j)=I_{j+1}$ at late stages, producing
wandering intervals of the boundary limit; the stabilization mechanism
is illustrated in Figure~\ref{fig:fixed-pullback-density}.

Section~\ref{sec:hamiltonian} realizes the circle construction by
Hamiltonian maps in shrinking boundary collars.
The cotangent formula of Lemma~\ref{lem:ham-squeeze}, valid along orbit
segments in the exact zone, and the nested-zone control of
Lemma~\ref{lem:nested-cotangent} provide the coordinates used in the
derivative estimates; see Figure~\ref{fig:nested-exact-zones}.
The conjugates \eqref{eq:2d-HF} have the prescribed circle boundary
restrictions.

The quantitative estimates are established in
Section~\ref{sec:master-ledger}.  The natural-scale bounds of
Lemma~\ref{lem:hk-higher-variable} and the active-region cotangent
estimates of Lemma~\ref{lem:cotangent-derivatives-variable} yield
Proposition~\ref{prop:master-ledger}: on the physical preimage of each
active scale-$r$ source rectangle, the increment $\Delta_m=F_{m+1}-F_m$
satisfies
\[
	\begin{aligned}
		\|\Delta_m\|_{C^0}\le C_{\Delta,0}\frac{\delta_r}{d_{m,r}},
		\quad
		\|D\Delta_m\|_{C^0}\le C_{\Delta,1}\frac1{d_{m,r}},\quad\text{and}\quad
		\|D^2\Delta_m\|_{C^0}\le
		C_{\Delta,2}\frac{\delta_r^{-1}}{d_{m,r}}.
	\end{aligned}
\]
Corollary~\ref{cor:holder-stage}
interpolates the global component zero extensions and combines them by
\eqref{eq:holder-zero-gluing}.  With the macro-stage error budgets
$\Xi_m(\eta)$ defined in \eqref{eq:Xi-definition}, it gives
\[
 \|\Delta_m\|_{C^1}\le C_{\Delta,01}\Xi_m(0),
 \qquad [D\Delta_m]_{C^{0,\beta}}
 \le C_{\mathrm{Hol},\beta}\Xi_m(\beta).
\]

\Needspace{6\baselineskip}
Section~\ref{sec:master-convergence} turns these estimates into convergence
and disk completion.  Proposition~\ref{prop:master-convergence} gives
$C^1$ convergence under $\sum_m\Xi_m(0)<\infty$, and $C^{1,\beta}$
regularity of both the limit and its inverse under
$\sum_m\Xi_m(\beta)<\infty$.
The conjugacies stabilize away from the outer boundary, and a fixed rigid
inner collar permits attachment of a central rotating disk.

\Needspace{7\baselineskip}
Finally, Section~\ref{sec:arithmetic-schedules} defines the checkpoint
scales \eqref{eq:checkpoints}, and
Proposition~\ref{prop:brjuno-batched-schedule} supplies the tower data.
The unweighted allocation $(\eta=0)$ makes the block cost
bounded by a constant multiple of $(1+\log q_{j+1})/q_j$ and proves
the Brjuno $C^1$ result.
Integer allocation weighted by $\delta_r^{-\beta}$ gives the weighted cost in
\eqref{eq:weighted-Brjuno-class}.  Starting at a late checkpoint makes its
tail arbitrarily small and proves Theorem~\ref{thm:local-nonrigidity};
Lemma~\ref{lem:nu-growth} gives Corollary~\ref{cor:local-nonrigidity-nu}.

\Needspace{9\baselineskip}
Throughout the construction, all constants are positive.  We use $m$ for a
macro-stage and $r$ for a micro-scale.  The letter $j$ denotes a tower
level locally and a continued-fraction index in
Section~\ref{sec:arithmetic-schedules}.
Descriptive subscripts distinguish constants from maps and parameters.
Constants remain fixed once chosen, except for explicitly indicated
enlargements, which may retain the same notation without changing their
permitted dependencies.  Unless explicitly stated otherwise, estimate
constants are independent of the macro-stage $m$, the micro-scale $r$,
the block widths, the tower heights, and every later admissible increase
of $K$.  A subscript $\beta$ permits dependence on the H\"older exponent;
arithmetic dependencies are specified where they arise.

\flushbottom
\section{Variable protected squeezing models}\label{sec:squeeze}

The local model has a fixed contracting core and a stage-dependent outer transition.  The latter is lengthened so that the maximal expansion tends to one while the contracting part remains unchanged.

We work on
\[
 P=[-2,2],\qquad P^+=[-1,1].
\]

The following lemma provides the required squeeze with estimates uniform in the permitted outer expansion.

\begin{lemma}\label{lem:protected-squeeze}
Set $\kappa_{\mathrm{sq}}=e^{-3}$.  There is a fixed constant $C>0$ and there are constants
$C_{\mathrm{sq},\ell}>0$, $0\le\ell\le5$, all independent of $\varepsilon$, with the following property.  For every sufficiently small $\varepsilon>0$, set
\[
 A_\varepsilon:=C\varepsilon^{-1}+3>4.
\]
Then there exist a smooth odd vector field $V_\varepsilon$ on $(-A_\varepsilon,A_\varepsilon)$, vanishing near the boundary, and its flow $\Phi_\varepsilon^t$ such that, for $0\le t\le1$,
\begin{enumerate}[label=\textnormal{(S\arabic*)}]
\item $\Phi_\varepsilon^1(P)=P^+$;
\item $\kappa_{\mathrm{sq}}\le D\Phi_\varepsilon^t\le e^{\varepsilon/2}$;
\item $D\Phi_\varepsilon^t(z)<1$ implies $\Phi_\varepsilon^t(z)\in P^+$;
\item $\|V_\varepsilon\|_{C^\ell}\le C_{\mathrm{sq},\ell}$ for $1\le\ell\le5$, and for $|u|\le1$,
\[
 \|\Phi_\varepsilon^u-\id\|_{C^0}\le C_{\mathrm{sq},0}|u|,
 \qquad
 \|D\Phi_\varepsilon^u-1\|_{C^0}\le C_{\mathrm{sq},1}|u|,
\]
\[
 \|D^\ell\Phi_\varepsilon^u\|_{C^0}\le C_{\mathrm{sq},\ell}|u|,
 \qquad 2\le\ell\le5.
\]
\end{enumerate}
We refer to the fixed choices made in this construction, and the
resulting uniform squeeze bounds, as the fixed one-dimensional
squeeze data.
\end{lemma}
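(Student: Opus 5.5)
The plan is to build $V_\varepsilon$ explicitly as an odd field $V_\varepsilon(x)=-v_\varepsilon(|x|)\operatorname{sign}(x)$ with $v_\varepsilon\ge0$. The point is that in one dimension
\[
 D\Phi_\varepsilon^t(z)=\exp\Bigl(\int_0^t V_\varepsilon'(\Phi_\varepsilon^s(z))\,ds\Bigr).
\]
So (S2) reduces to the pointwise bounds $-3\le V_\varepsilon'\le \varepsilon/2$, and (S3) reduces to a sign condition on $V_\varepsilon'$.

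I will impose three regimes on $v=v_\varepsilon$ for $x\ge0$.

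\emph{Core $[0,1]$.} Here $v$ is smooth, with $v(x)=cx$ near $0$ and $v'\in(0,3]$, so $V'<0$ there. This zone is fixed, independent of $\varepsilon$.

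\emph{Plateau/ramp $[1,x_\varepsilon]$.} Here $v'\le0$, so $V'\ge0$. On $[1,2]$ the field stays of size about one, and $v$ is chosen so that
\[
 \int_1^2 \frac{dx}{v(x)}=1 .
\]

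\emph{Outer decay.} Beyond $2$, $v$ decreases to $0$ with $|v'|\le\varepsilon/2$. To make this uniform in $\varepsilon$, I take $v(x)=h(\varepsilon(x-x_0))$ for a fixed smooth decreasing profile $h$ with $|h'|\le1/2$, which vanishes identically past a fixed point. Then the $\ell$-th derivative of $v$ on this zone is bounded by $\varepsilon^\ell\|h^{(\ell)}\|$. The field vanishes beyond $x_0+O(\varepsilon^{-1})$, which is where the choice $A_\varepsilon=C\varepsilon^{-1}+3$ comes from.

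The junctions between the zones are fixed smooth gluings placed in $[1,3]$. Hence $\|V_\varepsilon\|_{C^\ell}\le C_{\mathrm{sq},\ell}$ for $1\le\ell\le5$, uniformly in $\varepsilon$.

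With this field the properties follow in order.

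\emph{(S1).} The flow moves every point monotonically toward $0$, and $0$ is the only zero of $V_\varepsilon$ inside the support. By the time normalization $\int_1^2dx/v=1$ together with oddness, $\Phi^1_\varepsilon(\pm2)=\pm1$. Monotonicity then gives $\Phi^1_\varepsilon(P)=P^+$.

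\emph{(S2).} Follows from the derivative formula, since $-3\le V'\le\varepsilon/2$ and $0\le t\le1$.

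\emph{(S3).} If $\Phi^t_\varepsilon(z)\notin P^+$, then by monotone inward motion the whole orbit segment $\Phi^s_\varepsilon(z)$, $0\le s\le t$, lies outside $(-1,1)$. There $V'\ge0$, so $D\Phi^t_\varepsilon(z)\ge1$. This is the contrapositive of (S3).

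\emph{(S4).} The $C^\ell$ bounds on $V_\varepsilon$ come from the construction. The flow bounds for $|u|\le1$ follow from the variational equations: $\Phi^u-\id=\int_0^u V\circ\Phi^s$, and $D^\ell\Phi^u$ solves a linear ODE whose inhomogeneity involves $V^{(k)}$, $k\le\ell$, and lower derivatives. Gr\"onwall with the uniform bounds on $\|V\|_{C^5}$ gives constants times $|u|$.

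The main obstacle is reconciling the exact normalization $\Phi^1_\varepsilon(P)=P^+$ with the sign constraints at the smooth junctions: $V'$ must change sign exactly at $|x|=1$, and $v$ must be nonincreasing on $[1,\infty)$. I would handle this with a one-parameter family of plateau profiles on $[1,2]$, for instance $v_\lambda=\lambda w$ with $w$ fixed, positive and nonincreasing, and pick $\lambda$ by continuity so that the transit time from $2$ to $1$ equals $1$. The core slope $v'\le3$ must then be kept compatible with $v(1)=\lambda w(1)$ being of order one. I expect this to hold with room to spare, since a linear core $v=cx$ with $c\approx1$ needs only $c\le3$.

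Finally, all of these choices are independent of $\varepsilon$ except the rescaled outer profile. This is what makes the constants $C$ and $C_{\mathrm{sq},\ell}$ uniform, and it is why I refer to them as the fixed one-dimensional squeeze data.
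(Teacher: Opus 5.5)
Your overall scheme matches the paper's: a fixed contracting core on $[0,1]$, a stretch on $[1,2]$ that fixes the transit time, an $\varepsilon$-stretched outer decay, the variational formula for (S2), inward monotone motion with $V'\ge0$ off $(-1,1)$ for (S3), and Gr\"onwall for (S4). However, the plateau you propose for your ``main obstacle'' breaks the upper bound in (S2).

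In one dimension, $D\Phi_\varepsilon^t(z)=V_\varepsilon(\Phi_\varepsilon^t z)/V_\varepsilon(z)$ whenever $V_\varepsilon(z)\ne0$ (differentiate $\Phi^t\Phi^s=\Phi^s\Phi^t$ in $s$ at $s=0$). In particular $D\Phi_\varepsilon^1(2)=v(1)/v(2)$. If $v=\lambda w$ on $[1,2]$ with $w$ fixed, nonincreasing and nonconstant, this ratio is an $\varepsilon$-independent number $>1$. Then $D\Phi_\varepsilon^1(2)\le e^{\varepsilon/2}$ fails once $\varepsilon$ is small. The same failure occurs for any $\varepsilon$-independent gluing in $[1,3]$ along which $v$ genuinely decreases.

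More generally, on any $\varepsilon$-independent piece of the field with $|x|>1$, (S2) forces $V'\le0$: a point where $V'>0$ would gain a fixed factor $>1$ in a fixed short time. Likewise, (S3) forces $V'\ge0$ there. Hence $v$ must be exactly constant from $1$ up to where the stretched profile begins. Your global claim $-3\le V'\le\varepsilon/2$ is true only for such a constant plateau. A related small inconsistency: $v'\in(0,3]$ on the closed interval $[0,1]$ cannot be joined smoothly to $v'\le0$; you need $v'(1)=0$.

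The repair is the paper's choice, and it removes the obstacle entirely.
\begin{enumerate}[label=\textnormal{(\alph*)}]
\item Take $v\equiv1$ on $[1,2]$. The transit time from $2$ to $1$ is then exactly $1$, with no continuity argument.
\item On the core take $v=\psi=\int_0^x\mu$, where $\mu:[0,1]\to[0,3]$ is smooth, equal to one near $0$, vanishing near $1$, and satisfies $\int_0^1\mu=1$. This gives $v(1)=1$, $v'(1)=0$, and $-3\le V'\le0$ on the core.
\item Put all the decrease into $v(x)=1-\zeta((x-2)/L_\varepsilon)$ on $[2,2+L_\varepsilon]$. Here $\zeta$ is a fixed cutoff vanishing on $(-\infty,0]$, $L_\varepsilon=C\varepsilon^{-1}$, and $C>2\|\zeta'\|_\infty$, so that $0\le V'\le\varepsilon/2$ there.
\end{enumerate}
With these changes the rest of your argument goes through as written.
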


\begin{proof}
We choose once and for all a smooth function $\mu:[0,1]\to[0,3]$ which equals one near $0$, vanishes near $1$, and satisfies $\int_0^1\mu=1$.  We put
\[
 \psi(x)=\int_0^x\mu(s)\,ds,
\]
and define, on the positive half-line,
\[
 V_\varepsilon(x)=-\psi(x)\quad(0\le x\le1),
 \qquad
 V_\varepsilon(x)=-1\quad(1\le x\le2).
\]
We choose a fixed smooth nondecreasing cutoff $\zeta:\R\to[0,1]$ with $\zeta=0$ on $(-\infty,0]$ and $\zeta=1$ on $[1,\infty)$.  We then choose
$C>2\|\zeta'\|_\infty$ once and for all and set
$L_\varepsilon=C\varepsilon^{-1}$.  Thus
$\|\zeta'\|_\infty/L_\varepsilon\le\varepsilon/2$.  On the outer transition we set
\[
 V_\varepsilon(x)=-1+\zeta\!\left(\frac{x-2}{L_\varepsilon}\right)
 \qquad(2\le x\le 2+L_\varepsilon).
\]
We extend this field by zero a little beyond $2+L_\varepsilon$ and then extend it oddly.  With $A_\varepsilon=L_\varepsilon+3$, all joins are smooth.  The inner part is independent of $\varepsilon$, while the $\ell$-th derivative of the outer transition is $O(L_\varepsilon^{-\ell})$.  Hence, after enlarging the fixed constants $C_{\mathrm{sq},\ell}$ if necessary, all stated fixed-order bounds are uniform and
\[
 -3\le V_\varepsilon'\le\varepsilon/2.
\]

The point $2$ moves with speed $-1$ until it reaches $1$, so $\Phi_\varepsilon^1(2)=1$; oddness gives the other endpoint and proves (S1).  The variational identity
\[
 D\Phi_\varepsilon^t(z)
 =\exp\!\left(\int_0^tV_\varepsilon'(\Phi_\varepsilon^r(z))\,dr\right)
\]
gives (S2).  The interval $P^+$ is forward invariant.  Outside its interior one has $V_\varepsilon'\ge0$, so if the final point is outside $P^+$ the whole orbit segment has derivative at least one.  This proves (S3).

Finally, the vector fields and their derivatives through order five are uniformly bounded.  The standard variational equations for the flow on the fixed time interval $[-1,1]$ therefore give uniform bounds for its spatial derivatives through order five.  We enlarge $C_{\mathrm{sq},0},\ldots,C_{\mathrm{sq},5}$ once more to cover these flow estimates.  Since $\Phi_\varepsilon^0=\id$ and all spatial derivatives of order at least two vanish at time zero, integration of the variational equations gives the factor $|u|$ in (S4).
\end{proof}

\subsection{The scale-dependent family}\label{sec:variable-model}

Let $\varepsilon_{\mathrm{adm}}>0$ be a threshold for
Lemma~\ref{lem:protected-squeeze}.  We choose $K_{\min}$ so large that
$\overline\lambda:=e^{1/(2K_{\min}^2)}<3/2$ and, for every integer
$r\ge K_{\min}$,
\[
 r^{-2}\le\varepsilon_{\mathrm{adm}},\qquad
 2(Cr^2+3)2^{-r}<1,
\]
and
\begin{equation}\label{eq:nested-angular-supports}
 (C(r+1)^2+3)2^{-(r+1)}<(Cr^2+3)2^{-r}.
\end{equation}
We later choose a starting scale $K\ge K_{\min}$.  For every $r\ge K$ set
\[
 \delta_r=2^{-r},\qquad
 \varepsilon_r=r^{-2},\qquad
 \lambda_r=e^{\varepsilon_r/2},
 \qquad A_r=Cr^2+3.
\]
Let $V_r=V_{\varepsilon_r}$ and $\Phi_r^t=\Phi_{\varepsilon_r}^t$.
Then
\[
 \prod_{r\ge K}\lambda_r<\infty,\qquad
 \sup_{r\ge K}\lambda_r\le\overline\lambda<\frac32,\qquad
 2A_r\delta_r<1.
\]
We fix a cotangent-zone constant $\mathfrak b$, independently of $K$, with
\begin{equation}\label{eq:b-choice}
 \frac{\overline\lambda}{2}<\mathfrak b<1;
 \qquad\text{for definiteness, take }\mathfrak b=\frac78.
\end{equation}

Fix $\theta_0\in\T$ and put
\[
 J_r=\theta_0+\delta_r(-A_r,A_r),
 \qquad P_r=\theta_0+\delta_rP.
\]
We call $J_r$ the scale-$r$ \emph{squeeze interval} and $P_r$ its
\emph{protected core}.  Equation~\eqref{eq:nested-angular-supports} gives
$\overline{J_{r+1}}\subset J_r$, while
$\theta_0+\delta_rP^+=P_{r+1}$.  We denote by $L_{r,t}$ the
rescaled copy of $\Phi_r^t$ on $J_r$, extended by the identity outside
$J_r$, and call it the scale-$r$ \emph{protected micro-squeeze}.
Flow parameters $t$ lie in $[0,1]$, and signed times $u$ lie in
$[-1,1]$.  Uniformly in $r$,
\begin{align}
 L_{r,1}(P_r)&=P_{r+1},\label{eq:full-squeeze}\\
 \kappa_{\mathrm{sq}}\le L_{r,t}'&\le\lambda_r,\label{eq:L-first}\\
 L_{r,t}'(x)<1&\Longrightarrow L_{r,t}(x)\in P_{r+1},\label{eq:L-localized}\\
 \|L_{r,u}-\id\|_{C^0}&\le C_{\mathrm{sq},0}\delta_r|u|,\label{eq:L0}\\
 \|L_{r,u}'-1\|_{C^0}&\le C_{\mathrm{sq},1}|u|,\label{eq:L1}\\
 \|L_{r,u}^{(\ell)}\|_{C^0}&\le C_{\mathrm{sq},\ell}\delta_r^{1-\ell}|u|,
 \qquad 2\le\ell\le5.\label{eq:Lhigher}
\end{align}

We now combine consecutive scales.  For integers $K\le a<b$, write
$w=b-a$ and, for $r=a,\ldots,b-1$, define
\[
 \tau_{a,b,r}(s)
 :=\min\{1,\max\{0,s-(b-1-r)\}\},\qquad 0\le s\le w.
\]
Set
\begin{equation}\label{eq:batched-circle-map}
 G_{a,b}(s)
 :=L_{b-1,\tau_{a,b,b-1}(s)}\circ\cdots\circ
 L_{a,\tau_{a,b,a}(s)}.
\end{equation}
We call $G_{a,b}$ the \emph{block composite}.  Its \emph{scale-$r$ phase}
is the parameter interval $[b-1-r,b-r]$, on which the scale-$r$ flow
runs from time zero to time one while all coarser factors are the
identity and all finer factors are at time one.

\begin{lemma}\label{lem:batched-squeeze}
The family in \eqref{eq:batched-circle-map} satisfies
\[
 G_{a,b}(0)=\id,\qquad G_{a,b}(w)(P_a)=P_b,
\]
and, for every $s\in[0,w]$,
\begin{equation}\label{eq:batched-protection}
 G_{a,b}'(s;x)<1\quad\Longrightarrow\quad G_{a,b}(s)(x)\in P_b.
\end{equation}
If $s,t$ lie in the same unit phase and that phase corresponds to the
scale $r$, then
\begin{equation}\label{eq:phase-cancellation-circle}
 G_{a,b}(t)^{-1}G_{a,b}(s)=L_{r,s-t}.
\end{equation}
Consequently, for $|s-t|\le1$ in one phase,
\[
 \|G_{a,b}(t)^{-1}G_{a,b}(s)-\id\|_{C^0}
 \le C_{\mathrm{sq},0}\delta_r|s-t|,
\]
\[
 \|D(G_{a,b}(t)^{-1}G_{a,b}(s))-1\|_{C^0}
 \le C_{\mathrm{sq},1}|s-t|.
\]
\end{lemma}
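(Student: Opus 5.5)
The plan is to read everything off the explicit phase structure of $\tau_{a,b,r}$, together with the single-scale properties \eqref{eq:full-squeeze}--\eqref{eq:Lhigher} and the nesting $\overline{J_{r+1}}\subset J_r$, $\theta_0+\delta_rP^+=P_{r+1}$.

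\emph{Endpoint and phase identities.} At $s=0$ every $\tau_{a,b,r}(0)=\min\{1,\max\{0,-(b-1-r)\}\}=0$, because $b-1-r\ge0$. Each $L_{r,0}=\id$, so $G_{a,b}(0)=\id$. At $s=w$ we have $w-(b-1-r)=r-a+1\ge1$, so all times equal one. Applying \eqref{eq:full-squeeze} successively then gives $L_{b-1,1}\circ\cdots\circ L_{a,1}(P_a)=P_b$.

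For \eqref{eq:phase-cancellation-circle}, take $s\in[b-1-r,b-r]$. For $r'>r$ we get $\tau_{a,b,r'}(s)=1$, since $s-(b-1-r')\ge1$. For $r'<r$ we get $\tau_{a,b,r'}(s)=0$, and for $r'=r$ the time is $s-(b-1-r)$. Hence
\[
G_{a,b}(s)=M_r\circ L_{r,s-(b-1-r)},\qquad M_r:=L_{b-1,1}\circ\cdots\circ L_{r+1,1},
\]
with $M_r$ independent of $s$ inside the phase. It follows that $G_{a,b}(t)^{-1}G_{a,b}(s)=L_{r,t'}^{-1}L_{r,s'}=L_{r,s'-t'}$ by the flow property, where $s'-t'=s-t$. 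The two displayed bounds are then exactly \eqref{eq:L0} and \eqref{eq:L1} with $u=s-t$.

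\emph{Protection.} This is the only step with content. Fix $s$ in the scale-$r$ phase and write $G=M_r\circ L_{r,\sigma}$. By the chain rule,
\[
G'(x)=M_r'(L_{r,\sigma}x)\,L_{r,\sigma}'(x),
\]
and $M_r'$ is itself a product of factors $L_{r',1}'$ evaluated along the orbit. If $G'(x)<1$, at least one factor in this product is $<1$. Suppose the last such factor, in order of application, comes from scale $r''$, with $r\le r''\le b-1$. By \eqref{eq:L-localized}, the image point after that factor lies in $P_{r''+1}$. It remains to show that the later factors $L_{r''',1}$ with $r'''>r''$ keep it inside $P_b$.

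I would argue inductively that $L_{r''',1}$ maps $P_{r'''}$ into $P_{r'''+1}\subset P_{r'''}$ by \eqref{eq:full-squeeze}. Since $P_{r''+1}$ is the core at scale $r''+1$, the next map $L_{r''+1,1}$ sends it to $P_{r''+2}$, and so on up to $P_b$. So the cascade always lands in $P_b$.

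There is one subtlety. If the last contracting factor is at scale $r''$ but the later factors, all with derivative $\ge1$, would still map into the nested cores as above, so no issue arises there. However, if $r''=r$ with $\sigma<1$, \eqref{eq:L-localized} only gives the image in $P_{r+1}$. The next factor $L_{r+1,1}$ then carries $P_{r+1}$ to $P_{r+2}$, so this case is also covered. The remaining worry, which I expect to be the main obstacle, is when every individual factor is $\ge1$ but the product is $<1$. This cannot happen: a product of numbers $\ge1$ is $\ge1$. So some factor is $<1$, and the argument above applies. I would write this out with care about the order of composition and the use of the forward invariance $L_{r,t}(P_r)\subset P_r$ (from $P^+$ being forward invariant in Lemma~\ref{lem:protected-squeeze}). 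That invariance handles the case $r''=b-1$ directly, since then $P_{r''+1}=P_b$.
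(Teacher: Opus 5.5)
Your proposal is correct and follows essentially the same route as the paper. The phase decomposition $G_{a,b}(s)=\mathcal L_r\circ L_{r,s-(b-1-r)}$ yields the cancellation, with the estimates following from \eqref{eq:L0}--\eqref{eq:L1}. Protection then follows by locating a factor with derivative below one, applying \eqref{eq:L-localized}, and letting the finer time-one factors carry $P_{r''+1}$ successively to $P_b$ via \eqref{eq:full-squeeze}. Your closing forward-invariance remark is not needed, since the case $r''=b-1$ is already covered by \eqref{eq:L-localized} itself, but it does no harm.
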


\begin{proof}
At $s=w$ every factor is at time one, so repeated use of
\eqref{eq:full-squeeze} sends $P_a$ to $P_b$.  If the derivative of the
composite is less than one, some participating factor has derivative less
than one.  By \eqref{eq:L-localized}, its image lies in the next protected
core.  All finer factors following it are already at time one, and hence
carry this core successively to $P_b$.  This proves
\eqref{eq:batched-protection}.  More explicitly, in the phase
$[b-1-r,b-r]$ all coarser factors are the identity and all finer factors
are at time one.  With
$\mathcal L_r=L_{b-1,1}\circ\cdots\circ L_{r+1,1}$ (the identity if $r=b-1$),
we have $G_{a,b}(s)=\mathcal L_r\circ L_{r,s-(b-1-r)}$ throughout that phase.
Thus the common factor $\mathcal L_r$ cancels in
$G_{a,b}(t)^{-1}G_{a,b}(s)$, and the flow group law gives
\eqref{eq:phase-cancellation-circle}.  This also covers phase endpoints.
The estimates follow from \eqref{eq:L0}--\eqref{eq:L1}.
\end{proof}

\Needspace{18\baselineskip}
Figure~\ref{fig:protected-phase-order} distinguishes parameter activation
from the order in which the full composite acts on a point.

\begin{figure}[H]
\centering
\begin{tikzpicture}[x=.97cm,y=.85cm,font=\small,>=Latex]
  \node[anchor=west,font=\bfseries] at (0,4.8) {(a) Protected cores};
  \draw[nipl,fill=gray!7] (.2,3.55) rectangle (6.2,4.15);
  \draw[nipl,fill=coregreen] (2,3.55) rectangle (4.4,4.15);
  \node at (.8,3.85) {$J_r$};
  \node at (3.2,3.85) {$P_r$};
  \draw[nipa] (3.2,3.45) -- (3.2,2.8);
  \node[anchor=west] at (3.45,3.1) {$L_{r,1}|_{P_r}$};
  \draw[nipl,fill=gray!7] (1,2.1) rectangle (5.4,2.7);
  \draw[nipl,fill=coregreen] (2.6,2.1) rectangle (3.8,2.7);
  \node at (1.65,2.4) {$J_{r+1}$};
  \node at (3.2,2.4) {$P_{r+1}$};
  \node[align=center] at (3.2,1.35)
    {$L_{r,t}'(x)<1$\\$\Longrightarrow\ L_{r,t}(x)\in P_{r+1}$};
  \node[anchor=west,font=\bfseries] at (7.5,4.8) {(b) Two orders ($b=a+3$)};
  \foreach \x/\lab in {7.5/a+2,9.7/a+1,11.9/a} {
    \draw[nipl,fill=gray!12] (\x,3.55) rectangle ++(2.2,.6);
    \node at (\x+1.1,3.85) {$r=\lab$};
  }
  \foreach \x/\lab in {7.5/0,9.7/1,11.9/2,14.1/3}
    \node at (\x,4.4) {$\lab$};
  \node at (10.8,3.1) {parameter activation: fine to coarse};
  \node (pa) at (7.95,2.05) {$P_a$};
  \node (pa1) at (9.85,2.05) {$P_{a+1}$};
  \node (pa2) at (11.85,2.05) {$P_{a+2}$};
  \node (pb) at (13.7,2.05) {$P_b$};
  \draw[nipa] (pa) -- (pa1) node[midway,above=4pt] {$L_{a,1}$};
  \draw[nipa] (pa1) -- (pa2) node[midway,above=4pt] {$L_{a+1,1}$};
  \draw[nipa] (pa2) -- (pb) node[midway,above=4pt] {$L_{a+2,1}$};
  \node at (10.8,1.35) {point application: coarse to fine};
  \node at (7.15,.25)
    {$G_{a,b}(t)^{-1}G_{a,b}(s)=L_{r,s-t}$ \quad ($s,t$ in one scale-$r$ phase)};
\end{tikzpicture}
\caption{Protected squeezing and phase order. The cores satisfy
$L_{r,1}(P_r)=P_{r+1}$. The outer intervals contain the supports at successive
scales; $L_{r,1}$ preserves $J_r$. Parameter activation proceeds from fine to coarse
scales, whereas the full composite acts on a point from coarse to fine
scales. In one scale-$r$ phase the completed fine-scale factors cancel,
leaving $L_{r,s-t}$. Interval widths are schematic.}
\label{fig:protected-phase-order}
\end{figure}

\section{Batched rotation towers and the protected moving front}\label{sec:towers}

We use consecutive blocks of micro-scales
\[
 K=a_0<b_0=a_1<b_1=a_2<\cdots,
 \qquad w_m:=b_m-a_m,
\]
and a strictly increasing sequence of positive integer tower heights
$T_m$.  Put $T_{-1}=0$,
$N_m=T_{m-1}$, and $D_m=T_m-N_m$.  We require
\begin{equation}\label{eq:tower-separation}
 D_m\ge2w_m,\qquad
 \min_{1\le n\le2T_m+4}\normT{n\alpha}>2A_{a_m}\delta_{a_m}.
\end{equation}
The schedules in Section~\ref{sec:arithmetic-schedules} have these
properties.  In particular, the intervals
$R_\alpha^jJ_{a_m}$, $|j|\le T_m+2$, have pairwise disjoint closures.

Fix an allocation exponent $0\le\eta<1$.  It will be $0$ for the $C^1$
construction and $\beta$ for the $C^{1,\beta}$ construction.  Set
\[
 U_m(\eta):=\sum_{r=a_m}^{b_m-1}\delta_r^{-\eta}.
\]
First define
\[
 d_{m,r}^{0}:=
 \left\lceil\frac{(D_m-w_m)\delta_r^{-\eta}}{U_m(\eta)}\right\rceil.
\]
Their sum is at most $D_m$.  We add the unused integer levels to
$d_{m,a_m}^{0}$ and denote the resulting positive integers by $d_{m,r}$;
thus
\begin{equation}\label{eq:allocation-bounds}
 \sum_{r=a_m}^{b_m-1}d_{m,r}=D_m,\qquad
 \frac{\delta_r^{-\eta}}{d_{m,r}}
 \le\frac{U_m(\eta)}{D_m-w_m}
 \le\frac{2U_m(\eta)}{D_m}.
\end{equation}

Let $e_{m,b_m}=0$ and
$e_{m,r}=\sum_{s=r}^{b_m-1}d_{m,s}$.  We define a continuous nondecreasing
function $\varphi_m:\R\to[0,w_m]$ by setting it equal to $0$ on
$(-\infty,0]$, to $w_m$ on $[D_m,\infty)$, and, for
$e_{m,r+1}\le x\le e_{m,r}$,
\begin{equation}\label{eq:staircase-profile}
 \varphi_m(x)=b_m-1-r+\frac{x-e_{m,r+1}}{d_{m,r}}.
\end{equation}
Finally put
\begin{equation}\label{eq:tower-phase}
 \sigma_{m,j}:=\varphi_m(T_m+1-|j|),\qquad j\in\Z.
\end{equation}
By \eqref{eq:tower-phase}, $\sigma_{m,j}=w_m$ for $|j|\le N_m+1$ and
$\sigma_{m,j}=0$ for $|j|\ge T_m+1$.  Because every breakpoint
$e_{m,r}$ is an integer, each active adjacent pair lies in one phase, and
for a unique $r$ its parameter difference has absolute value $d_{m,r}^{-1}$.

\begin{definition}\label{def:tower-regions}
The block of integer micro-scales $a_m\le r<b_m$ is called
\emph{macro-stage $m$}, with width $w_m=b_m-a_m$.  Its \emph{rotation tower}
is the family
\[
 \mathcal T_m:=\{R_\alpha^jJ_{a_m}:j\in\Z,\ |j|\le T_m+2\};
\]
$R_\alpha^jJ_{a_m}$ is its level $j$.  The \emph{old plateau} consists of
the levels with $|j|\le N_m+1$, equivalently $\sigma_{m,j}=w_m$.
The \emph{taper levels} are those with
$N_m+1<|j|<T_m+1$, equivalently $0<\sigma_{m,j}<w_m$.
The remaining tower levels satisfy $\sigma_{m,j}=0$.
These equivalences refer to the staircase profile
\eqref{eq:staircase-profile}--\eqref{eq:tower-phase}.

A \emph{tower edge} is the ordered pair of levels $(j,j+1)$.
For $-T_m-1\le j\le T_m$, its \emph{source} and \emph{target intervals} are
\[
 J_{m,j}^{\mathrm{src}}:=R_\alpha^jJ_{a_m},\qquad
 J_{m,j}^{\mathrm{tgt}}:=R_\alpha^{j+1}J_{a_m}
 =R_\alpha(J_{m,j}^{\mathrm{src}}).
\]
It is \emph{active} when $\sigma_{m,j}\ne\sigma_{m,j+1}$.
For the chosen staircase profile, the active index set is
\[
 \mathcal I_m=\mathcal I_m^-\sqcup\mathcal I_m^+,\qquad
 \mathcal I_m^-:=\{-T_m-1,\ldots,-N_m-2\},\quad
 \mathcal I_m^+:=\{N_m+1,\ldots,T_m\}.
\]
The pairs with indices in $\mathcal I_m^-$ and $\mathcal I_m^+$ form the
\emph{negative} and \emph{positive active fronts}, respectively.
We call them the \emph{fresh tower edges}: there are exactly
$D_m=T_m-N_m$ on each front.  The fronts are blocks of orbit indices;
their source unions need not be connected subsets of the circle.

An active pair is \emph{active at scale $r$} if the open interval between
$\sigma_{m,j}$ and $\sigma_{m,j+1}$ is contained in
$(b_m-1-r,b_m-r)$, the interior of the scale-$r$ phase of
$G_{a_m,b_m}$.  Let $\mathcal I_{m,r}$ be its set of indices.  The integer
breakpoints of \eqref{eq:staircase-profile} give
$\mathcal I_m=\bigsqcup_{r=a_m}^{b_m-1}\mathcal I_{m,r}$;
each front has exactly $d_{m,r}$ scale-$r$ edges, and on such an edge
$|\sigma_{m,j+1}-\sigma_{m,j}|=d_{m,r}^{-1}$.
Indeed, the interval $[e_{m,r+1},e_{m,r}]$ has integer endpoints and
length $d_{m,r}$, and the staircase is affine there with slope
$d_{m,r}^{-1}$.  We call this phase-by-phase subdivision a
\emph{batched front}.

For an active pair, $J_{m,j}^{\mathrm{src}}$ is its
\emph{active source interval} and $J_{m,j}^{\mathrm{tgt}}$ its
\emph{corresponding active target interval}.  Their unions are
\[
 E_m^{\mathrm{src}}=\bigcup_{j\in\mathcal I_m}J_{m,j}^{\mathrm{src}},
 \qquad
 E_m^{\mathrm{tgt}}=\bigcup_{j\in\mathcal I_m}J_{m,j}^{\mathrm{tgt}}
 =R_\alpha(E_m^{\mathrm{src}}).
\]
Activity concerns a pair, whereas the plateau and taper are collections
of levels.  A plateau endpoint or a zero-phase level may belong to an
active pair; an active target need not be an active source for the next
pair.  The protected block \eqref{eq:protected-region} uses only the
cores with $|j|\le N_m$ and is distinct from the old plateau.
\end{definition}

Define $M_m$ levelwise by
\begin{equation}\label{eq:Mm}
 M_m|_{R_\alpha^jJ_{a_m}}
 =R_\alpha^jG_{a_m,b_m}(\sigma_{m,j})R_\alpha^{-j},
 \qquad |j|\le T_m+2,
\end{equation}
and by the identity elsewhere.  Nested supports and stationarity near their
endpoints make the pieces smooth.  We set
\begin{equation}\label{eq:hmqm}
 h_0=\id,\qquad h_{m+1}=M_mh_m,\qquad
 g_m=h_m^{-1}R_\alpha h_m.
\end{equation}

\Needspace{18\baselineskip}
Figure~\ref{fig:batched-active-edges} records the distinction between levels and active edges.

\begin{figure}[H]
\centering
\begin{tikzpicture}[x=.97cm,y=.83cm,font=\small,>=Latex]
  \node at (-4.7,2.85) {negative active front};
  \node at (0,2.85) {old plateau};
  \node at (4.7,2.85) {positive active front};
  \draw[nipl,fill=softblue] (-2.8,0) rectangle (2.8,2.15);
  \draw[nipl] (-6.5,0) -- (-5.25,.55) -- (-4.1,1.25)
    -- (-2.8,2.15) -- (2.8,2.15) -- (4.1,1.25)
    -- (5.25,.55) -- (6.5,0);
  \node[anchor=east] at (-6.8,2.15) {$\sigma_{m,j}$};
  \node at (0,1.73) {$\sigma_{m,j}=w_m$};
  \draw[nipl,dashed,fill=coregreen] (-2.05,.43) rectangle (2.05,1.1);
  \node at (0,.78) {protected cores: $|j|\le N_m$};
  \draw[nipa] (-7.05,0) -- (7.1,0) node[right] {$j$};
  \foreach \x in {-6.5,-2.8,2.8,6.5}
    \draw[nipl] (\x,.09) -- (\x,-.09);
  \node[anchor=north] at (-6.5,-.15) {$-T_m-1$};
  \node[anchor=north] at (-2.8,-.15) {$-N_m-1$};
  \node[anchor=north] at (2.8,-.15) {$N_m+1$};
  \node[anchor=north] at (6.5,-.15) {$T_m+1$};
  \node[align=center] at (-4.8,-1.3)
    {sources $j=-T_m-1,\ldots,-N_m-2$\\
     scale-$r$ quotient: $L_{r,-1/d_{m,r}}$};
  \node[align=center] at (4.8,-1.3)
    {sources $j=N_m+1,\ldots,T_m$\\
     scale-$r$ quotient: $L_{r,1/d_{m,r}}$};
  \node at (0,-.85) {one active edge};
  \node (src) at (-.7,-1.55) {$j$};
  \node (tgt) at (.7,-1.55) {$j+1$};
  \draw[nipa] (src) -- (tgt);
\end{tikzpicture}
\caption{A batched front in orbit-index coordinates. Each front has
$d_{m,r}$ scale-$r$ edges. Activity belongs to an edge $(j,j+1)$; the
plateau endpoints may be an active source or target, while the protected
cores use only $|j|\le N_m$. The middle quotient in
\eqref{eq:moving-front} has negative time on the negative front and
positive time on the positive front. The horizontal ordering is by
orbit index, not by spatial position on the circle. The profile is schematic.}
\label{fig:batched-active-edges}
\end{figure}

\Needspace{14\baselineskip}
\begin{lemma}\label{lem:moving-front}
Let $S_m=M_m^{-1}R_\alpha M_m$.  On a source level with
$-T_m-1\le j\le T_m$,
\begin{equation}\label{eq:moving-front}
 S_m
 =R_\alpha^{j+1}G_{a_m,b_m}(\sigma_{m,j+1})^{-1}
 G_{a_m,b_m}(\sigma_{m,j})R_\alpha^{-j}.
\end{equation}
If $j\in\mathcal I_{m,r}$, the middle quotient is
$L_{r,\pm d_{m,r}^{-1}}$.  The defect $S_m-R_\alpha$ is supported in
$E_m^{\mathrm{src}}$, both maps carry every active source interval onto its
target, and
\begin{align}
 \|S_m-R_\alpha\|_{C^0(J_{m,j}^{\mathrm{src}})}
 &\le C_{\mathrm{sq},0}\frac{\delta_r}{d_{m,r}},\label{eq:circle-front-0}\\
 \|DS_m-1\|_{C^0(J_{m,j}^{\mathrm{src}})}
 &\le C_{\mathrm{sq},1}\frac1{d_{m,r}}.\label{eq:circle-front-1}
\end{align}
\end{lemma}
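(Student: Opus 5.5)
Our approach is to compute $S_m$ level by level from the definition \eqref{eq:Mm} and then read off the estimates from Lemma~\ref{lem:batched-squeeze}.

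\textbf{Formula \eqref{eq:moving-front}.} Fix a source level $R_\alpha^jJ_{a_m}$ with $-T_m-1\le j\le T_m$. Since $G_{a_m,b_m}(s)$ is a composite of the $L_{r,t}$, it preserves $J_{a_m}$. Hence $M_m$ maps this level onto itself by $R_\alpha^jG_{a_m,b_m}(\sigma_{m,j})R_\alpha^{-j}$. Then $R_\alpha$ carries the level onto level $j+1$, which satisfies $|j+1|\le T_m+1\le T_m+2$; here \eqref{eq:Mm} applies, and $M_m^{-1}$ acts there as $R_\alpha^{j+1}G_{a_m,b_m}(\sigma_{m,j+1})^{-1}R_\alpha^{-j-1}$. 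Composing gives \eqref{eq:moving-front}. The disjointness of closures from \eqref{eq:tower-separation} ensures that the levelwise definitions do not interfere.

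\textbf{Support of the defect.} Off the union of the levels with $-T_m-1\le j\le T_m$, we use the following facts.
\begin{itemize}
\item Points outside every tower level, whose images under $R_\alpha$ also lie outside every level, are fixed by $M_m$.
\item The levels $j=T_m+1,T_m+2$ and $j=-T_m-2$ carry $\sigma=0$, so $M_m=\id$ there. The boundary cases (images entering level $-T_m-1$, or leaving level $T_m+1$) involve only levels with $\sigma=0$.
\end{itemize}
Together these give $S_m=R_\alpha$ there.

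On a source level with $\sigma_{m,j}=\sigma_{m,j+1}$, the middle quotient in \eqref{eq:moving-front} is the identity, so $S_m=R_\alpha$. The defect is therefore supported in $E_m^{\mathrm{src}}$. Both $S_m$ and $R_\alpha$ carry $J_{m,j}^{\mathrm{src}}$ onto $J_{m,j}^{\mathrm{tgt}}$, because the middle quotient preserves $J_{a_m}$.

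\textbf{Middle quotient and estimates.} For $j\in\mathcal I_{m,r}$, the staircase construction gives that $\sigma_{m,j}$ and $\sigma_{m,j+1}$ lie in the closed scale-$r$ phase, with $|\sigma_{m,j+1}-\sigma_{m,j}|=d_{m,r}^{-1}$. The sign is $+$ on the positive front, where $\sigma$ decreases in $j$, and $-$ on the negative front. By \eqref{eq:phase-cancellation-circle}, the quotient equals $L_{r,\pm d_{m,r}^{-1}}$.

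Then
\[
S_m-R_\alpha=R_\alpha^{j+1}\bigl(L_{r,u}-\id\bigr)R_\alpha^{-j},\qquad u=\pm d_{m,r}^{-1},
\]
with translations as outer factors. Since the outer factors are translations, \eqref{eq:L0} and \eqref{eq:L1} transfer directly and yield \eqref{eq:circle-front-0}--\eqref{eq:circle-front-1}.

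\textbf{Main obstacle.} The only delicate point is the bookkeeping at the ends of the fronts. One must check that level $T_m+1$, an active target of the pair $(T_m,T_m+1)$, and level $-T_m-1$ are treated consistently, and that the ``identity elsewhere'' convention matches $\sigma=0$ on the outermost levels. We will verify this directly from \eqref{eq:tower-phase}.
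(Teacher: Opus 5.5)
Your proposal is correct and follows the paper's own argument: the levelwise composition of \eqref{eq:Mm} gives \eqref{eq:moving-front}, Lemma~\ref{lem:batched-squeeze} combined with the integer staircase breakpoints gives the single-flow quotient $L_{r,\pm d_{m,r}^{-1}}$ and the estimates, and equal adjacent parameters give the support claim. Your end-of-tower bookkeeping, using $\sigma_{m,j}=0$ for $|j|\ge T_m+1$, spells out what the paper leaves implicit; one cosmetic correction is that in lifts the defect is $(L_{r,u}-\id)\circ R_\alpha^{-j}$, since the outer translation $R_\alpha^{j+1}$ cancels in the difference.
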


\begin{proof}
The first map applies the composite attached to level $j$; after rotation,
the inverse composite attached to level $j+1$ is applied.  This proves
\eqref{eq:moving-front}.  The phase alignment in
\eqref{eq:staircase-profile} and Lemma~\ref{lem:batched-squeeze} give the
single-flow quotient and estimates
\eqref{eq:circle-front-0}--\eqref{eq:circle-front-1}.  Equal adjacent parameters
cancel, proving the support assertion.
\end{proof}

\section{Protected density and stabilized wandering intervals}\label{sec:density}

For a circle diffeomorphism $h$, let
$\rho_h(y)=h'(h^{-1}(y))$ be its \emph{rotation-coordinate density}.
Write $\rho_m=\rho_{h_m}$ and define the \emph{protected block} by
\begin{equation}\label{eq:protected-region}
 \mathcal P_m=\bigcup_{|j|\le N_m}R_\alpha^jP_{a_m}.
\end{equation}

\begin{proposition}\label{prop:protected-density}
For every $m$,
\[
 \rho_m(y)\ge1\qquad(y\in\T\setminus\mathcal P_m).
\]
Moreover,
$(E_m^{\mathrm{src}}\cup E_m^{\mathrm{tgt}})\cap\mathcal P_m=\varnothing$.
\end{proposition}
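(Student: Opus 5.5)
Both claims go by induction on $m$, using the chain rule for the density, $\rho_{h_{m+1}}(y)=\rho_{M_m}(y)\,\rho_m(M_m^{-1}y)$, which follows from $h_{m+1}=M_mh_m$. The base case is $h_0=\id$, where $\rho_0\equiv1$. For the inductive step, take $y\notin\mathcal P_{m+1}$ and set $x=M_m^{-1}(y)$. We need to show two things: $M_m'(x)\ge1$, and $\rho_m(x)\ge1$.

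\emph{First factor.} Away from the tower, $M_m$ is the identity, so $M_m'(x)=1$. On a level $R_\alpha^jJ_{a_m}$, the derivative is $M_m'(x)=G_{a_m,b_m}'(\sigma_{m,j};R_\alpha^{-j}x)$. If this were $<1$, then \eqref{eq:batched-protection} would give $R_\alpha^{-j}y\in P_{b_m}=P_{a_{m+1}}$, i.e. $y\in R_\alpha^jP_{a_{m+1}}$. When $|j|\le N_{m+1}=T_m$ this contradicts $y\notin\mathcal P_{m+1}$. When $T_m<|j|\le T_m+2$ we have $\sigma_{m,j}=0$, so $G=\id$ and the derivative is $1$. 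Hence $M_m'(x)\ge1$ in all cases.

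\emph{Second factor.} By the induction hypothesis it suffices to show $x\notin\mathcal P_m$. Suppose instead $x\in R_\alpha^jP_{a_m}$ with $|j|\le N_m$. Then $\sigma_{m,j}=w_m$, so $M_m$ acts on this level as the full composite. Lemma~\ref{lem:batched-squeeze} sends $P_{a_m}$ onto $P_{b_m}$, so $y=M_m(x)\in R_\alpha^jP_{a_{m+1}}\subset\mathcal P_{m+1}$, since $|j|\le N_m\le T_m$. This contradicts the choice of $y$, so $\rho_m(x)\ge1$ and the induction closes.

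\emph{Disjointness.} The sets $E_m^{\mathrm{src}}$ and $E_m^{\mathrm{tgt}}$ are unions of levels $R_\alpha^kJ_{a_m}$ with $k\in\mathcal I_m\cup(\mathcal I_m+1)$. For $k\ge0$ these indices satisfy $|k|\ge N_m+1$. On the negative side, sources have $k\le-N_m-2$ and targets have $k\le-N_m-1$. So every such $k$ has $N_m<|k|\le T_m+1$, while the cores in $\mathcal P_m$ sit in levels with $|j|\le N_m$. By \eqref{eq:tower-separation}, the intervals $R_\alpha^kJ_{a_m}$ with $|k|\le T_m+2$ have disjoint closures, and $P_{a_m}\subset J_{a_m}$. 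Hence the two sets do not meet.

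The main obstacle is the index bookkeeping in the first-factor step. One has to check that a contraction on a level $|j|>N_{m+1}$ cannot occur: such levels have zero phase, and $T_m=N_{m+1}$ exactly matches the range of the new protected block. One also has to justify the separation used there, namely that $P_{a_{m+1}}\subset J_{a_m}$ makes the levels at stage $m+1$ for $|j|\le T_m$ sit inside the stage-$m$ tower.
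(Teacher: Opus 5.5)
Your proof is correct and follows the paper's argument. You use the same density recursion $\rho_{m+1}(M_m z)=M_m'(z)\rho_m(z)$, the same two checks (the full composite on protected cores, and batched protection together with vanishing phase for $|j|>T_m=N_{m+1}$), and the same index count with padded separation for disjointness. The extra separation check you flag at the end is not needed, since $R_\alpha^jP_{a_{m+1}}\subset\mathcal P_{m+1}$ for $|j|\le N_{m+1}$ holds by the definition of $\mathcal P_{m+1}$.
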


\begin{proof}
The assertion is immediate for $m=0$.  Suppose it holds at stage $m$.
If $y=M_m(z)$, then \eqref{eq:hmqm} gives
\begin{equation}\label{eq:rho-rec}
 \rho_{m+1}(y)=M_m'(z)\rho_m(z).
\end{equation}
If $z\in\mathcal P_m$, the levelwise definition \eqref{eq:Mm} applies the full
composite, which sends
$P_{a_m}$ to $P_{b_m}=P_{a_{m+1}}$; hence
$M_m(z)\in\mathcal P_{m+1}$.
Thus $y\notin\mathcal P_{m+1}$ implies $z\notin\mathcal P_m$ and
$\rho_m(z)\ge1$.
If also $M_m'(z)<1$, then the coefficient on its tower level is positive,
so the level index has absolute value at most $T_m=N_{m+1}$.  The composite
protection \eqref{eq:batched-protection} places $M_m(z)$ in the corresponding
translate of $P_{b_m}$, again inside $\mathcal P_{m+1}$.  This contradiction gives
$M_m'(z)\ge1$, and \eqref{eq:rho-rec} closes the induction.

Finally, an active pair indexed by $j$ satisfies
$|j|\ge N_m+1$ and $|j+1|\ge N_m+1$, by the plateau condition.
A plateau endpoint may participate in an active pair, but neither its
source nor its target has index of absolute value at most $N_m$.
Padded separation therefore makes both levels disjoint from
$\mathcal P_m$ in \eqref{eq:protected-region}, proving the final claim.
\end{proof}

\begin{lemma}\label{lem:stabilization}
For each $j\in\Z$, choose $m$ so large that $|j|\le N_m$ and set
\[
 I_j=h_m^{-1}(R_\alpha^jP_{a_m}).
\]
This interval is independent of the sufficiently large choice of $m$.
The family $(I_j)_{j\in\Z}$ consists of pairwise disjoint nondegenerate
intervals, and $g_m(I_j)=I_{j+1}$ for every sufficiently large $m$.
\end{lemma}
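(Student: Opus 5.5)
We first establish independence of $m$. Take $m$ with $|j|\le N_m$; we show $h_{m+1}^{-1}(R_\alpha^jP_{a_{m+1}})=h_m^{-1}(R_\alpha^jP_{a_m})$. Since $h_{m+1}=M_mh_m$, it suffices to prove
\[
 M_m(R_\alpha^jP_{a_m})=R_\alpha^jP_{a_{m+1}}.
\]
Because $|j|\le N_m$, the level $R_\alpha^jJ_{a_m}$ lies in the old plateau, so $\sigma_{m,j}=w_m$. By \eqref{eq:Mm}, $M_m$ acts there as $R_\alpha^jG_{a_m,b_m}(w_m)R_\alpha^{-j}$. Lemma~\ref{lem:batched-squeeze} gives $G_{a_m,b_m}(w_m)(P_{a_m})=P_{b_m}=P_{a_{m+1}}$, which is exactly the claimed identity. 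Induction in $m$ then gives independence for all later $m$. Nondegeneracy is clear: $P_{a_m}$ is a nondegenerate interval and $h_m$ is a homeomorphism.

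For disjointness, fix $j\ne k$ and choose one $m$ with $|j|,|k|\le N_m$. The intervals $R_\alpha^jP_{a_m}$ and $R_\alpha^kP_{a_m}$ are disjoint, because $P_{a_m}\subset J_{a_m}$ and the levels $R_\alpha^{i}J_{a_m}$, $|i|\le T_m+2$, have pairwise disjoint closures by \eqref{eq:tower-separation}. Pulling back by the bijection $h_m$ preserves disjointness.

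For the dynamics, take $m$ with $|j|+1\le N_m$. Then both $I_j$ and $I_{j+1}$ are computed with the same $m$. We have
\[
 g_m(I_j)=h_m^{-1}R_\alpha h_m h_m^{-1}(R_\alpha^jP_{a_m})=h_m^{-1}(R_\alpha^{j+1}P_{a_m})=I_{j+1}.
\]
Since $N_m=T_{m-1}$ is strictly increasing, this holds for all sufficiently large $m$.

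The only delicate point is the inductive identity $M_m(R_\alpha^jP_{a_m})=R_\alpha^jP_{a_{m+1}}$. For it we need that $R_\alpha^jP_{a_m}$ lies entirely within the single level $R_\alpha^jJ_{a_m}$, where the levelwise formula applies, and that this level is untouched by the definitions on other levels. Both facts follow from the disjointness in \eqref{eq:tower-separation} together with $P\subset(-A,A)$. We also need that $N_m+1$ belongs to the plateau range, so that the phase there is exactly $w_m$; this is stated immediately after \eqref{eq:tower-phase}.
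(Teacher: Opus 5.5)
Your proof is correct and follows the paper's own argument: the plateau identity $M_m(R_\alpha^jP_{a_m})=R_\alpha^jP_{a_{m+1}}$ from the full composite, pulled back through $h_{m+1}=M_mh_m$, then disjointness, nondegeneracy and $g_m(I_j)=I_{j+1}$ checked at one common late stage. You simply make explicit the details that the paper leaves terse, namely $P_{a_m}\subset J_{a_m}$, the disjoint closures of the tower levels, and the monotonicity of $N_m$.
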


The commuting diagram in Figure~\ref{fig:fixed-pullback-density}
explains the independence of the physical pullback from the stage.

\begin{figure}[!htbp]
\centering
\begin{tikzpicture}[x=.97cm,y=.9cm,font=\small,>=Latex]
  \node[anchor=west,font=\bfseries] at (0,3.4) {(a) Fixed pullback, shrinking core};
  \draw[nipl,fill=coregreen] (.2,2.25) rectangle (2.2,2.55);
  \draw[nipl,fill=coregreen] (.2,.7) rectangle (2.2,1);
  \node at (1.2,2.95) {$I_j$};
  \node at (1.2,.3) {$I_j$};
  \draw[nipa] (1.2,2.1) -- (1.2,1.15) node[midway,left] {$\id$};
  \draw[nipl,fill=coregreen] (4.1,2.25) rectangle (6.1,2.55);
  \draw[nipl,fill=coregreen] (4.6,.7) rectangle (5.6,1);
  \node at (5.1,2.95) {$R_\alpha^jP_{a_m}$};
  \node at (5.1,.3) {$R_\alpha^jP_{a_{m+1}}$};
  \draw[nipa] (2.4,2.4) -- (3.9,2.4) node[midway,above] {$h_m$};
  \draw[nipa] (2.4,.85) -- (4.35,.85) node[midway,above] {$h_{m+1}$};
  \draw[nipa] (5.1,2.1) -- (5.1,1.15) node[midway,right] {$M_m$};
  \node[anchor=west,font=\bfseries] at (7.7,3.4) {(b) Protection of the density};
  \node at (11,2.7) {$\rho_{m+1}(M_mz)=M_m'(z)\rho_m(z)$};
  \node[align=center] at (11,1.75)
    {$M_m'(z)<1$\\$\Longrightarrow\ M_m(z)\in\mathcal P_{m+1}$};
  \node[nipbox,text width=5.4cm] at (11,.6)
    {active source and target regions\\$\rho_m\ge1$};
  \node at (7,-.45) {$h_{m+1}=M_mh_m$, \qquad $|j|\le N_m$};
\end{tikzpicture}
\caption{A shrinking core with a fixed physical pullback. For a protected
orbit index $j$, the full squeeze sends $R_\alpha^jP_{a_m}$ onto
$R_\alpha^jP_{a_{m+1}}$, while $h_{m+1}=M_mh_m$ keeps $I_j$ unchanged.
Contraction images lie in the new protected block; the active source and
target regions consequently retain $\rho_m\ge1$. The cores shrink while
the set of protected orbit indices grows. Interval widths are schematic.}
\label{fig:fixed-pullback-density}
\end{figure}

\begin{proof}
On the plateau, $M_m$ uses the full composite, and therefore
\[
 M_m(R_\alpha^jP_{a_m})=R_\alpha^jP_{b_m}
 =R_\alpha^jP_{a_{m+1}}.
\]
Taking inverse images under $h_{m+1}=M_mh_m$ proves stabilization.
Pairwise disjointness and nondegeneracy follow at one common late stage.
Once $j$ and $j+1$ are protected,
$g_m(I_j)=h_m^{-1}R_\alpha(R_\alpha^jP_{a_m})=I_{j+1}$.
\end{proof}

Since $I_0=P_{a_0}$ while $|h_m(I_0)|=|P_{a_m}|\to0$, the construction
genuinely blows up the rotation orbit of $\theta_0$.
\section{Hamiltonian realization in a shrinking boundary collar}\label{sec:hamiltonian}

Let
\[
 \mathbb A=\T\times[0,1],\qquad \omega=dx\wedge ds,
\]
where $s=0$ is the outer boundary, and write
$\mathcal R_\alpha(x,s)=(x+\alpha,s)$.  Fix a smooth cutoff
$\chi:[0,\infty)\to[0,1]$ which equals one on $[0,\mathfrak b]$ and
vanishes near $[1,\infty)$, with $\mathfrak b$ as in \eqref{eq:b-choice}.

At the micro-scale $r$, define
\[
 V_{r,\delta_r}(x)
 =\delta_rV_r\!\left(\frac{x-\theta_0}{\delta_r}\right)
\]
on $J_r$, with zero extension outside, and set
\begin{equation}\label{eq:scaled-Hamiltonian}
 K_r(x,s)
 =sV_{r,\delta_r}(x)\chi(s/\delta_r)
 =\delta_r^2\widehat K_r\!\left(
 \frac{x-\theta_0}{\delta_r},\frac{s}{\delta_r}
 \right),
\end{equation}
where $\widehat K_r(z,w)=wV_r(z)\chi(w)$.  We use the Hamiltonian convention
\[
 X_H=(\partial_sH,-\partial_xH)
\]
for $dx\wedge ds$, and denote the flow by $\Lambda_{r,t}$.

The one-dimensional squeeze has the following area-preserving Hamiltonian realization.

\begin{lemma}\label{lem:ham-squeeze}
The flow $\Lambda_{r,t}$ is a smooth area-preserving diffeomorphism supported
in $J_r\times[0,\delta_r]$, and
\begin{equation}\label{eq:ham-boundary}
 \Lambda_{r,t}(x,0)=(L_{r,t}(x),0).
\end{equation}
On every orbit segment contained in the \emph{exact cotangent zone}
$0\le s\le\mathfrak b\delta_r$,
\begin{equation}\label{eq:cotangent-local}
 \Lambda_{r,t}(x,s)
 =\left(L_{r,t}(x),\frac{s}{L_{r,t}'(x)}\right).
\end{equation}
Moreover, there are constants $C_{\mathrm{ham},\ell}>0$, $0\le\ell\le3$,
depending only on the fixed one-dimensional squeeze
data chosen in Lemma \ref{lem:protected-squeeze} and on $\chi$, such that,
uniformly in the micro-scale $r$ and for $|u|\le1$,
\begin{align}
 \|\Lambda_{r,u}-\id\|_{C^0}&\le C_{\mathrm{ham},0}\delta_r|u|,\label{eq:ham-scale-0}\\
 \|D\Lambda_{r,u}-I\|_{C^0}&\le C_{\mathrm{ham},1}|u|,\label{eq:ham-scale-1}\\
 \|D^2\Lambda_{r,u}\|_{C^0}&\le C_{\mathrm{ham},2}\delta_r^{-1}|u|,\label{eq:ham-scale-2}\\
 \|D^3\Lambda_{r,u}\|_{C^0}&\le C_{\mathrm{ham},3}\delta_r^{-2}|u|.\label{eq:ham-scale-3}
\end{align}
Finally, for $0\le u\le t\le1$,
\begin{equation}\label{eq:derivative-ratio-variable}
 \frac{L_{r,t}'(x)}{L_{r,u}'(x)}\le\lambda_r.
\end{equation}
\end{lemma}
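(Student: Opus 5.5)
The plan is to work entirely with the explicit Hamiltonian and the rescaling identity in \eqref{eq:scaled-Hamiltonian}.

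\emph{Support, area preservation and the boundary.} The vector field is
\[
X_{K_r}=\bigl(V_{r,\delta_r}(x)\,\partial_s(s\chi(s/\delta_r)),\,-sV_{r,\delta_r}'(x)\chi(s/\delta_r)\bigr).
\]
It is smooth and vanishes outside $J_r\times[0,\delta_r)$, since $V_r$ vanishes near $\partial(-A_r,A_r)$ and $\chi$ vanishes near $1$. Its second component vanishes at $s=0$, so $\{s=0\}$ is invariant, and there the flow reduces to $\dot x=V_{r,\delta_r}(x)$. This flow is the rescaled $\Phi_r^t$, namely $L_{r,t}$, which gives \eqref{eq:ham-boundary}. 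Area preservation is automatic for a Hamiltonian flow. Completeness on the annulus follows from compact support, together with tangency to $s=0$ (the outer boundary).

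\emph{Cotangent formula.} In the zone $s\le\mathfrak b\delta_r$ we have $\chi\equiv1$, so $K_r=sV_{r,\delta_r}(x)$ and the equations become
\[
\dot x=V_{r,\delta_r}(x),\qquad \dot s=-sV_{r,\delta_r}'(x).
\]
Along a segment staying in the zone, $x(t)=L_{r,t}(x)$. The $s$-equation is linear, so
\[
s(t)=s\exp\Bigl(-\int_0^tV'_{r,\delta_r}(L_{r,\tau}x)\,d\tau\Bigr)=s/L_{r,t}'(x),
\]
using the variational identity from the proof of Lemma~\ref{lem:protected-squeeze}. Equivalently, $sV(x)$ is conserved, which forces $s\,L'=\text{const}$. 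This is the lift of $L_{r,t}$ to the cotangent bundle.

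\emph{Scaling estimates.} Conjugate by the dilation $\Sigma(z,w)=(\theta_0+\delta_rz,\delta_rw)$. Since $\Sigma$ multiplies $\omega$ by $\delta_r^2$ and $K_r=\delta_r^2\widehat K_r\circ\Sigma^{-1}$, we get $\Lambda_{r,t}=\Sigma\circ\widehat\Lambda_{r,t}\circ\Sigma^{-1}$, where $\widehat\Lambda$ is the time-$t$ flow of $\widehat K_r(z,w)=wV_r(z)\chi(w)$ for the standard form. The key step is uniformity in $r$. The field $X_{\widehat K_r}$ has $C^\ell$ norms for $\ell\le4$ bounded by constants depending only on $C_{\mathrm{sq},\ell}$ and $\chi$. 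This uses Lemma~\ref{lem:protected-squeeze}(S4): $\|V_r\|_{C^\ell}$ is uniformly bounded for $1\le\ell\le5$, $|V_r|\le1$, and $w\in[0,1]$ on the support. The supports in $z$ grow like $A_r$, but only sup norms enter. Gronwall and the variational equations on $|u|\le1$ then give
\[
\|\widehat\Lambda_u-\id\|_{C^0},\ \|D\widehat\Lambda_u-I\|,\ \|D^\ell\widehat\Lambda_u\|\le C|u|\qquad(\ell=2,3),
\]
with the factor $|u|$ coming exactly as in (S4). Undoing $\Sigma$ multiplies the $\ell$-th derivative by $\delta_r^{1-\ell}$, which yields \eqref{eq:ham-scale-0}--\eqref{eq:ham-scale-3}. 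The main care point is this uniformity, since $X_{\widehat K}$ involves $V_r'$, which needs $V_r\in C^5$ for third derivatives of the flow. That regularity is available.

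\emph{Derivative ratio.} For $0\le u\le t\le1$, the group law gives $L_{r,t}=L_{r,t-u}\circ L_{r,u}$, so
\[
\frac{L_{r,t}'(x)}{L_{r,u}'(x)}=L_{r,t-u}'(L_{r,u}x)\le\lambda_r
\]
by \eqref{eq:L-first}, applied with $t-u\in[0,1]$.
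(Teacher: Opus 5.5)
Your proposal is correct and follows essentially the same route as the paper: explicit Hamilton equations on $s=0$ and in the zone where $\chi\equiv1$, the linear $s$-equation solved via the variational identity, and uniform flow bounds for $\widehat K_r$ from (S4) transported by the dilation $(z,w)\mapsto(\theta_0+\delta_r z,\delta_r w)$. The only deviations are harmless. You obtain \eqref{eq:derivative-ratio-variable} from the group law and \eqref{eq:L-first} instead of integrating $V'_{r,\delta_r}\le\varepsilon_r/2$ from $u$ to $t$. Your aside that conservation of $sV(x)$ ``forces'' $sL'=\text{const}$ gives no information at zeros of $V$ (e.g.\ at $\theta_0$), so the linear-ODE argument is the one to rely on.
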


\begin{proof}
The support and symplectic assertions follow from \eqref{eq:scaled-Hamiltonian}.  On $s=0$ the Hamilton equations are $\dot x=V_{r,\delta_r}(x)$ and $\dot s=0$, proving \eqref{eq:ham-boundary}.  Where $\chi\equiv1$ they become
\[
 \dot x=V_{r,\delta_r}(x),\qquad
 \dot s=-sV_{r,\delta_r}'(x).
\]
Thus $x(t)=L_{r,t}(x)$, and the one-dimensional variational equation gives
$s(t)L_{r,t}'(x)=s(0)$, which is \eqref{eq:cotangent-local}.

By Lemma~\ref{lem:protected-squeeze}, the functions $V_r$ have uniform
fixed-order bounds.  The standard variational equations therefore give
uniform bounds through third spatial order.  Conjugation by
$(z,w)\mapsto(\theta_0+\delta_rz,\delta_rw)$ gives
\eqref{eq:ham-scale-0}--\eqref{eq:ham-scale-3}.  The factor $|u|$ follows
from the identity map at time zero.

Finally,
\[
 \frac{L_{r,t}'(x)}{L_{r,u}'(x)}
 =\exp\!\left(\int_u^t
 V_{r,\delta_r}'(L_{r,v}(x))\,dv\right)
 \le e^{\varepsilon_r(t-u)/2}\le\lambda_r.
\]
\end{proof}

For $r\in[a_m,b_m)$ and $j\in\Z$, set
\[
 c_{m,r,j}:=\tau_{a_m,b_m,r}(\sigma_{m,j}).
\]
We define a Hamiltonian diffeomorphism $\mathcal N_{m,r}$ by
\begin{equation}\label{eq:micro-Hamiltonian-factor}
 \mathcal N_{m,r}|_{\mathcal R_\alpha^j(J_r\times[0,\delta_r])}
 =\mathcal R_\alpha^j\Lambda_{r,c_{m,r,j}}\mathcal R_\alpha^{-j},
 \qquad |j|\le T_m+2,
\end{equation}
and by the identity elsewhere.  Indeed, \eqref{eq:micro-Hamiltonian-factor}
is the time-one map of the finite sum
\[
 \sum_{|j|\le T_m+2}c_{m,r,j}\,
 K_r\circ\mathcal R_\alpha^{-j};
\]
the summands have disjoint supports by \eqref{eq:tower-separation}.  We then
set
\begin{equation}\label{eq:batched-Hamiltonian-map}
 \mathcal M_m
 :=\mathcal N_{m,b_m-1}\circ\cdots\circ\mathcal N_{m,a_m}.
\end{equation}
Thus $\mathcal M_m$ is a smooth Hamiltonian diffeomorphism and its boundary
restriction is $M_m$.  This factorization is important: the auxiliary path
obtained by replacing $\sigma_{m,j}$ with a real parameter is only piecewise
smooth at the phase boundaries and is not used as a Hamiltonian isotopy.

\begin{definition}\label{def:active-rectangles}
The strip $\T\times[0,\delta_r]$ is the scale-$r$ \emph{boundary collar},
and $Q_r=J_r\times[0,\delta_r]$ is its local \emph{support rectangle}.
The substrip $\T\times[0,\mathfrak b\delta_r]$, where the cutoff equals
one, is the \emph{exact cotangent zone}; its part inside $Q_r$ is
$J_r\times[0,\mathfrak b\delta_r]$.  Formula
\eqref{eq:cotangent-local} requires the entire orbit segment to remain
in the exact zone.  For $j\in\mathcal I_{m,r}$, the \emph{active source rectangle}
and its \emph{corresponding active target rectangle} are
\[
 Q_{m,r,j}^{\mathrm{src}}=\mathcal R_\alpha^jQ_r,
 \qquad
 Q_{m,r,j}^{\mathrm{tgt}}=\mathcal R_\alpha^{j+1}Q_r.
\]
Their unions over all active pairs are denoted by
$\mathcal E_m^{\mathrm{src}}$ and $\mathcal E_m^{\mathrm{tgt}}$.
These rectangles lie in rotation coordinates.  Their \emph{physical
source and target components} are, respectively,
$\mathcal H_m^{-1}(Q_{m,r,j}^{\mathrm{src}})$ and
$\mathcal H_m^{-1}(Q_{m,r,j}^{\mathrm{tgt}})$, where the cumulative
coordinate change $\mathcal H_m$ is defined in \eqref{eq:2d-HF} below.
\end{definition}

Set
\begin{equation}\label{eq:2d-HF}
 \mathcal H_0=\id,\qquad
 \mathcal H_{m+1}=\mathcal M_m\mathcal H_m,\qquad
 F_m=\mathcal H_m^{-1}\mathcal R_\alpha\mathcal H_m.
\end{equation}
On the outer boundary,
\begin{equation}\label{eq:boundary-stage-identification}
 \mathcal H_m(x,0)=(h_m(x),0),\qquad
 F_m(x,0)=(g_m(x),0).
\end{equation}

For notational use in the moving-front identity, define the composite
\[
 \boldsymbol G_{a,b}(s)
 =\Lambda_{b-1,\tau_{a,b,b-1}(s)}\circ\cdots\circ
 \Lambda_{a,\tau_{a,b,a}(s)}.
\]

\begin{lemma}\label{lem:2d-moving-front}
Let $\mathcal S_m=\mathcal M_m^{-1}\mathcal R_\alpha\mathcal M_m$.
On a source level with $-T_m-1\le j\le T_m$,
\begin{equation}\label{eq:2d-moving-front}
 \mathcal S_m
 =\mathcal R_\alpha^{j+1}
 \boldsymbol G_{a_m,b_m}(\sigma_{m,j+1})^{-1}
 \boldsymbol G_{a_m,b_m}(\sigma_{m,j})
 \mathcal R_\alpha^{-j}.
\end{equation}
For $j\in\mathcal I_{m,r}$ the middle quotient is
$\Lambda_{r,\pm d_{m,r}^{-1}}$.  Both maps carry
$Q_{m,r,j}^{\mathrm{src}}$ onto $Q_{m,r,j}^{\mathrm{tgt}}$, and on this
source rectangle
\begin{align}
 \|\mathcal S_m-\mathcal R_\alpha\|_{C^0}
 &\le C_{\mathrm{ham},0}\frac{\delta_r}{d_{m,r}},\label{eq:2d-defect-0}\\
 \|D\mathcal S_m-I\|_{C^0}
 &\le C_{\mathrm{ham},1}\frac1{d_{m,r}},\label{eq:2d-defect-1}\\
 \|D^2\mathcal S_m\|_{C^0}
 &\le C_{\mathrm{ham},2}\frac{\delta_r^{-1}}{d_{m,r}}.
 \label{eq:2d-defect-2}
\end{align}
The defect is supported in $\mathcal E_m^{\mathrm{src}}$.
\end{lemma}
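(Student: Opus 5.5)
The plan is to follow the proof of Lemma~\ref{lem:moving-front} essentially verbatim, with the Hamiltonian factors $\Lambda_{r,t}$ replacing the circle flows $L_{r,t}$. There is one real complication. $\mathcal M_m$ is defined through the factorization \eqref{eq:batched-Hamiltonian-map}, not levelwise, so the first step is a levelwise formula for $\mathcal M_m$.

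\textbf{Step 1: levelwise formula for $\mathcal M_m$.} On $\mathcal R_\alpha^j Q_{a_m}$ with $|j|\le T_m+2$ we want
\[
\mathcal M_m=\mathcal R_\alpha^j\boldsymbol G_{a_m,b_m}(\sigma_{m,j})\mathcal R_\alpha^{-j}.
\]
Each $\mathcal N_{m,r}$ acts on $\mathcal R_\alpha^j(J_r\times[0,\delta_r])$ as $\mathcal R_\alpha^j\Lambda_{r,c_{m,r,j}}\mathcal R_\alpha^{-j}$. The supports are nested: $\overline{J_{r+1}}\subset J_r$ and $\delta_{r+1}<\delta_r$, so $Q_{r+1}\subset Q_r\subset Q_{a_m}$. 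Each $\Lambda_{r,t}$ preserves $Q_r$, being a compactly supported flow there. Hence the factor at scale $r$ maps the level-$j$ translate of $Q_{a_m}$ into itself. The separation condition \eqref{eq:tower-separation} keeps translates with different $j$ disjoint, so no factor moves points from one level to another. Composing in the order of \eqref{eq:batched-Hamiltonian-map} gives the displayed formula, since $c_{m,r,j}=\tau_{a_m,b_m,r}(\sigma_{m,j})$. Outside the union of these translates $\mathcal M_m$ is the identity.

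\textbf{Step 2: identity \eqref{eq:2d-moving-front}.} Take a point $p$ of the level-$j$ rectangle with $-T_m-1\le j\le T_m$. Then $\mathcal M_m(p)$ lies on level $j$, and $\mathcal R_\alpha$ moves it to level $j+1$, where $|j+1|\le T_m+2$. Applying the Step 1 formula at level $j+1$ for $\mathcal M_m^{-1}$ gives \eqref{eq:2d-moving-front}.

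\textbf{Step 3: the middle quotient.} Let $j\in\mathcal I_{m,r}$. Then $\sigma_{m,j}$ and $\sigma_{m,j+1}$ lie in the closed scale-$r$ phase and differ by $d_{m,r}^{-1}$. Throughout that phase
\[
\boldsymbol G_{a_m,b_m}(s)=\boldsymbol{\mathcal L}_r\circ\Lambda_{r,s-(b_m-1-r)},
\]
where $\boldsymbol{\mathcal L}_r$ is the composite of the time-one maps $\Lambda_{r',1}$ over the finer scales $r'>r$, and the coarser factors are the identity. Cancelling $\boldsymbol{\mathcal L}_r$ and using the group law gives the quotient $\Lambda_{r,\sigma_{m,j}-\sigma_{m,j+1}}=\Lambda_{r,\pm d_{m,r}^{-1}}$. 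The sign is $+$ on the positive front and $-$ on the negative front, as for Lemma~\ref{lem:batched-squeeze}.

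\textbf{Step 4: estimates, mapping, support.} On the source rectangle, $\mathcal S_m=\mathcal R_\alpha^{j+1}\Lambda_{r,u}\mathcal R_\alpha^{-j}$ with $|u|=d_{m,r}^{-1}$. Since $\mathcal R_\alpha$ is an isometric translation,
\[
\mathcal S_m-\mathcal R_\alpha=\mathcal R_\alpha^{j+1}(\Lambda_{r,u}-\id)\mathcal R_\alpha^{-j},
\]
and the derivatives are conjugated by the identity matrix. The bounds \eqref{eq:ham-scale-0}--\eqref{eq:ham-scale-2} then give \eqref{eq:2d-defect-0}--\eqref{eq:2d-defect-2} directly. $\Lambda_{r,u}$ maps $Q_r$ onto itself, so $\mathcal S_m$ carries $Q^{\mathrm{src}}_{m,r,j}$ onto $\mathcal R_\alpha^{j+1}Q_r=Q^{\mathrm{tgt}}_{m,r,j}$, exactly as $\mathcal R_\alpha$ does.

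For the support claim, inactive pairs have $\sigma_{m,j}=\sigma_{m,j+1}$, so the quotient is the identity and $\mathcal S_m=\mathcal R_\alpha$ there. Off all levels $|j|\le T_m+2$ we have $\mathcal M_m=\id$. A point off the levels whose image under $\mathcal R_\alpha$ lands on a level needs separate treatment. Level $-T_m-2$ has $\sigma=0$, so $\mathcal M_m$ is the identity there, and an index $j\ge T_m+2$ maps to an index with $\sigma=0$. So in both cases $\mathcal S_m=\mathcal R_\alpha$.

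For an active pair there is one more subtlety: the quotient is $\Lambda_{r,\cdot}$, whose support is $Q_r$ and not all of $Q_{a_m}$. So on level $j$ the defect lies in $\mathcal R_\alpha^jQ_r=Q^{\mathrm{src}}_{m,r,j}\subset\mathcal E_m^{\mathrm{src}}$.

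\textbf{Main obstacle.} The delicate step is Step 1. One must check that every Hamiltonian factor preserves each level's support rectangle and never leaks between levels, so that the global composition really reduces to the levelwise composite $\boldsymbol G$. Beyond that, the argument is bookkeeping inherited from the circle case.
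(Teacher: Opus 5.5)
Your proposal is correct and follows the paper's route: the paper's proof is a compressed version of your Steps 1--4, namely the levelwise reduction of $\mathcal M_m$ through the factorization \eqref{eq:batched-Hamiltonian-map} and tower separation, phase cancellation down to the single flow $\Lambda_{r,\pm d_{m,r}^{-1}}$, and transport of the scaled bounds \eqref{eq:ham-scale-0}--\eqref{eq:ham-scale-2} through the isometry $\mathcal R_\alpha$. The edge cases in your support argument are stated loosely, and the cleanest fix is to note that $\sigma_{m,j}=0$ for every $|j|\ge T_m+1$. Then $\mathcal M_m$ is the identity off $\bigcup_{|j|\le T_m}\mathcal R_\alpha^jQ_{a_m}$, so $\mathcal S_m=\mathcal R_\alpha$ off the source levels $-T_m-1\le j\le T_m$, which handles the boundary levels $-T_m-2$, $T_m+1$, $T_m+2$ at once (your phrase ``$j\ge T_m+2$'' does not literally cover $T_m+1$).
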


\begin{proof}
Formula \eqref{eq:2d-moving-front} follows from
\eqref{eq:batched-Hamiltonian-map} exactly
as in Lemma~\ref{lem:moving-front}.  Phase alignment cancels all but one
micro-flow.  The support and image assertions follow from
Lemma~\ref{lem:ham-squeeze}, and its estimates
\eqref{eq:ham-scale-0}--\eqref{eq:ham-scale-2} give
\eqref{eq:2d-defect-0}--\eqref{eq:2d-defect-2}.
\end{proof}

\subsection{Nested cotangent zones}

Although a macro-stage contains several micro-factors, each one is met in
its exact cotangent zone when the factors are traced individually.

\begin{lemma}\label{lem:nested-cotangent}
For every $m\ge0$, on $0\le s\le\delta_{a_m}$,
\begin{equation}\label{eq:Hm-inverse-cotangent}
 \mathcal H_m^{-1}(y,s)
 =\left(h_m^{-1}(y),s\rho_m(y)\right),
 \qquad \rho_m(y)=h_m'(h_m^{-1}(y)).
\end{equation}
Equivalently, on the preimage of this strip,
\begin{equation}\label{eq:Hm-cotangent}
 \mathcal H_m(x,s)
 =\left(h_m(x),\frac{s}{h_m'(x)}\right).
\end{equation}
\end{lemma}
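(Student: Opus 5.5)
Induction on $m$, proving the equivalent forward form \eqref{eq:Hm-cotangent}; the inverse form \eqref{eq:Hm-inverse-cotangent} then follows by substituting $y=h_m(x)$ and using $1/(h_m^{-1})'(y)=h_m'(x)=\rho_m(y)$. For $m=0$ both sides are the identity. For the induction step, write $\mathcal H_{m+1}=\mathcal M_m\mathcal H_m$ and $\mathcal M_m=\mathcal N_{m,b_m-1}\circ\cdots\circ\mathcal N_{m,a_m}$. Since $\delta_{a_{m+1}}\le\delta_{a_m}$, on the preimage of $0\le s\le\delta_{a_{m+1}}$ we would like to apply the inductive hypothesis, obtaining $\mathcal H_m(x,s)=(h_m(x),s/h_m'(x))$. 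Then we would push this point through the factors $\mathcal N_{m,r}$ one at a time. On each tower level every factor is a conjugated $\Lambda_{r,c}$, and \eqref{eq:cotangent-local} converts it into $(y,s')\mapsto(L_{r,c}(y),s'/L_{r,c}'(y))$. Composing these factors with the chain rule, and using that $M_m$ is the levelwise composite $G$, gives $(h_{m+1}(x),s/h_{m+1}'(x))$. Away from the tower supports every factor is the identity, so the formula is trivially preserved there.

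The real content is verifying that each orbit segment stays inside the exact zone $s\le\mathfrak b\delta_r$ when the factor $\mathcal N_{m,r}$ is applied. I would organize this as a claim. Before applying $\mathcal N_{m,r}$, the point has second coordinate $s\rho(y)^{-1}\cdot(\text{previous factors})$, that is, the original height divided by the density of the partial composite. I would bound the heights by running the induction in the inverse direction. Start from a target point $(y,s)$ with $s\le\delta_{a_{m+1}}$, or more precisely with $s\le\delta_{r+1}$ at the stage of factor $r$, and pull back.

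The main estimate uses the density bound. By Proposition~\ref{prop:protected-density} and the mechanism of \eqref{eq:batched-protection}, wherever a partial composite has derivative $<1$, the image lies in a finer protected core. I expect to show that along a forward segment the height grows by at most a factor $\kappa_{\mathrm{sq}}^{-1}$ only inside protected cores, and by at most $\lambda_r$ (via \eqref{eq:derivative-ratio-variable}) elsewhere. The choice $\overline\lambda/2<\mathfrak b$ together with $\delta_{r+1}=\delta_r/2$ should then guarantee the following. A point at height at most $\delta_{r+1}$ after the factor had height at most $\lambda_r\delta_{r+1}\le\overline\lambda\delta_r/2<\mathfrak b\delta_r$ along the whole segment, whenever $L'$ decreases only at the end. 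Since $s(t)=s(0)/L_{r,t}'$ and $L_{r,t}'/L_{r,u}'\le\lambda_r$, the height along the flow is at most $\lambda_r$ times the final height. This is exactly where \eqref{eq:derivative-ratio-variable} and \eqref{eq:b-choice} enter.

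So I would do the verification backwards. Let the final height be $s_{\rm fin}\le\delta_{a_{m+1}}$, or generally at most $\delta_{r+1}$ for the finer factors. Then the height throughout the scale-$r$ segment is at most $\lambda_r\,s_{\rm fin}\cdot(\text{product of later factor derivatives})$. Here I need the later factors $L_{r',1}$, $r'>r$, to contract heights at most down to the right scale. The main obstacle is this bookkeeping across several micro-factors within one macro-stage, together with controlling how $h_m'$ from earlier stages affects the starting height. I would handle it using the strip formulation: the statement is about $0\le s\le\delta_{a_m}$ in the target coordinates. Working backwards, each inverse factor $\Lambda_{r,c}^{-1}$ multiplies the height by $L'_{r,c}$ at the preimage point, which is at most $\lambda_r$. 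Starting from $s\le\delta_{a_{m+1}}=\delta_{b_m}$, the height before factor $r$ is at most $\prod_{r'>r}\lambda_{r'}\,\delta_{b_m}$. Because $\sup\lambda<3/2$ and the products are uniformly bounded, this should be at most $\mathfrak b\delta_r$ once $b_m-r\ge1$ (with $K_{\min}$ large, as in its choice). The height then stays below $\delta_{a_m}$ before $\mathcal H_m^{-1}$, so the inductive hypothesis applies.
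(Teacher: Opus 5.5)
Your final backward-tracing argument is correct and is essentially the paper's proof. You undo the micro-factors in decreasing scale order and use \eqref{eq:derivative-ratio-variable} to bound the height growth by $\lambda_r$ per inverse factor. Against the dyadic halving this keeps the heights below $(\overline\lambda/2)^{b_m-r}\delta_r<\mathfrak b\delta_r$, and you then compose the exact cotangent lifts. The paper makes one backward pass through all scales $r<a_m$ instead of inducting block by block, which gives the same estimate. Two remarks: the detour through Proposition~\ref{prop:protected-density} and protected cores is not needed, and you should state the first-exit continuity argument explicitly, since it is what justifies applying \eqref{eq:cotangent-local} along the whole segment before you know the segment stays in the exact zone.
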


Figure~\ref{fig:nested-exact-zones} shows the backward height bounds
used to keep every prior factor in its exact cotangent zone.

\begin{figure}[!htbp]
\centering
\begin{tikzpicture}[x=1cm,y=1cm,>=Latex,font=\small]
  \node[anchor=west,font=\bfseries] at (0,4.12)
    {(a) The exact cotangent zone};
  \draw[line width=.6pt,fill=gray!7] (.35,1.50) rectangle (4.35,3.50);
  \fill[coregreen] (.35,1.88) rectangle (4.35,3.50);
  \draw[line width=.6pt] (.35,3.50) -- (4.35,3.50);
  \draw[line width=.6pt,dashed] (.35,1.88) -- (4.35,1.88);
  \draw[-{Latex[length=2mm]},line width=.6pt] (.10,3.50) -- (.10,1.27);
  \node[anchor=west] at (4.48,3.50) {$s=0$};
  \node[anchor=west] at (4.48,1.88) {$s=\mathfrak b\delta_r$};
  \node[anchor=west] at (4.48,1.50) {$s=\delta_r$};
  \node[align=center] at (2.35,2.66)
    {$\chi=1$\\exact cotangent zone};
  \node at (2.35,1.68) {cutoff transition};
  \node at (2.35,1.20) {$Q_r=J_r\times[0,\delta_r]$};
  \node[align=center,text width=6.1cm] at (3.10,.60)
    {The full orbit segment must remain\\in the exact zone.};

  \node[anchor=west,font=\bfseries] at (6.80,4.12)
    {(b) Backward height control};
  \node[draw=black!65,fill=gray!7,rounded corners=1.5pt,
    inner sep=4pt,align=center,text width=1.65cm] (start) at (7.74,3.17)
    {start\\$s\le\delta_{a_m}$};
  \node[draw=black!65,fill=gray!7,rounded corners=1.5pt,
    inner sep=4pt,align=center,text width=2.15cm] (first) at (10.53,3.17)
    {undo scale\\$a_m-1$};
  \node[draw=black!65,fill=gray!7,rounded corners=1.5pt,
    inner sep=4pt,align=center,text width=1.75cm] (last) at (13.29,3.17)
    {undo scale\\$r<a_m$};
  \draw[-{Latex[length=2mm]},line width=.6pt] (start.east) -- (first.west);
  \draw[-{Latex[length=2mm]},line width=.6pt] (first.east) -- (last.west)
    node[midway,above] {$\cdots$};
  \node[anchor=north,align=center,text width=7.4cm] at (10.48,2.40)
    {Throughout the inverse factor at scale $r$,\\[4pt]
     $s\le\delta_{a_m}\prod_{v=r}^{a_m-1}\lambda_v$\\[4pt]
     $\le
       \left(\frac{\overline\lambda}{2}\right)^{a_m-r}\delta_r
       <\mathfrak b\delta_r$.};
\end{tikzpicture}
\caption{Nested exact cotangent zones. The backward chain undoes the
prior micro-factors in decreasing scale order. The displayed heights are
upper bounds, not trajectories. They exclude a first exit from each exact
zone, so the cotangent formulas compose along the whole relevant orbit.
All dimensions are schematic.}
\label{fig:nested-exact-zones}
\end{figure}

\begin{proof}
For $m=0$ both identities are immediate.  For $m\ge1$, trace a point
of height at most $\delta_{a_m}$ backward through every prior
micro-factor, in decreasing order of its scale index.  Immediately before
the factor at scale $r<a_m$ is undone, its height is at most
\[
 \delta_{a_m}\prod_{v=r+1}^{a_m-1}\lambda_v
 \le\frac12\left(\frac{\overline\lambda}{2}\right)^{a_m-r-1}
 \delta_r<\mathfrak b\delta_r.
\]
While that inverse factor is traversed, the derivative-ratio estimate
\eqref{eq:derivative-ratio-variable} gives
the strict bound
\[
 \delta_{a_m}\prod_{v=r}^{a_m-1}\lambda_v
 \le\left(\frac{\overline\lambda}{2}\right)^{a_m-r}\delta_r
 \le\frac{\overline\lambda}{2}\delta_r<\mathfrak b\delta_r.
\]
This also excludes a first exit from the exact zone.  Hence every factor is
an exact cotangent lift along the relevant orbit.  Cotangent lifts compose,
which proves \eqref{eq:Hm-inverse-cotangent}; inversion gives
\eqref{eq:Hm-cotangent}.
\end{proof}

\section{The batched Hamiltonian derivative ledger}\label{sec:master-ledger}

The estimates must be propagated through the micro-factors, even though
convergence will be counted one macro-stage at a time.

\subsection{Auxiliary micro-factorization}

Let $N_{m,r}$ be the boundary restriction of $\mathcal N_{m,r}$.  Define
auxiliary circle maps by $\widehat h_K=\id$ and
\begin{equation}\label{eq:auxiliary-micro-conjugacy}
 \widehat h_{r+1}=N_{m,r}\widehat h_r,
 \qquad a_m\le r<b_m.
\end{equation}
The consecutive blocks partition all $r\ge K$, so this recursion is
unambiguous.  Then $h_m=\widehat h_{a_m}$ and
$h_{m+1}=\widehat h_{b_m}$.

\begin{lemma}\label{lem:hk-higher-variable}
Set
\[
 B_{\mathrm{der}}:=\exp\!\left(\frac12\sum_{v=1}^{\infty}v^{-2}\right).
\]
For $\ell=2,3,4$, there are constants $C_{h,\ell}>0$, depending only on the
 fixed one-dimensional squeeze data chosen in Lemma \ref{lem:protected-squeeze}, such that, for every $r\ge K$,
\begin{equation}\label{eq:hk-higher-variable}
 \|\widehat h_r'\|_\infty\le B_{\mathrm{der}},
 \qquad
 \|\widehat h_r^{(\ell)}\|_\infty\le C_{h,\ell}\delta_r^{1-\ell}.
\end{equation}
These constants are independent of the block widths, tower heights, and
every admissible increase of $K$.
\end{lemma}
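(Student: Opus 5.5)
The plan is an induction on the micro-scale $r$ along the recursion \eqref{eq:auxiliary-micro-conjugacy}, using only the one-dimensional bounds \eqref{eq:L-first} and \eqref{eq:L1}--\eqref{eq:Lhigher} for the factors $N_{m,r}$. First I record the properties of $N_{m,r}$ that are needed. By \eqref{eq:ham-boundary} and \eqref{eq:micro-Hamiltonian-factor}, $N_{m,r}$ equals $R_\alpha^jL_{r,c_{m,r,j}}R_\alpha^{-j}$ on each level $R_\alpha^jJ_r$, and it is the identity elsewhere. The levels have disjoint closures, and each piece is stationary near the endpoints of its level, so $N_{m,r}$ is a smooth circle diffeomorphism. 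Since $c_{m,r,j}\in[0,1]$, the bounds \eqref{eq:L-first} and \eqref{eq:Lhigher} give the uniform estimates
\[
 0<N_{m,r}'\le\lambda_r,\qquad \|N_{m,r}^{(k)}\|_\infty\le C_{\mathrm{sq},k}\delta_r^{1-k}\quad(2\le k\le4).
\]
These are independent of $m$, of the tower data, and of $K$.

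For the first derivative, the chain rule applied to $\widehat h_{r+1}=N_{m,r}\circ\widehat h_r$ gives $\|\widehat h_{r+1}'\|_\infty\le\lambda_r\|\widehat h_r'\|_\infty$. Starting from $\widehat h_K=\id$, this yields
\[
 \|\widehat h_r'\|_\infty\le\prod_{v=K}^{r-1}\lambda_v=\exp\Bigl(\tfrac12\sum_{v=K}^{r-1}v^{-2}\Bigr)\le B_{\mathrm{der}}.
\]

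For $\ell=2,3,4$, I will apply Faà di Bruno's formula to $\widehat h_{r+1}^{(\ell)}$. The formula expresses it as $N_{m,r}'(\widehat h_r)\,\widehat h_r^{(\ell)}$ plus a combination of terms $N_{m,r}^{(k)}(\widehat h_r)\prod_{i}\widehat h_r^{(\mu_i)}$ with $k\ge2$ and $\sum_i\mu_i=\ell$, each $\mu_i<\ell$. Assume inductively in $\ell$ that the lower-order bounds $\|\widehat h_r^{(\mu)}\|_\infty\le C_{h,\mu}\delta_r^{1-\mu}$ hold for all $r$, with $C_{h,1}:=B_{\mathrm{der}}$. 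Then each non-top term is bounded by a constant times $\delta_r^{1-k}\prod_i\delta_r^{1-\mu_i}=\delta_r^{1-\ell}$, because there are exactly $k$ factors. Setting $Y_r:=\delta_r^{\ell-1}\|\widehat h_r^{(\ell)}\|_\infty$ and using $\delta_{r+1}=\delta_r/2$, I get
\[
 Y_{r+1}\le 2^{1-\ell}\lambda_r\,Y_r+2^{1-\ell}\widetilde C_\ell,
\]
where $\widetilde C_\ell$ depends only on $C_{\mathrm{sq},2},\dots,C_{\mathrm{sq},\ell}$ and on $C_{h,1},\dots,C_{h,\ell-1}$. Since $\lambda_r\le\overline\lambda<3/2$, the contraction factor is at most $3/4$. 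With $Y_K=0$, the recursion gives $Y_r\le 4\cdot2^{1-\ell}\widetilde C_\ell=:C_{h,\ell}$ for all $r\ge K$. This is \eqref{eq:hk-higher-variable}.

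I do not expect a genuine obstacle here. The step that needs the most care is the factor bookkeeping: the scaling $\delta_r^{1-\ell}$ of the factor at scale $r$ must be exactly offset by the halving $\delta_{r+1}=\delta_r/2$, so that the top-order coefficient $2^{1-\ell}\lambda_r$ stays below one, uniformly in $r$. It is also important that the bounds on $N_{m,r}$ come from the fixed squeeze data alone. If instead one used the Hamiltonian bounds, or counted the number of levels or the block positions, the constants would depend on the tower heights or widths, which the statement forbids. Since $\widehat h_K=\id$ for every admissible $K$, enlarging $K$ does not affect any constant.
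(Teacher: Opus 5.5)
Your proposal is correct and follows essentially the same route as the paper. The chain rule gives $\|\widehat h_r'\|_\infty\le\prod_v\lambda_v\le B_{\mathrm{der}}$. Fa\`a di Bruno then gives the recursion $\|\widehat h_{r+1}^{(\ell)}\|_\infty\le\lambda_r\|\widehat h_r^{(\ell)}\|_\infty+C\delta_r^{1-\ell}$, which closes from $\widehat h_K=\id$ because $\delta_{r+1}=\delta_r/2$ and $2^{\ell-1}>\overline\lambda$. Your normalization $Y_r=\delta_r^{\ell-1}\|\widehat h_r^{(\ell)}\|_\infty$ only repackages the paper's choice $C_{h,\ell}\ge C_{\mathrm{rec},\ell}/(2^{\ell-1}-\overline\lambda)$.
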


\begin{proof}
Disjointness of the tower levels and \eqref{eq:L-first} give
$\|N_{m,r}'\|_\infty\le\lambda_r$, hence the first estimate.  Also
$\|N_{m,r}^{(\ell)}\|_\infty\le C_{\mathrm{sq},\ell}\delta_r^{1-\ell}$ by
\eqref{eq:Lhigher}.
Applying Fa\`a di Bruno's formula to
\eqref{eq:auxiliary-micro-conjugacy}, one obtains, successively for
$\ell=2,3,4$,
\[
 \|\widehat h_{r+1}^{(\ell)}\|_\infty
 \le\lambda_r\|\widehat h_r^{(\ell)}\|_\infty
 +C_{\mathrm{rec},\ell}\delta_r^{1-\ell},
\]
where $C_{\mathrm{rec},\ell}$ depends only on the fixed squeeze bounds,
$B_{\mathrm{der}}$, and the already fixed lower-order constants.  Since
$\delta_{r+1}^{1-\ell}=2^{\ell-1}\delta_r^{1-\ell}$ and
$2^{\ell-1}>\overline\lambda$, choosing
$C_{h,\ell}\ge C_{\mathrm{rec},\ell}/(2^{\ell-1}-\overline\lambda)$ closes the
induction.  The initial higher derivatives vanish because
$\widehat h_K=\id$, so these choices also cover the initial scale.
\end{proof}

Put
$\mathcal A_m=\mathcal E_m^{\mathrm{src}}\cup
\mathcal E_m^{\mathrm{tgt}}$ and
$\mathcal U_m=\mathcal H_m^{-1}(\mathcal A_m)$.  By
Proposition~\ref{prop:protected-density},
\begin{equation}\label{eq:active-rho-lower}
 \rho_m\ge1\quad\text{on the angular projection of }\mathcal A_m.
\end{equation}
On $\mathcal U_m$, Lemma~\ref{lem:nested-cotangent} identifies
$\mathcal H_m$ with
\begin{equation}\label{eq:Cm}
 C_m(x,s)=\left(h_m(x),\frac{s}{h_m'(x)}\right).
\end{equation}

\begin{lemma}\label{lem:cotangent-derivatives-variable}
There are constants $C_{\mathrm{cot},\ell}>0$, $\ell=1,2,3$, depending only on the fixed one-dimensional squeeze
data chosen in Lemma \ref{lem:protected-squeeze}, such that
\begin{align}
 \|DC_m^{\pm1}\|&\le C_{\mathrm{cot},1},\label{eq:C1-variable}\\
 \|D^2C_m^{\pm1}\|&\le C_{\mathrm{cot},2}\delta_{a_m}^{-1},\label{eq:C2-variable}\\
 \|D^3C_m^{\pm1}\|&\le C_{\mathrm{cot},3}\delta_{a_m}^{-2}.
 \label{eq:C3-variable}
\end{align}
The bounds hold on the respective active domains and, for the inverse,
along each segment in a lifted active target rectangle.  They are uniform
in $m$, the tower data, and every admissible increase of $K$.
\end{lemma}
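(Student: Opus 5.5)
The plan is to compute the derivatives of $C_m(x,s)=(h_m(x),s/h_m'(x))$ and of its inverse $C_m^{-1}(y,s)=(h_m^{-1}(y),s\rho_m(y))$ explicitly. We then bound each entry using three inputs. The first is Lemma~\ref{lem:hk-higher-variable} at $r=a_m$, where $h_m=\widehat h_{a_m}$: it gives $\|h_m'\|_\infty\le B_{\mathrm{der}}$ and $\|h_m^{(\ell)}\|_\infty\le C_{h,\ell}\delta_{a_m}^{1-\ell}$ for $\ell=2,3,4$. The second is the lower bound $\rho_m\ge1$ on the angular projection of $\mathcal A_m$, from \eqref{eq:active-rho-lower}; equivalently $h_m'\ge1$ at the corresponding physical points. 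The third is the height bound $s\le\delta_{a_m}$ in rotation coordinates and $s\le B_{\mathrm{der}}\delta_{a_m}$ in physical coordinates, which holds because $Q_r\subset\T\times[0,\delta_r]$ with $r\ge a_m$.

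For the forward map, $DC_m$ has entries $h_m'$, $0$, $-s h_m''/(h_m')^2$ and $1/h_m'$. Since $h_m'\ge1$, the entry $1/h_m'$ is at most $1$. The entry $s h_m''/(h_m')^2$ is at most $B_{\mathrm{der}}\delta_{a_m}\cdot C_{h,2}\delta_{a_m}^{-1}$. This gives \eqref{eq:C1-variable}. For second and third derivatives, every term of $\partial_x^k\partial_s^i(s/h_m')$ with $i\in\{0,1\}$ is a polynomial in $h_m'',\dots,h_m^{(k+2)}$ divided by a power of $h_m'$. It carries either the factor $s$ (when $i=0$) or no factor $s$ (when $i=1$, in which case one fewer $x$-derivative falls on $1/h_m'$). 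Counting scales, each $x$-derivative costs $\delta_{a_m}^{-1}$ and each factor $s$ gains $\delta_{a_m}$. So a total of $k+i$ derivatives produces the order $\delta_{a_m}^{1-(k+i)}$, which yields \eqref{eq:C2-variable} and \eqref{eq:C3-variable}. The third-order bound needs $h_m^{(4)}$, which is why Lemma~\ref{lem:hk-higher-variable} goes up to $\ell=4$. Negative powers of $h_m'$ are harmless because $h_m'\ge1$. The first component $h_m$ contributes $h_m''$ and $h_m'''$, both within the claimed bounds.

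For the inverse, we write $(h_m^{-1})'=1/(h_m'\circ h_m^{-1})=1/\rho_m$ and $\rho_m'=h_m''\circ h_m^{-1}/\rho_m$. Higher derivatives of $h_m^{-1}$ and $\rho_m$ then follow by the inverse-function form of Faà di Bruno: they are polynomials in $h_m^{(\ell)}\circ h_m^{-1}$ divided by powers of $\rho_m\ge1$. The same scaling count, now with $s\le\delta_{a_m}$ in rotation coordinates, gives the bounds for $C_m^{-1}$. One qualification: the first-order bound on the $s$-entry of $DC_m^{-1}$ uses $\rho_m\le B_{\mathrm{der}}$, not a lower bound, so it holds globally.

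The main point of care is domains. The lower bound $\rho_m\ge1$ is only available on the angular projection of $\mathcal A_m$. The statement also asks for bounds on the inverse along segments in lifted active target rectangles. I will handle this by noting that every point where a negative power of $h_m'$ or $\rho_m$ is evaluated has angular coordinate in the projection of a source or target interval. By Proposition~\ref{prop:protected-density}, both of these lie outside $\mathcal P_m$. So all evaluations happen where the density bound holds, and the constants depend only on $B_{\mathrm{der}}$ and $C_{h,\ell}$, as claimed.
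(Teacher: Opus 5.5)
Your proposal is correct and follows essentially the same route as the paper. You apply Lemma~\ref{lem:hk-higher-variable} at $r=a_m$, use $\rho_m\ge1$ from \eqref{eq:active-rho-lower} (equivalently $h_m'\ge1$ at the physical points) with the height bounds $v\le\delta_{a_m}$ and $s\le B_{\mathrm{der}}\delta_{a_m}$, and differentiate $C_m$ and $C_m^{-1}(y,v)=(h_m^{-1}(y),v\rho_m(y))$ directly, each height factor cancelling one power of $\delta_{a_m}^{-1}$; your observation that every evaluation point of a reciprocal of $h_m'$ or $\rho_m$ projects into $E_m^{\mathrm{src}}\cup E_m^{\mathrm{tgt}}$ is what the paper's closing convexity remark about lifted target rectangles secures.
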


\begin{proof}
Apply \eqref{eq:hk-higher-variable} at $r=a_m$.  If
$p=h_m'$ and $C_m(x,s)=(y,v)$, then $p(x)=\rho_m(y)$; hence on the active
regions, by \eqref{eq:active-rho-lower},
\[
 1\le p\le B_{\mathrm{der}},\qquad
 |p^{(\ell)}|\le C_{h,\ell+1}\delta_{a_m}^{-\ell},
 \quad \ell=1,2,3.
\]
Moreover, $v\le\delta_{a_m}$ and $s=vp\le
B_{\mathrm{der}}\delta_{a_m}$.  Differentiating the formula \eqref{eq:Cm} gives
\eqref{eq:C1-variable}--\eqref{eq:C3-variable}.  For the inverse use
\[
 C_m^{-1}(y,v)=\bigl(h_m^{-1}(y),v\rho_m(y)\bigr)
\]
and differentiate; every reciprocal of $p$ is bounded by one, and the
factor $v\le\delta_{a_m}$ removes one power of
$\delta_{a_m}^{-1}$ in the second component.  A lifted target rectangle is
convex, so the same estimates hold along its joining segments.
\end{proof}

\subsection{The macro-stage increment}

Set $\Delta_m=F_{m+1}-F_m$ in consistent annulus lifts.  From
\eqref{eq:2d-HF}, on its physical support,
\begin{equation}\label{eq:conjugation-difference-variable}
 F_{m+1}=C_m^{-1}\mathcal S_mC_m,\qquad
 F_m=C_m^{-1}\mathcal R_\alpha C_m.
\end{equation}

\begin{proposition}\label{prop:master-ledger}
There are constants $C_{\Delta,\ell}>0$, $\ell=0,1,2$, depending only on the
 fixed one-dimensional squeeze
data chosen in Lemma \ref{lem:protected-squeeze} and on $\chi$, with the following property.  On the
physical preimage of an active scale-$r$ source rectangle,
\begin{align}
 \|\Delta_m\|_{C^0}&\le
 C_{\Delta,0}\frac{\delta_r}{d_{m,r}},\label{eq:Delta0-master}\\
 \|D\Delta_m\|_{C^0}&\le
 C_{\Delta,1}\frac1{d_{m,r}},\label{eq:Delta1-master}\\
 \|D^2\Delta_m\|_{C^0}&\le
 C_{\Delta,2}\frac{\delta_r^{-1}}{d_{m,r}}.
 \label{eq:Delta2-master}
\end{align}
The constants are independent of $m,r$, all block and tower data, and every
admissible increase of $K$.
\end{proposition}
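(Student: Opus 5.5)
We will work on the physical preimage $C_m^{-1}(Q^{\mathrm{src}}_{m,r,j})$ of a fixed active scale-$r$ source rectangle, with $j\in\mathcal I_{m,r}$. There, \eqref{eq:conjugation-difference-variable} gives
\[
 \Delta_m=C_m^{-1}\circ\mathcal S_m\circ C_m-C_m^{-1}\circ\mathcal R_\alpha\circ C_m .
\]
Both inner maps carry $Q^{\mathrm{src}}_{m,r,j}$ onto $Q^{\mathrm{tgt}}_{m,r,j}$ by Lemma~\ref{lem:2d-moving-front}, and the outer map $C_m^{-1}$ is evaluated only on the lifted target rectangle. That rectangle is convex, and by Lemma~\ref{lem:cotangent-derivatives-variable} it carries the bounds $\|DC_m^{-1}\|\le C_{\mathrm{cot},1}$, $\|D^2C_m^{-1}\|\le C_{\mathrm{cot},2}\delta_{a_m}^{-1}$ and $\|D^3C_m^{-1}\|\le C_{\mathrm{cot},3}\delta_{a_m}^{-2}$. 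Since $r\ge a_m$, we have $\delta_{a_m}^{-1}\le\delta_r^{-1}$, so all these bounds may be restated at scale $r$.

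We write $\Psi=\mathcal S_m$, $\Psi_0=\mathcal R_\alpha$ and $E=\Psi-\Psi_0$. Lemma~\ref{lem:2d-moving-front} gives $\|E\|\le C\delta_r/d_{m,r}$, $\|DE\|\le C/d_{m,r}$ and $\|D^2E\|\le C\delta_r^{-1}/d_{m,r}$ on the rectangle, and $D\Psi_0=I$. The plan is to use the interpolation formula
\[
 C_m^{-1}(\Psi_0+E)-C_m^{-1}(\Psi_0)=\int_0^1 DC_m^{-1}(\Psi_0+tE)\,E\,dt .
\]
The segment $\Psi_0+tE$ joins two points of the convex target rectangle, which is why the segment version of Lemma~\ref{lem:cotangent-derivatives-variable} is the one we need. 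We then compose the result with $C_m$, whose derivatives are bounded by $C_{\mathrm{cot},1}$, $C_{\mathrm{cot},2}\delta_{a_m}^{-1}$ and $C_{\mathrm{cot},3}\delta_{a_m}^{-2}$ on the source domain. This gives the $C^0$ bound at once: $|\Delta_m|\le C_{\mathrm{cot},1}\|E\|\le C\delta_r/d_{m,r}$.

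For the derivative we apply the chain rule to $\int_0^1 DC_m^{-1}(\Psi_0+tE)\,E\,dt\circ C_m$. Two types of term appear. The first is $D^2C_m^{-1}\cdot(I+tDE)\cdot E$, bounded by $\delta_r^{-1}\cdot1\cdot\delta_r/d_{m,r}=1/d_{m,r}$. The second is $DC_m^{-1}\cdot DE$, bounded by $1/d_{m,r}$. Multiplying by $\|DC_m\|$ gives \eqref{eq:Delta1-master}.

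For the second derivative we differentiate once more and count scales using the rule that each derivative costs at most $\delta_r^{-1}$. The relevant ingredients are that $D^3C_m^{-1}$ costs $\delta_r^{-2}$ and is paired with $E$ of size $\delta_r$, and that $D^2C_m^{-1}$ is paired with $DE$. The terms are:
\begin{itemize}
\item $D^3C_m^{-1}\cdot E$, of size $\delta_r^{-2}\cdot\delta_r/d=\delta_r^{-1}/d$;
\item $D^2C_m^{-1}\cdot DE$, of size $\delta_r^{-1}/d$;
\item $D^2C_m^{-1}\cdot D^2E\cdot E$-type terms, which are smaller;
\item $DC_m^{-1}\cdot D^2E$, of size $\delta_r^{-1}/d$;
\item terms containing $D^2C_m$ times the first-derivative bound, of size $\delta_{a_m}^{-1}/d\le\delta_r^{-1}/d$.
\end{itemize}
Every term is therefore at most $C\delta_r^{-1}/d_{m,r}$, which gives \eqref{eq:Delta2-master}. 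The constants involved are products of the $C_{\mathrm{cot},\ell}$ and $C_{\mathrm{ham},\ell}$ together with $\sup_r\|DE\|\le C_{\mathrm{ham},1}$, so they are uniform as claimed.

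The main obstacle is justifying the evaluation domains. The identification of $\mathcal H_m$ with $C_m$ must hold on the whole physical preimage, and the segments between $\mathcal R_\alpha(p)$ and $\mathcal S_m(p)$ must stay inside the region where Lemma~\ref{lem:nested-cotangent} and the bound \eqref{eq:active-rho-lower} apply. To settle this, we first check that $Q_r\subset J_{a_m}\times[0,\delta_{a_m}]$, so that heights stay at most $\delta_{a_m}$ and Lemma~\ref{lem:nested-cotangent} applies. We then note that the target rectangle is a convex set whose angular projection lies in $E_m^{\mathrm{tgt}}$, where Proposition~\ref{prop:protected-density} gives $\rho_m\ge1$. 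A secondary point is the lift convention for $\Delta_m$ in the angular variable: the difference must be taken in compatible lifts, and the bound $\|E\|\le C\delta_r/d_{m,r}<1/2$ makes this choice unambiguous.
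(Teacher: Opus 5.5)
Your proposal is correct and follows essentially the paper's proof. Both arguments write $\Delta_m=C_m^{-1}\circ\mathcal S_m\circ C_m-C_m^{-1}\circ\mathcal R_\alpha\circ C_m$, use convexity of the lifted target rectangle to apply the inverse cotangent bounds along the segment joining $\mathcal R_\alpha(w)$ and $\mathcal S_m(w)$, and absorb $\delta_{a_m}^{-1}\le\delta_r^{-1}$ when pairing $D^kC_m^{\pm1}$ with the defect bounds of Lemma~\ref{lem:2d-moving-front}. The only difference is cosmetic: you differentiate the integral form $\int_0^1 DC_m^{-1}(\mathcal R_\alpha+tE)\,E\,dt$ with $E=\mathcal S_m-\mathcal R_\alpha$, whereas the paper subtracts the two chain-rule expansions and bounds the resulting five groups directly.
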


\begin{proof}
	Fix an active scale-$r$ component and write
	$\Psi_m=C_m^{-1}$, $w=C_m(z)$,
	$w_{\mathcal S}=\mathcal S_m(w)$, and
	$w_{\mathcal R}=\mathcal R_\alpha(w)$.  Thus, by
	\eqref{eq:conjugation-difference-variable},
	\[
	\Delta_m(z)
	=
	\Psi_m(w_{\mathcal S})-\Psi_m(w_{\mathcal R}).
	\]
	Both $w_{\mathcal S}$ and $w_{\mathcal R}$ lie in the same lifted active
	target rectangle.  Since this rectangle is convex, the inverse cotangent
	bounds \eqref{eq:C1-variable}--\eqref{eq:C3-variable} apply along the
	segment joining them.
	
	By the mean-value theorem, \eqref{eq:C1-variable}, and
	\eqref{eq:2d-defect-0},
	\[
	|\Delta_m(z)|
	\le
	\sup_{[w_{\mathcal R},w_{\mathcal S}]}\|D\Psi_m\|\,
	|w_{\mathcal S}-w_{\mathcal R}|
	\le
	C\frac{\delta_r}{d_{m,r}},
	\]
	which gives \eqref{eq:Delta0-master}.
	
	Differentiating \eqref{eq:conjugation-difference-variable} once gives
	\[
	[D\Psi_m(w_{\mathcal S})-D\Psi_m(w_{\mathcal R})]
	D\mathcal S_mDC_m
	+
	D\Psi_m(w_{\mathcal R})(D\mathcal S_m-I)DC_m.
	\]
	By the mean-value theorem, \eqref{eq:C2-variable}, and
	\eqref{eq:2d-defect-0},
	\[
	\|D\Psi_m(w_{\mathcal S})-D\Psi_m(w_{\mathcal R})\|
	\le
	C\delta_{a_m}^{-1}
	\frac{\delta_r}{d_{m,r}}.
	\]
	Together with \eqref{eq:2d-defect-1} and
	\eqref{eq:C1-variable}, and using
	$\delta_r\le\delta_{a_m}$, this proves
	\eqref{eq:Delta1-master}.
	
	For the second derivative, recall the chain-rule identity
	\[
	D^2(\Psi\circ S\circ C)
	=
	D^2\Psi[DS\,DC,DS\,DC]
	+
	D\Psi\,D^2S[DC,DC]
	+
	D\Psi\,DS\,D^2C.
	\]
	Put
	\[
	P=DC_m(z),\qquad
	Q=D^2C_m(z),\qquad
	S=D\mathcal S_m(w).
	\]
	Since $D\mathcal R_\alpha=I$ and
	$D^2\mathcal R_\alpha=0$, subtracting the corresponding expression for
	$\Psi_m\circ\mathcal R_\alpha\circ C_m$ gives the five groups
	\begin{align*}
		&[D^2\Psi_m(w_{\mathcal S})-D^2\Psi_m(w_{\mathcal R})][SP,SP],\\
		&D^2\Psi_m(w_{\mathcal R})[(S-I)P,SP]
		+D^2\Psi_m(w_{\mathcal R})[P,(S-I)P],\\
		&D\Psi_m(w_{\mathcal S})D^2\mathcal S_m(w)[P,P],\\
		&[D\Psi_m(w_{\mathcal S})-D\Psi_m(w_{\mathcal R})]SQ,\\
		&D\Psi_m(w_{\mathcal R})(S-I)Q.
	\end{align*}
	Here the second line is the expansion of one bilinear-factor difference.
	The segment between $w_{\mathcal S}$ and $w_{\mathcal R}$ stays in the
	same lifted active target rectangle, so
	\eqref{eq:C1-variable}--\eqref{eq:C3-variable} apply along it.
	Also, by \eqref{eq:2d-defect-1} and $d_{m,r}\ge1$,
	\[
	\|S\|\le 1+C_{\mathrm{ham},1}.
	\]
	
	Using \eqref{eq:2d-defect-0}--\eqref{eq:2d-defect-2} and
	\eqref{eq:C1-variable}--\eqref{eq:C3-variable}, the five groups above
	are bounded by fixed constants times, respectively,
	\[
	\frac{\delta_{a_m}^{-2}\delta_r}{d_{m,r}},\qquad
	\frac{\delta_{a_m}^{-1}}{d_{m,r}},\qquad
	\frac{\delta_r^{-1}}{d_{m,r}},\qquad
	\frac{\delta_{a_m}^{-2}\delta_r}{d_{m,r}},\qquad
	\frac{\delta_{a_m}^{-1}}{d_{m,r}}.
	\]
	Since $r\ge a_m$, we have $\delta_r\le\delta_{a_m}$, and hence
	\[
	\delta_{a_m}^{-1}\le\delta_r^{-1},
	\qquad
	\delta_{a_m}^{-2}\delta_r\le\delta_r^{-1}.
	\]
	Thus all five groups are bounded by a fixed multiple of
	$\delta_r^{-1}/d_{m,r}$, which proves
	\eqref{eq:Delta2-master}.
	
	For each active pair, restrict $\Delta_m$ to its physical source component
	and extend it by zero to the rest of the closed annulus.  The defect is
	identically zero in a relative neighborhood of the lateral and inner
	edges, so this extension is smooth up to the outer boundary and obeys
	the same bounds.  These component extensions have disjoint supports.
\end{proof}

Figure~\ref{fig:active-ledger} illustrates the local segment estimates
and the disjoint-support gluing used below.

\begin{figure}[!htbp]
\centering
\begin{tikzpicture}[x=1cm,y=1cm,>=Latex,font=\small]
  \node[anchor=west,font=\bfseries] at (0,4.03)
    {(a) A convex active target};
  \begin{scope}[yshift=-3mm]
  \draw[line width=.6pt,fill=softblue] (2.28,1.42) rectangle (6.53,3.39);
  \node[anchor=south] at (4.40,3.46) {lifted active target rectangle};
  \node[anchor=west,align=center] (w) at (0,2.44) {$w=C_m(z)$};
  \coordinate (ws) at (3.02,2.95);
  \coordinate (wr) at (5.67,1.95);
  \draw[-{Latex[length=2mm]},line width=.6pt]
    (w.east) -- (ws) node[pos=.42,above] {$\mathcal S_m$};
  \draw[-{Latex[length=2mm]},line width=.6pt]
    (w.east) -- (wr) node[pos=.27,below] {$\mathcal R_\alpha$};
  \draw[line width=.6pt,dashed] (ws) -- (wr);
  \fill (ws) circle (1.4pt);
  \fill (wr) circle (1.4pt);
  \node[anchor=south west] at (ws) {$w_{\mathcal S}$};
  \node[anchor=north east] at (wr) {$w_{\mathcal R}$};
  \node[fill=softblue,inner sep=1.5pt] at (4.45,2.48) {joining segment};
  \node[align=center,text width=6.7cm] at (3.25,.70)
    {$\Delta_m(z)=\Psi_m(w_{\mathcal S})-\Psi_m(w_{\mathcal R})$\\[3pt]
     $\Psi_m=C_m^{-1}$: use the active-domain bounds};
  \end{scope}

  \node[anchor=west,font=\bfseries] at (7.28,4.03)
    {(b) Gluing zero extensions};
  \draw[-{Latex[length=2mm]},line width=.6pt] (7.50,1.50) -- (14.23,1.50);
  \draw[line width=.8pt,deepblue]
    (7.74,1.50) .. controls (8.17,1.50) and (8.02,3.02) .. (9.13,3.02)
    .. controls (10.18,3.02) and (10.17,1.50) .. (10.49,1.50);
  \draw[line width=.8pt,coregreenline,dashed]
    (10.86,1.50) .. controls (11.15,1.50) and (11.25,2.60) .. (12.20,2.60)
    .. controls (13.15,2.60) and (13.61,1.50) .. (13.85,1.50);
  \node[deepblue] at (9.15,3.29) {$u_i$};
  \node[coregreenline] at (12.22,2.90) {$u_j$};
  \fill (9.83,1.50) circle (1.3pt);
  \fill (11.43,1.50) circle (1.3pt);
  \node[anchor=north] at (9.83,1.43) {$x$};
  \node[anchor=north] at (11.43,1.43) {$y$};
  \node at (10.85,.75) {$u_i(y)=u_j(x)=0$};
  \node at (10.85,.25)
    {$[\sum_i u_i]_{C^{0,\beta}}
      \le2\max_i[u_i]_{C^{0,\beta}}$};
\end{tikzpicture}
\caption{Local conjugation and global gluing. Both images lie in the same
convex lifted target rectangle, allowing the inverse cotangent estimates
along their joining segment. The scalar profiles on the right illustrate
components of the zero-extended increment derivatives; their supports
are disjoint. Any two-point comparison involves at most two
components. No uniform lower bound on their separation is needed.}
\label{fig:active-ledger}
\end{figure}

We use the flat product metric on $\mathbb A=\T\times[0,1]$ and its
standard tangent trivialization for annular derivative norms.  The shortest
segment in compatible lifts stays in $\R\times[0,1]$, so a global bound
on the derivative implies the corresponding Lipschitz bound.  Let $u_i$
be finitely many scalar-, vector-, or matrix-valued functions on this
metric space with pairwise disjoint supports, each understood as a global
zero extension.  Then
\begin{equation}\label{eq:holder-zero-gluing}
 \left[\sum_i u_i\right]_{C^{0,\beta}}
 \le 2\max_i[u_i]_{C^{0,\beta}}.
\end{equation}
Indeed, at a point at most one $u_i$ is nonzero.  If the two points lie
in distinct supports indexed by $i$ and $j$, then $u_i(y)=u_j(x)=0$;
applying the two global H\"older bounds gives the factor two.  If both
points lie in the same support, or one lies outside all supports, a
single H\"older bound suffices.  No lower bound on the distance between
supports is required.

\Needspace{11\baselineskip}
\begin{corollary}\label{cor:holder-stage}
For every $0<\beta<1$, there is a constant
$C_{\mathrm{Hol},\beta}>0$, with the same uniformity as in
Proposition~\ref{prop:master-ledger}, such that
\begin{equation}\label{eq:holder-macro-stage}
 [D\Delta_m]_{C^{0,\beta}}
 \le C_{\mathrm{Hol},\beta}
 \max_{a_m\le r<b_m}\frac{\delta_r^{-\beta}}{d_{m,r}}.
\end{equation}
Moreover,
\begin{equation}\label{eq:C1-macro-stage}
 \|\Delta_m\|_{C^1}
 \le C_{\Delta,01}
 \max_{a_m\le r<b_m}\frac1{d_{m,r}},
 \qquad C_{\Delta,01}:=C_{\Delta,0}+C_{\Delta,1}.
\end{equation}
\end{corollary}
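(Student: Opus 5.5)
The plan is to interpolate the component bounds of Proposition~\ref{prop:master-ledger} one active component at a time, and then to glue the components with \eqref{eq:holder-zero-gluing}.

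Fix an active pair $j\in\mathcal I_{m,r}$. Let $u_{j}$ be the zero extension of $\Delta_m$ restricted to the physical source component $\mathcal H_m^{-1}(Q^{\mathrm{src}}_{m,r,j})$, as described at the end of the proof of Proposition~\ref{prop:master-ledger}. By Lemma~\ref{lem:2d-moving-front} the defect is supported in $\mathcal E_m^{\mathrm{src}}$, so $\Delta_m=\sum_j u_j$ with pairwise disjoint supports. The $C^1$ estimate \eqref{eq:C1-macro-stage} is then immediate. At any point at most one $u_j$ is nonzero, and \eqref{eq:Delta0-master}--\eqref{eq:Delta1-master} together with $\delta_r\le1$ give
\[
\|u_j\|_{C^0}+\|Du_j\|_{C^0}\le (C_{\Delta,0}+C_{\Delta,1})/d_{m,r}.
\]
Taking the maximum over $r$ yields \eqref{eq:C1-macro-stage}.

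For the H\"older seminorm of a single component, I will use the standard two-case interpolation. Take points $z,w$ in the annulus. If $|z-w|\ge\delta_r$, I bound
\[
\|Du_j(z)-Du_j(w)\|\le 2\|Du_j\|_{C^0}\le 2C_{\Delta,1}d_{m,r}^{-1}\le 2C_{\Delta,1}d_{m,r}^{-1}\delta_r^{-\beta}|z-w|^\beta .
\]
If $|z-w|<\delta_r$, I use the global Lipschitz bound for $Du_j$ along the shortest segment in compatible lifts. This is legitimate because $u_j$ is a global zero extension that is smooth up to the boundary with the same bounds, so \eqref{eq:Delta2-master} holds everywhere, and the flat metric remark preceding \eqref{eq:holder-zero-gluing} lets the derivative bound control the Lipschitz constant. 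That gives
\[
\|Du_j(z)-Du_j(w)\|\le C_{\Delta,2}\delta_r^{-1}d_{m,r}^{-1}|z-w|\le C_{\Delta,2}\delta_r^{-\beta}d_{m,r}^{-1}|z-w|^\beta,
\]
using $|z-w|^{1-\beta}<\delta_r^{1-\beta}$. Hence
\[
[Du_j]_{C^{0,\beta}}\le \max(2C_{\Delta,1},C_{\Delta,2})\,\delta_r^{-\beta}/d_{m,r}.
\]

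Finally, I apply \eqref{eq:holder-zero-gluing} to the matrix-valued functions $Du_j$. These have pairwise disjoint supports and their sum is $D\Delta_m$, so
\[
[D\Delta_m]_{C^{0,\beta}}\le 2\max_j[Du_j]_{C^{0,\beta}}.
\]
Setting $C_{\mathrm{Hol},\beta}=2\max(2C_{\Delta,1},C_{\Delta,2})$ proves \eqref{eq:holder-macro-stage}. This constant is in fact independent of $\beta$ and inherits the uniformity of Proposition~\ref{prop:master-ledger}.

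The step I expect to be most delicate is justifying that the second-derivative bound applies along the whole joining segment for the zero extension, including segments that leave the component or cross the outer boundary. I will handle this by relying on the statement at the end of Proposition~\ref{prop:master-ledger}: each zero extension is smooth on the closed annulus and obeys the same bounds globally. The Lipschitz estimate then holds on the convex lifted strip $\R\times[0,1]$ with no reference to the geometry of the component.
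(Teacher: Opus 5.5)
Your proposal is correct and follows essentially the same route as the paper. On each component zero extension you bound $D\Delta_m$ in sup norm by $C_{\Delta,1}/d_{m,r}$ and in Lipschitz norm by $C_{\Delta,2}\delta_r^{-1}/d_{m,r}$, interpolate to $\delta_r^{-\beta}/d_{m,r}$, glue with \eqref{eq:holder-zero-gluing}, and obtain the $C^1$ bound from disjointness of supports. The only difference is cosmetic: you split at $|z-w|=\delta_r$ instead of using $\min\{2A,Bd\}\le(2A)^{1-\beta}(Bd)^{\beta}$. This gives you the $\beta$-independent constant $2\max(2C_{\Delta,1},C_{\Delta,2})$ rather than the paper's slightly sharper $2^{2-\beta}C_{\Delta,1}^{1-\beta}C_{\Delta,2}^{\beta}$.
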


\begin{proof}
Let $V_{m,r,j}$ be the component zero extension constructed above and
put $u=DV_{m,r,j}$.  Equations
\eqref{eq:Delta1-master}--\eqref{eq:Delta2-master} give
$\|u\|_\infty\le A:=C_{\Delta,1}/d_{m,r}$ and
$\operatorname{Lip}(u)\le B:=C_{\Delta,2}\delta_r^{-1}/d_{m,r}$.
For any $x,y$ in the annulus,
\[
 \|u(x)-u(y)\|\le\min\{2A,Bd(x,y)\}
 \le (2A)^{1-\beta}B^\beta d(x,y)^\beta.
\]
Thus each global zero extension satisfies
\[
 [DV_{m,r,j}]_{C^{0,\beta}}
 \le 2^{1-\beta}C_{\Delta,1}^{1-\beta}C_{\Delta,2}^{\beta}
       \frac{\delta_r^{-\beta}}{d_{m,r}}.
\]
Apply \eqref{eq:holder-zero-gluing}; for this norm convention we may take
$C_{\mathrm{Hol},\beta}=2^{2-\beta}
 C_{\Delta,1}^{1-\beta}C_{\Delta,2}^{\beta}$.
The two supremum bounds in the $C^1$ norm are controlled separately by
the corresponding maxima because the component supports are disjoint.
Equation~\eqref{eq:C1-macro-stage} follows from
\eqref{eq:Delta0-master}--\eqref{eq:Delta1-master} and $\delta_r\le1$.
\end{proof}

\begin{remark}\label{rem:ledger-uniformity}
The passage through auxiliary micro-factors is essential.  A whole block has
uniform $C^1$ cost, but its second derivative may see the finest scale
$\delta_{b_m-1}^{-1}$; no coarse-scale $C^2$ estimate is asserted.
\end{remark}
\section{Master convergence and completion to the disk}\label{sec:master-convergence}

We now isolate the abstract summability statement supplied by the derivative ledger.

\begin{proposition}\label{prop:master-convergence}
Fix the block and tower data satisfying
\eqref{eq:tower-separation}, together with the integer allocation
$d_{m,r}$ chosen in Section~\ref{sec:towers}.
Keeping this allocation fixed, for an independent exponent
$0\le t<1$ set
\begin{equation}\label{eq:Xi-definition}
	\Xi_m(t):=
	\max_{a_m\le r<b_m}\frac{\delta_r^{-t}}{d_{m,r}}.
\end{equation}
\begin{enumerate}[label=\textnormal{(\roman*)}]
\item If
\begin{equation}\label{eq:C1-master-condition}
 \sum_m\Xi_m(0)<\infty,
\end{equation}
then $F_m$ converges in $C^1$ to an area-preserving annulus diffeomorphism
$F$.  Its outer boundary restriction is a genuine $C^1$ Denjoy
diffeomorphism of rotation number $\alpha$.
\item If, for some $0<\beta<1$,
\begin{equation}\label{eq:Holder-master-condition}
 \sum_m\Xi_m(\beta)<\infty,
\end{equation}
then $F$ and $F^{-1}$ are $C^{1,\beta}$.
\end{enumerate}
After attaching a rigid central disk, we obtain an area-preserving irrational pseudo-rotation with exactly one periodic point.  The resulting disk map is not $C^0$-rigid, and on $\operatorname{int}(\D)\setminus\{0\}$ it is smoothly conjugate to the rigid rotation.
\end{proposition}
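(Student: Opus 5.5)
The plan is to prove (i) by a Cauchy-sequence argument in $C^1$, then upgrade to $C^{1,\beta}$ in (ii), and finally verify the dynamical properties of the limit and glue in the central disk.

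\textbf{Step 1: $C^1$ convergence.} By \eqref{eq:C1-macro-stage}, $\|F_{m+1}-F_m\|_{C^1}\le C_{\Delta,01}\Xi_m(0)$. Hence \eqref{eq:C1-master-condition} makes $(F_m)$ Cauchy in $C^1$, with a limit $F$. For the inverses I would use the same ledger: $F_m^{-1}=\mathcal H_m^{-1}\mathcal R_\alpha^{-1}\mathcal H_m$, and $\mathcal S_m^{-1}$ has the symmetric moving-front form with the quotient $\Lambda_{r,\mp d_{m,r}^{-1}}$. Proposition~\ref{prop:master-ledger} therefore applies verbatim to $F_{m+1}^{-1}-F_m^{-1}$, now on target components, and $F_m^{-1}$ converges in $C^1$ too. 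The two limits are mutually inverse, so $F$ is a $C^1$ diffeomorphism. Each $F_m$ preserves $\omega$, so $\det DF_m=1$, and this passes to the $C^1$ limit.

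\textbf{Step 2: Hölder regularity for (ii).} Corollary~\ref{cor:holder-stage} gives $[D\Delta_m]_{C^{0,\beta}}\le C_{\mathrm{Hol},\beta}\Xi_m(\beta)$. Each $F_m$ is smooth, so $[DF_0]_{C^{0,\beta}}<\infty$, and summing gives $[DF]_{C^{0,\beta}}<\infty$; the inverse is handled the same way. Note that $\Xi_m(0)\le\Xi_m(\beta)$, so (ii) already contains the hypothesis of (i).

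\textbf{Step 3: boundary dynamics and the interior.} By \eqref{eq:boundary-stage-identification}, the boundary restriction of $F$ is $g=\lim g_m$ in $C^1$. Each $g_m$ is conjugate to $R_\alpha$, so $\rot(g_m)=\alpha$, and rotation number is continuous, giving $\rot(g)=\alpha$. Lemma~\ref{lem:stabilization} gives pairwise disjoint nondegenerate intervals $I_j$ with $g_m(I_j)=I_{j+1}$ for all large $m$. Passing to the limit, $g(I_j)=I_{j+1}$, so $g$ has a wandering interval and is a genuine Denjoy diffeomorphism.

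For the interior, fix $s_0>0$. Only finitely many collars $\T\times[0,\delta_r]$ reach height $s_0$, so $\mathcal H_m$ is eventually constant on $\{s\ge s_0\}$, equal to some $\mathcal H_\infty$. There $F=\mathcal H_\infty^{-1}\mathcal R_\alpha\mathcal H_\infty$, so on each such strip $F$ is smoothly conjugate to the rigid rotation. This gives smooth invariant circles, rotation number $\alpha$ on each, and no periodic points. Near $s=1$ every $\mathcal H_m$ is the identity, so $F=\mathcal R_\alpha$ on an inner collar. I would identify the annulus with $\{\rho\le|z|\le1\}$ by an area-preserving polar map and attach the rigid rotation on $\{|z|\le\rho\}$. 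This gives a pseudo-rotation with $\Per=\{0\}$, of class $C^1$ (respectively $C^{1,\beta}$), since the polar map is smooth away from $0$.

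\textbf{Step 4: non-rigidity.} If $F^{n_k}\to\id$ uniformly, then $g^{n_k}\to\id$ on the boundary. That is impossible, since $g^{n}(I_0)=I_n$ is disjoint from $I_0$ and $I_0$ is nondegenerate. To make this precise: take $x$ at the center of $I_0$. Then $g^{n}(x)\notin I_0$, so its distance from $x$ is at least $|I_0|/2$.

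\textbf{Main obstacle.} The delicate point is Step 1's claim that the inverse increments obey the same ledger. The key fact needed is that $\rho_m\ge1$ also holds on target regions. I expect this to go through because Proposition~\ref{prop:protected-density} covers $E_m^{\mathrm{tgt}}$, and Lemma~\ref{lem:cotangent-derivatives-variable} was proved for both $C_m^{\pm1}$, but it is the step I would check most carefully.
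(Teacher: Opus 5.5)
Your Steps 1, 2 and 4 are sound, and Step 1 takes a different route from the paper. The paper never runs the ledger for inverses. It shows the $C^1$ limit is a diffeomorphism topologically: $\det DF\equiv1$; an interior point cannot be sent to $\partial\mathbb A$, since $s\circ F$ would have an interior extremum where $DF$ is invertible; $F$ is locally invertible in boundary charts; and $F$ is a proper covering of degree one. It then gets $F^{-1}\in C^{1,\beta}$ from $DF^{-1}=(DF\circ F^{-1})^{-1}$ with $F^{-1}$ Lipschitz. Your symmetric ledger also works. On a target level, $\mathcal S_m^{-1}$ has middle quotient $\Lambda_{r,\mp d_{m,r}^{-1}}$. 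Proposition~\ref{prop:protected-density} gives $\rho_m\ge1$ on $E_m^{\mathrm{tgt}}$ as well as on $E_m^{\mathrm{src}}$. Lemma~\ref{lem:cotangent-derivatives-variable} controls $C_m$ on $\mathcal U_m$, which contains the physical target components, and $C_m^{-1}$ on the convex source rectangles. Your route gives invertibility and the regularity of $F^{-1}$ at once, at the cost of re-running the ledger. Your midpoint argument in Step 4 is a slightly more direct version of the paper's shrinking-length argument.

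The gap is in Step 3. The fact that only finitely many collars $\T\times[0,\delta_{a_m}]$ reach height $s_0$ shows that the \emph{inverses} $\mathcal H_{m+1}^{-1}=\mathcal H_m^{-1}\mathcal M_m^{-1}$ stabilize on $\{s\ge s_0\}$. It says nothing about the forward maps. Indeed $\mathcal H_{m+1}(z)\ne\mathcal H_m(z)$ is possible whenever $z$ lies in the \emph{physical} preimage $\mathcal H_m^{-1}(\T\times[0,\delta_{a_m}])$, and you have not shown that this preimage shrinks to $s=0$. Supports alone only place it in $\T\times[0,\delta_{a_0}]$, because the earlier factors $\mathcal M_k^{-1}$, $k<m$, live on the thicker collars $\T\times[0,\delta_{a_k}]$ and may raise heights. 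As written, you only get $F\circ\mathcal K_\infty=\mathcal K_\infty\circ\mathcal R_\alpha$, where $\mathcal K_\infty$ is the stabilized limit of $\mathcal H_m^{-1}$, and only on the image $\mathcal K_\infty(\T\times(0,1])$. That image has full area, but you have not shown it is all of $\T\times(0,1]$. So the invariant foliation of the whole interior, the absence of interior periodic points, and hence $\Per(F)=\{0\}$, are not yet established.

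The missing input is the vertical part of the cotangent structure. By Lemma~\ref{lem:nested-cotangent}, $\mathcal H_m^{-1}(y,s)=(h_m^{-1}(y),s\rho_m(y))$ for $s\le\delta_{a_m}$. By Lemma~\ref{lem:hk-higher-variable}, $\rho_m\le\|h_m'\|_\infty\le B_{\mathrm{der}}$. Hence $\mathcal H_m^{-1}(\T\times[0,\delta_{a_m}])\subset\T\times[0,B_{\mathrm{der}}\delta_{a_m}]$. With this inclusion, both $\mathcal H_m$ and $\mathcal H_m^{-1}$ stabilize on compact subsets of $\T\times(0,1]$, the two limits are mutual inverses, and the rest of your Step 3 goes through.
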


\begin{proof}
	We first assume \eqref{eq:C1-master-condition}.  Estimate
	\eqref{eq:C1-macro-stage} implies
	\[
	\sum_m\|F_{m+1}-F_m\|_{C^1}<\infty,
	\]
	so $F_m\to F$ in $C^1$.
	
Recall that $\mathbb A=\mathbb T\times[0,1]$ carries the coordinates
$(x,s)$ and the area form $dx\wedge ds$. We first verify that the $C^1$ limit is still a global area-preserving diffeomorphism of the closed annulus.  Each $F_m$ is
conjugate to $\mathcal R_\alpha$ by a Hamiltonian diffeomorphism and
satisfies
\[
\det DF_m\equiv1.
\]
Passing to the $C^1$ limit gives $\det DF\equiv1$, hence
\[
F^*(dx\wedge ds)=dx\wedge ds,
\]
and $DF$ is invertible everywhere.  Uniform convergence shows that
$F$ preserves the two boundary circles.  Using the compatible
degree-one lifts of the finite-stage maps, the restriction of $F$ to
each boundary component has degree one.

No interior point can map to the boundary.  Indeed, if
$z\in\operatorname{int}\mathbb A$ and $F(z)\in\partial\mathbb A$, then
the normal coordinate $s\circ F$ attains an interior minimum or maximum
at $z$.  Hence
\[
d(s\circ F)_z=0.
\]
But
\[
d(s\circ F)_z=ds_{F(z)}\circ DF_z,
\]
which is impossible because $ds_{F(z)}\ne0$ and $DF_z$ is invertible.
Thus
\[
F^{-1}(\partial\mathbb A)=\partial\mathbb A.
\]

At a boundary point, the boundary defining function $s\circ F$ is
constant along the boundary component, so its derivative in the
tangential direction vanishes. Since $\det DF=1$, its derivative in
the transverse direction is nonzero. Moreover, because $F$ maps the
annulus into itself and preserves the corresponding boundary
component, this transverse derivative points to the inward side.
Hence the local inverse preserves the inward half-space, and the
inverse function theorem in boundary charts shows that $F$ is a local
$C^1$ diffeomorphism of the closed annulus.

Since $\mathbb A$ is compact, $F$ is proper.  Hence $F$ is a finite
covering map.  Its restriction to each boundary component has degree
one, so the covering degree is one.  Therefore $F$ is a global $C^1$
diffeomorphism.
	
	We next identify the outer boundary dynamics.  On $s=0$,
	\eqref{eq:boundary-stage-identification} shows that
	\[
	g_m\to g:=F|_{s=0}
	\]
	in $C^1$.  Since each $g_m$ is conjugate to $R_\alpha$, continuity of
	the circle rotation number gives
	\[
	\rot(g)=\alpha.
	\]
	For each fixed $j$, Lemma~\ref{lem:stabilization} gives
	$g_m(I_j)=I_{j+1}$ for all large $m$.  Passing to the uniform limit
	yields
	\[
	g(I_j)=I_{j+1}.
	\]
	The intervals $I_j$ are pairwise disjoint and nondegenerate, so their
	interiors form a wandering orbit.  Thus $g$ is a genuine $C^1$ Denjoy
	diffeomorphism.
	
	Suppose now that \eqref{eq:Holder-master-condition} holds.  Since
	$\Xi_m(0)\le\Xi_m(\beta)$, condition
	\eqref{eq:C1-master-condition} also holds, and therefore
	part~\textnormal{(i)} applies.  Combining
	\eqref{eq:C1-macro-stage} with \eqref{eq:holder-macro-stage} gives
	\[
	\sum_m\|F_{m+1}-F_m\|_{C^{1,\beta}}<\infty.
	\]
	Hence $F\in C^{1,\beta}$.  Since $F$ is a $C^1$ diffeomorphism on a
	compact set, $F^{-1}$ is Lipschitz.  Therefore
	$DF\circ F^{-1}$ is still $\beta$-H\"older.  Using
	\[
	DF^{-1}(y)=\bigl(DF(F^{-1}y)\bigr)^{-1},
	\]
	together with the Lipschitz continuity of matrix inversion on the
	compact set
	\[
	DF(\mathbb A)\subset GL(2,\R),
	\]
	we conclude that $DF^{-1}$ is also $\beta$-H\"older.  Thus
	$F^{-1}\in C^{1,\beta}$.
	
	It remains to identify the dynamics on $\T\times(0,1]$.  The important
	point is that the coordinate changes need not converge globally near
	the outer boundary; instead, their change regions shrink to that
	boundary.  Consequently both the coordinate changes and their inverses
	stabilize exactly, rather than merely converge, on every compact subset
	of the open annulus.
	
	Write
	\[
	\mathcal C_m=\T\times[0,\delta_{a_m}]
	\]
	for this paragraph only.  Since $\mathcal M_m$ is the identity off
	$\mathcal C_m$, the two difference sets satisfy
	\[
	\{z:\mathcal H_{m+1}(z)\ne\mathcal H_m(z)\}
	\subset \mathcal H_m^{-1}(\mathcal C_m)
	\subset\T\times[0,B_{\mathrm{der}}\delta_{a_m}],
	\]
	and
	\[
	\{z:\mathcal H_{m+1}^{-1}(z)\ne\mathcal H_m^{-1}(z)\}
	\subset \mathcal C_m.
	\]
	The second inclusion in the first line follows from
	\eqref{eq:Hm-inverse-cotangent} and
	Lemma~\ref{lem:hk-higher-variable}; the second line follows directly
	from
	\[
	\mathcal H_{m+1}^{-1}
	=
	\mathcal H_m^{-1}\mathcal M_m^{-1}.
	\]
	Because $a_m\to\infty$, both sequences stabilize on every compact
	subset of $\T\times(0,1]$.  Their limits take values in this same set,
	as each stabilized value is the value of a finite-stage diffeomorphism
	there.
	
	For a point and its stabilized image, choose one stage beyond both
	stabilization thresholds.  The finite-stage inverse identities then
	show that the two stabilized limits are mutual inverses.  They therefore
	define a smooth area-preserving diffeomorphism
	$\mathcal H_\infty$ on $\T\times(0,1]$, and
	\begin{equation}\label{eq:interior-conjugacy}
		F=\mathcal H_\infty^{-1}\mathcal R_\alpha\mathcal H_\infty
	\end{equation}
	there.  Hence every interior point lies on a smooth invariant circle,
	and the dynamics on that circle is smoothly conjugate to $R_\alpha$.
	
	We now complete the annulus to a disk. Since $\delta_K<1$ and every Hamiltonian perturbation is supported in
	$s\le\delta_K$, all cumulative coordinate changes are the identity on
	$\T\times(\delta_K,1]$.  Hence every $F_m$, and therefore its limit
	$F$, equals $\mathcal R_\alpha$ on this fixed inner collar.
	
	We attach a central disk carrying the same rigid irrational rotation.
	Near the seam, write $x=\theta/(2\pi)$ and use
	\[
	I=I_*+1-s
	\]
	on the annulus and $I=\pi r^2$ on the central disk.  In these
	coordinates both area forms are $dI\wedge dx$, and on a full collar the
	dynamics is
	\[
	(x,I)\longmapsto(x+\alpha,I).
	\]
	Thus the two pieces glue smoothly to an area-preserving disk
	diffeomorphism.  Passing to a fixed round Euclidean disk changes the
	relevant $C^1$ and $C^{1,\beta}$ perturbation norms by at most a fixed
	multiplicative constant, independent of the later starting index $K$.
	
	Moreover, $\mathcal H_\infty=\id$ on the rigid collar, so it extends by
	the identity to a smooth conjugacy on
	$\operatorname{int}(\D)\setminus\{0\}$.
	
	The central rigid disk has only its center as a periodic point;
	\eqref{eq:interior-conjugacy} gives no periodic point in
	$\T\times(0,1]$; and the boundary map has irrational rotation number.
	Therefore
	\[
	\Per(F)=\{0\}.
	\]
	All invariant circles carry rotation number $\alpha$.
	
	Finally, we prove the failure of $C^0$-rigidity.  The boundary map $g$
	has a wandering interval
	\[
	W=(\xi_-,\xi_+).
	\]
	The intervals $g^n(W)$ are pairwise disjoint, hence their lengths tend
	to zero as $n\to\infty$.  For all sufficiently large $n$, this length
	is less than $1/2$, and therefore equals the circle distance between
	the two endpoints of $g^n(W)$.
	
	If a subsequence $g^{n_j}$ converged uniformly to the identity, then
	\[
	g^{n_j}(\xi_-)\to\xi_-,
	\qquad
	g^{n_j}(\xi_+)\to\xi_+,
	\]
	so
	\[
	d_{\T}(g^{n_j}(\xi_-),g^{n_j}(\xi_+))
	\longrightarrow d_{\T}(\xi_-,\xi_+)>0.
	\]
	On the other hand,
	\[
	|g^{n_j}(W)|\to0
	\]
	forces the left-hand side to tend to zero.  This is a contradiction.
	Thus $g$ is not $C^0$-rigid, and restriction to the boundary shows that
	the disk map is not $C^0$-rigid.
\end{proof}

Figure~\ref{fig:interior-disk} summarizes interior stabilization and
the action-coordinate identification at the rigid seam.

\begin{figure}[!htbp]
\centering
\begin{tikzpicture}[x=1cm,y=1cm,>=Latex,font=\small]
  \node[anchor=west,font=\bfseries] at (0,4.10)
    {(a) Shrinking change collars};
  \draw[line width=.6pt,fill=gray!3] (.35,1.10) rectangle (5.72,3.55);
  \fill[gray!16] (.35,2.81) rectangle (5.72,3.55);
  \fill[gray!27] (.35,3.08) rectangle (5.72,3.55);
  \fill[gray!40] (.35,3.32) rectangle (5.72,3.55);
  \draw[line width=.6pt] (.35,3.55) -- (5.72,3.55);
  \draw[line width=.6pt,dashed] (.35,2.81) -- (5.72,2.81);
  \draw[line width=.6pt,dashed] (.35,3.08) -- (5.72,3.08);
  \draw[line width=.6pt,dashed] (.35,3.32) -- (5.72,3.32);
  \node[anchor=west] at (5.85,3.55) {$s=0$};
  \node[anchor=west] at (5.85,1.10) {$s=1$};
  \draw[line width=.6pt,fill=coregreen,rounded corners=1.5pt]
    (1.06,1.40) rectangle (4.97,2.34);
  \node[align=center] at (3.01,1.87)
    {fixed interior compact set\\eventual stabilization};
  \node[align=center,text width=6.6cm] at (3.20,.48)
    {forward changes: $s\le B_{\mathrm{der}}\delta_{a_m}$\\[3pt]
     inverse changes: $s\le\delta_{a_m}$};

  \node[anchor=west,font=\bfseries] at (7.24,4.10)
    {(b) The rigid seam};
  \draw[line width=.6pt,fill=gray!3] (8.02,1.10) rectangle (13.65,3.55);
  \fill[collarviolet] (8.02,1.72) rectangle (13.65,2.80);
  \draw[line width=.6pt,dashed] (8.02,2.25) -- (13.65,2.25);
  \draw[-{Latex[length=2mm]},line width=.6pt] (7.72,1.10) -- (7.72,3.65);
  \node[anchor=east] at (7.63,3.58) {$I$};
  \node[anchor=west,inner sep=0pt] at (13.80,2.25) {$I_*$};
  \node at (10.84,3.17) {annulus: $I=I_*+1-s$};
  \node at (10.84,1.35) {central disk: $I=\pi r^2$};
  \node[align=center] at (10.84,2.55)
    {rigid on a full collar};
  \node at (10.84,1.97) {$(x,I)\longmapsto(x+\alpha,I)$};
  \node[align=center,text width=6.5cm] at (10.83,.48)
    {common area form: $dI\wedge dx$\\[3pt]
     $\mathcal H_\infty=\id$ on the annular rigid collar};
\end{tikzpicture}
\caption{Interior stabilization and disk completion. The change regions
of the conjugacies and their inverses shrink to the outer boundary, while
each interior compact set eventually stays unchanged. Their stabilized
limits give the smooth interior conjugacy. At the inner seam $s=1$,
the annular and central-disk action coordinates match at $I=I_*$;
the maps and area forms agree on a full collar. The right panel depicts
local action charts, not the physical invariant foliation.}
\label{fig:interior-disk}
\end{figure}

\Needspace{11\baselineskip}
\begin{remark}\label{rem:further-consequences}
The finite-stage annular conjugacies are the identity on a fixed inner
collar and therefore extend smoothly by the identity over the central
disk.  Under the fixed normalization used in the proof of
Proposition~\ref{prop:master-convergence}, the resulting smooth
area-preserving disk conjugates
$\widehat F_m=\widehat{\mathcal H}_m^{-1}\mathcal R_\alpha
\widehat{\mathcal H}_m$
converge in $C^1$ to the non-$C^0$-rigid disk pseudo-rotation $F$.
Thus the $C^1$ closure of the smooth area-preserving conjugacy class of
$\mathcal R_\alpha$, taken in
$\mathrm{Diff}^1_{\omegaZero}(\D)$, contains a non-$C^0$-rigid map.
Since every conjugate of the rotation is $C^0$-rigid, its $C^1$
area-preserving conjugacy class is also not closed.  This concerns closure
in the ambient $C^1$ space and does not answer the $C^\infty$-closedness
question for Diophantine $\alpha$ in
\cite[Question~7]{FayadKrikorian}.
\end{remark}

\section{Rotation-number schedules}\label{sec:arithmetic-schedules}

We use the representative $0<\alpha<1$ and number the convergents by
$p_0/q_0=0/1$.  For every $n\ge1$ we use
\begin{equation}\label{eq:conv-est}
 \frac1{q_n+q_{n+1}}<\normT{q_n\alpha}<\frac1{q_{n+1}}
\end{equation}
and the best-approximation property
\begin{equation}\label{eq:best-approx}
  0<k<q_{n+1},\quad k\in\N
  \quad\Longrightarrow\quad
 \normT{k\alpha}\ge\normT{q_n\alpha}.
\end{equation}

The next proposition supplies both the Brjuno $C^1$ schedule and the
weighted H\"older schedule.

\begin{proposition}\label{prop:brjuno-batched-schedule}
	Let $\alpha\in\Bclass$ and fix $0<\vartheta<1/40$.
	Choose $C_{\mathrm{chk}}>0$ sufficiently large, depending only on the
	fixed one-dimensional squeeze data chosen in
	Lemma~\ref{lem:protected-squeeze}, and set
	\begin{equation}\label{eq:checkpoints}
		\mathfrak k_j=
		\left\lceil
		\log_2q_j+2\log_2(\log_2q_j+2)+C_{\mathrm{chk}}
		\right\rceil.
	\end{equation}
	After discarding a finite initial segment, list in increasing order the
	indices $j$ for which $\mathfrak k_{j+1}>\mathfrak k_j$ as $j=j(m)$, and put
	\[
	a_m=\mathfrak k_j,\qquad b_m=\mathfrak k_{j+1},\qquad
	D_m=\lfloor\vartheta q_j\rfloor,\qquad
	T_m=T_{m-1}+D_m,
	\]
	where $T_{-1}=0$.  Consecutive retained indices satisfy
	$\mathfrak k_{j(m)+1}=\mathfrak k_{j(m+1)}$, because every intervening
	checkpoint belongs to a zero-width block; hence $b_m=a_{m+1}$.
	By discarding a sufficiently long finite initial segment, the first
	retained index $j(0)$, and hence the initial scale
	$a_0=\mathfrak k_{j(0)}$, can be chosen arbitrarily large.
	Then \eqref{eq:tower-separation} holds.
	
	If the allocation exponent is $\eta=0$, then
	\begin{equation}\label{eq:brjuno-block-cost}
		\Xi_m(0)\le
		C_{\mathrm{Br}}\frac{1+\log q_{j+1}}{q_j},
		\qquad \sum_m\Xi_m(0)<\infty,
	\end{equation}
	where $C_{\mathrm{Br}}>0$ depends only on $\vartheta$ and the fixed
	one-dimensional squeeze data of
	Lemma~\ref{lem:protected-squeeze}.
	
	If $0<\beta<1$, $\alpha\in\Bclassbeta{\beta}$, and $\eta=\beta$, then
	\begin{equation}\label{eq:weighted-block-cost}
		\Xi_m(\beta)\le C_{\mathrm{WB},\beta}
		\frac{q_{j+1}^{\beta}(1+\log q_{j+1})^{2\beta}}{q_j},
		\qquad \sum_m\Xi_m(\beta)<\infty,
	\end{equation}
	where $C_{\mathrm{WB},\beta}>0$ depends only on $\beta$, $\vartheta$,
	and the fixed one-dimensional squeeze data of
	Lemma~\ref{lem:protected-squeeze}.
\end{proposition}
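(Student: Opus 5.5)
The plan is to verify three things in turn: the structural block relations, the tower-separation condition \eqref{eq:tower-separation}, and the block-cost bounds \eqref{eq:brjuno-block-cost} and \eqref{eq:weighted-block-cost}.

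\textbf{Structure.} The relation $b_m=a_{m+1}$ is bookkeeping. Since $\mathfrak k_j$ is nondecreasing in $j$, every skipped index $j$ has $\mathfrak k_{j+1}=\mathfrak k_j$. Hence the next retained index starts exactly at $\mathfrak k_{j(m)+1}$. For the width, use $2^{\mathfrak k_j}\asymp q_j(\log_2 q_j+2)^2 2^{C_{\mathrm{chk}}}$. This gives
\[
w_m=\mathfrak k_{j+1}-\mathfrak k_j\le \log_2(q_{j+1}/q_j)+2\log_2\frac{\log_2q_{j+1}+2}{\log_2q_j+2}+1\le C(1+\log q_{j+1}).
\]
The condition $D_m\ge 2w_m$ then reduces to $\vartheta q_j\gtrsim 1+\log q_{j+1}$. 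This holds for late $j$ because the Brjuno condition forces $\log q_{j+1}/q_j\to0$. This is one reason to discard an initial segment; the other is that a late start makes $a_0$ large.

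\textbf{Separation.} We need $\min_{1\le n\le 2T_m+4}\normT{n\alpha}>2A_{a_m}\delta_{a_m}$. First, $T_m=\sum_{i\le m}\lfloor\vartheta q_{j(i)}\rfloor$. The retained $q_{j(i)}$ grow at least geometrically (the $q_j$ themselves satisfy $q_{j+2}\ge2q_j$), so $T_m\le C\vartheta q_j$. With $\vartheta<1/40$, this gives $2T_m+4<q_{j+1}$ at least for large $j$. This is where I expect to adjust the constant $1/40$. The best-approximation property \eqref{eq:best-approx} then gives $\normT{n\alpha}\ge\normT{q_j\alpha}>1/(q_j+q_{j+1})\ge 1/(2q_{j+1})$.

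That lower bound involves $q_{j+1}$, while $2A_{a_m}\delta_{a_m}\asymp a_m^2 2^{-a_m}$ involves $q_j$, so this is the main obstacle. The resolution is to split the range of $n$:
\begin{itemize}
\item For $n<q_j$, the bound $\normT{n\alpha}\ge\normT{q_{j-1}\alpha}>1/(2q_j)$ is already at the right scale.
\item For $q_j\le n\le 2T_m+4\le q_j/8$ (roughly), there is no such $n$, provided $T_m\le q_j/20$.
\end{itemize}
Making $T_m\le q_j/20$ hold is exactly what $\vartheta<1/40$ combined with the geometric growth should ensure, perhaps after using the $\mathfrak k_{j+1}>\mathfrak k_j$ retention to control the sum. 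So all relevant $n$ are below $q_j$, and the required inequality becomes
\[
2(C a_m^2+3)2^{-a_m}<\frac{1}{2q_j}.
\]
With $a_m=\mathfrak k_j$, we have $2^{-a_m}\le 2^{-C_{\mathrm{chk}}}/\bigl(q_j(\log_2q_j+2)^2\bigr)$ and $a_m^2\lesssim(\log_2 q_j+2+C_{\mathrm{chk}})^2$. This inequality is precisely what the $2\log_2(\log_2q_j+2)$ term and a large $C_{\mathrm{chk}}$ are designed to absorb.

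\textbf{Costs.} For $\eta=0$, \eqref{eq:allocation-bounds} gives $1/d_{m,r}\le 2w_m/D_m\lesssim(1+\log q_{j+1})/(\vartheta q_j)$. Summing over $m$ is bounded by $B(\alpha)$ plus finitely many terms, since distinct retained $j$ are distinct indices. For $\eta=\beta$, the same bound gives $\delta_r^{-\beta}/d_{m,r}\le 2U_m(\beta)/D_m$. Next, $U_m(\beta)\le C_\beta 2^{\beta b_m}=C_\beta 2^{\beta\mathfrak k_{j+1}}\lesssim q_{j+1}^\beta(\log_2q_{j+1}+2)^{2\beta}$, which yields \eqref{eq:weighted-block-cost}. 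Summability follows from $B_\beta(\alpha)<\infty$.
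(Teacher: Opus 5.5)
Your proposal is correct and follows the paper's proof. It uses the same checkpoint estimate $2A_{a_m}\delta_{a_m}<1/(2q_j)$, the same bound $T_m\le\vartheta\sum_{n\le j}q_n\le 4\vartheta q_j$ forcing $2T_m+4<q_j$ so that best approximation at index $j-1$ applies, and the same cost bounds via $\Xi_m(\eta)\le 2U_m(\eta)/D_m$. Bounding the width by $\mathfrak k_{j+1}-\mathfrak k_j$ instead of $w_m\le b_m$ is an inessential variant.

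Two points could be tightened. The detour through $q_{j+1}$ and the hedged target $T_m\le q_j/20$ are unnecessary, since $4\vartheta<1/10$ already gives $2T_m+4<q_j$ for large $q_j$. In the $C^1$ sum, the $\sum_j 1/q_j$ contribution is not finitely many terms; it converges by the Fibonacci growth of $q_j$.
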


\begin{proof}
	Let $C$ be the constant in Lemma~\ref{lem:protected-squeeze}.  Put
	\[
	C_{\log}:=C_{\mathrm{chk}}+4.
	\]
	Choose $C_{\mathrm{chk}}$ sufficiently large that
	\[
	2(C+3)C_{\log}^2\,2^{-C_{\mathrm{chk}}}<\frac12.
	\]
	Such a choice is possible since
	$(C_{\mathrm{chk}}+4)^2 2^{-C_{\mathrm{chk}}}\to0$ as
	$C_{\mathrm{chk}}\to\infty$.
	
	The definition \eqref{eq:checkpoints} gives
	\begin{equation}\label{eq:checkpoint-bounds}
		2^{-\mathfrak k_j}
		\le
		\frac{2^{-C_{\mathrm{chk}}}}
		{q_j(\log_2q_j+2)^2},
		\qquad
		\mathfrak k_j+1
		\le
		C_{\log}(\log_2q_j+2).
	\end{equation}
	After discarding a finite initial segment, the function
	$r\mapsto(r+1)^2 2^{-r}$ is decreasing throughout the relevant range.
	Since $A_r=Cr^2+3$ and $\delta_r=2^{-r}$, for every
	$r\ge\mathfrak k_j$ we therefore have
	\begin{align}
		2A_r\delta_r
		&\le
		2(C+3)(r+1)^2 2^{-r} \notag\\
		&\le
		2(C+3)(\mathfrak k_j+1)^2 2^{-\mathfrak k_j}
		<\frac1{2q_j},
		\label{eq:checkpoint-geometric-bound}
	\end{align}
	where the last inequality follows from
	\eqref{eq:checkpoint-bounds} and the choice of
	$C_{\mathrm{chk}}$.  Thus the checkpoint scale
	$\mathfrak k_j$ is chosen sufficiently fine that the corresponding
	local support size is strictly smaller than the continued-fraction
	separation scale $q_j^{-1}$.
	
	We next control the widths of the nonempty blocks.  The sequence
	$(\mathfrak k_j)$ is nondecreasing, and the retained-index rule gives
	$b_m=a_{m+1}$.  For a retained index $j=j(m)$,
	\[
	w_m=b_m-a_m
	\le b_m=\mathfrak k_{j+1}.
	\]
	Using \eqref{eq:checkpoint-bounds} with $j+1$, and changing from
	$\log_2$ to the natural logarithm, there is a constant
	$C_{\mathrm{width}}>0$, depending only on the fixed choice of
	$C_{\mathrm{chk}}$, such that
	\begin{equation}\label{eq:checkpoint-width}
		w_m
		\le
		C_{\mathrm{width}}(1+\log q_{j+1}).
	\end{equation}
	Since $\alpha\in\Bclass$ and the denominators $q_j$ have Fibonacci
	lower growth,
	\[
	\sum_j\frac{1+\log q_{j+1}}{q_j}<\infty.
	\]
	In particular,
	\[
	\frac{w_m}{q_j}
	\le
	C_{\mathrm{width}}
	\frac{1+\log q_{j+1}}{q_j}
	\longrightarrow0
	\]
	along the retained indices.
	
Since $(\mathfrak k_j)$ is nondecreasing and tends to infinity, there
are infinitely many retained indices.  As $b_m=a_{m+1}$, the
corresponding nonempty scale blocks are consecutive and cover all
integer scales $r\ge a_0$.  By
	discarding a sufficiently long finite initial segment, we may therefore
	assume simultaneously that
	\[
	a_0\ge K_{\min},
	\qquad
	j\ge2,
	\qquad
	D_m\ge2w_m,
	\]
	and, since $D_m=\lfloor\vartheta q_j\rfloor$,
	\begin{equation}\label{eq:fresh-height-bound}
		D_m\ge\frac{\vartheta q_j}{2}.
	\end{equation}
	Indeed, the last inequality holds for all sufficiently large $q_j$,
	while $D_m\ge2w_m$ follows from
	$w_m/q_j\to0$.
	
	It remains to verify the separation of the rotation tower.  Since
	the retained indices form a subsequence of the continued-fraction
	indices,
	\[
	T_m
	=
	\sum_{i=0}^{m}D_i
	\le
	\vartheta\sum_{n\le j}q_n.
	\]
	The recurrence $q_{n+2}\ge2q_n$ gives
	\[
	\sum_{n\le j}q_n\le4q_j,
	\]
	and hence
	\[
	T_m\le4\vartheta q_j.
	\]
	Because $\vartheta<1/40$, after one further finite truncation we have
	\[
	2T_m+4<q_j.
	\]
	Thus, if $1\le n\le2T_m+4$, then $n<q_j$.  Applying
	\eqref{eq:best-approx} with the continued-fraction index $j-1$ gives
	\[
	\normT{n\alpha}
	\ge
	\normT{q_{j-1}\alpha}.
	\]
	By \eqref{eq:conv-est},
	\[
	\normT{q_{j-1}\alpha}
	>
	\frac1{q_{j-1}+q_j}
	>
	\frac1{2q_j}.
	\]
	Since $a_m=\mathfrak k_j$, the estimate
	\eqref{eq:checkpoint-geometric-bound} gives
	\[
	\frac1{2q_j}
	>
	2A_{a_m}\delta_{a_m}.
	\]
	Consequently,
	\[
	\min_{1\le n\le2T_m+4}\normT{n\alpha}
	>
	2A_{a_m}\delta_{a_m}.
	\]
	Together with $D_m\ge2w_m$, this proves
	\eqref{eq:tower-separation}.
	
	We now estimate the block costs.  First take $\eta=0$.  Then
	\[
	U_m(0)=w_m.
	\]
	By \eqref{eq:Xi-definition} and
	\eqref{eq:allocation-bounds},
	\[
	\Xi_m(0)
	\le
	\frac{2U_m(0)}{D_m}
	=
	\frac{2w_m}{D_m}.
	\]
	Using \eqref{eq:fresh-height-bound} and
	\eqref{eq:checkpoint-width}, we obtain
	\[
	\Xi_m(0)
	\le
	\frac{4C_{\mathrm{width}}}{\vartheta}
	\frac{1+\log q_{j+1}}{q_j}.
	\]
	After absorbing the fixed change of logarithm into the constant, this
	is \eqref{eq:brjuno-block-cost}.  Since the retained indices form a
	subsequence of the continued-fraction indices, its upper-bound series
	is dominated by
	\[
	\sum_j\frac{1+\log q_{j+1}}{q_j},
	\]
	which converges by the Brjuno hypothesis together with the Fibonacci
	lower growth of $q_j$.  Hence
	\[
	\sum_m\Xi_m(0)<\infty.
	\]
	
	Finally, let $\eta=\beta$.  Since $\delta_r=2^{-r}$,
	\[
	U_m(\beta)
	=
	\sum_{r=a_m}^{b_m-1}2^{\beta r}
	\le
	C_{\mathrm{geom},\beta}\,2^{\beta b_m},
	\qquad
	C_{\mathrm{geom},\beta}
	:=
	\frac{2^\beta}{2^\beta-1}.
	\]
	Since $b_m=\mathfrak k_{j+1}$, the checkpoint definition
	\eqref{eq:checkpoints} gives
	\[
	2^{b_m}
	\le
	2^{C_{\mathrm{chk}}+1}
	q_{j+1}(\log_2q_{j+1}+2)^2.
	\]
	Therefore, using again \eqref{eq:allocation-bounds} and
	\eqref{eq:fresh-height-bound},
	\begin{align*}
		\Xi_m(\beta)
		&\le
		\frac{2U_m(\beta)}{D_m}\\
		&\le
		C_{\beta,\vartheta,C_{\mathrm{chk}}}
		\frac{
			q_{j+1}^{\beta}
			(\log_2q_{j+1}+2)^{2\beta}
		}{q_j}\\
		&\le
		C_{\mathrm{WB},\beta}
		\frac{
			q_{j+1}^{\beta}
			(1+\log q_{j+1})^{2\beta}
		}{q_j}.
	\end{align*}
	This is \eqref{eq:weighted-block-cost}.  The constant
	$C_{\mathrm{WB},\beta}$ depends only on $\beta$, $\vartheta$, and the
	fixed one-dimensional squeeze data through the fixed choice of
	$C_{\mathrm{chk}}$.  The upper-bound series converges by
	\eqref{eq:weighted-Brjuno-class}, and hence
	\[
	\sum_m\Xi_m(\beta)<\infty.
	\]
\end{proof}

\Needspace{7\baselineskip}
We record the relation with the more familiar denominator-growth exponent.

\begin{lemma}\label{lem:nu-growth}
Let $\alpha\in\T\setminus\mathbb Q$ and $0<\beta<1$.
\begin{enumerate}[label=\textnormal{(\roman*)}]
\item If $\nu(\alpha)<1/\beta$, then $\alpha\in\Bclassbeta{\beta}$.
\item One has $\nu(\alpha)=1$ for Lebesgue almost every irrational
$\alpha\in\T$.
\end{enumerate}
\end{lemma}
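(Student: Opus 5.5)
For part (i), pick $\nu$ with $\nu(\alpha)<\nu<1/\beta$. By the definition of $\nu(\alpha)$ as a limsup, there is $n_0$ with $q_n>1$ and $q_{n+1}\le q_n^{\nu}$ for all $n\ge n_0$. Set $\gamma:=1-\beta\nu>0$. Then the general term of $B_\beta(\alpha)$ satisfies
\[
 \frac{q_{n+1}^{\beta}(1+\log q_{n+1})^{2\beta}}{q_n}
 \le q_n^{\beta\nu-1}\,(1+\nu\log q_n)^{2\beta}
 = q_n^{-\gamma}(1+\nu\log q_n)^{2\beta}
\]
for $n\ge n_0$. Since $(1+\nu\log x)^{2\beta}\le C_{\beta,\nu}x^{\gamma/2}$ for $x\ge2$, the tail is dominated by $C\sum_n q_n^{-\gamma/2}$. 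This sum converges because the denominators grow at least like Fibonacci numbers: $q_{n+2}\ge2q_n$ gives $q_n\ge2^{(n-1)/2}$, so the sum is bounded by a geometric series. The finitely many initial terms are finite, so $B_\beta(\alpha)<\infty$.

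For part (ii), the inequality $\nu(\alpha)\ge1$ holds for every irrational, since $q_{n+1}>q_n$. For the reverse inequality I would invoke Khinchin's theorem together with the Lévy-type growth of the denominators. First, since $\sum_n n^{-2}<\infty$, Borel–Bernstein (Khinchin) gives $a_{n+1}\le n^2$ for all large $n$, for almost every $\alpha$. Second, for almost every $\alpha$ one has $\log q_n\sim\pi^2 n/(12\log2)$ (Lévy); the weaker statement $\log q_n\ge cn$ already suffices and follows directly from the Fibonacci bound. Combining these with $q_{n+1}\le(a_{n+1}+1)q_n$ yields
\[
 \frac{\log q_{n+1}}{\log q_n}\le1+\frac{\log(n^2+1)}{cn}\to1 .
\]
Hence $\nu(\alpha)\le1$ almost everywhere, and so $\nu(\alpha)=1$ almost everywhere.

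The only genuinely nontrivial input is the almost-everywhere bound on the partial quotients, namely the convergence half of the Borel–Bernstein theorem. I would cite this from standard references rather than reprove it. If a self-contained argument is preferred, one can use the Gauss-measure estimate $|\{a_{n+1}\ge k\}|\asymp1/k$, which holds uniformly in $n$, and apply Borel–Cantelli with $k=n^2$. The remainder of the argument is routine bookkeeping of logarithms.
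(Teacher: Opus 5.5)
Your part (i) is the paper's argument: you pick an exponent strictly between $\nu(\alpha)$ and $1/\beta$, bound the general term by a negative power of $q_n$ times a logarithmic factor, and sum using Fibonacci growth. It is correct.

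Your part (ii) is also correct, but it takes a different route from the paper.

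The paper works directly with Diophantine approximation. For fixed $\gamma>0$, the set $\{\alpha:\normT{k\alpha}<k^{-1-\gamma}\}$ has measure $2k^{-1-\gamma}$. Borel--Cantelli over all integers $k$ therefore gives $\normT{k\alpha}\ge k^{-1-\gamma}$ eventually, for almost every $\alpha$. Applying this to $k=q_n$, together with $\normT{q_n\alpha}<q_{n+1}^{-1}$, yields $q_{n+1}<q_n^{1+\gamma}$ eventually. The paper then intersects over $\gamma=1/s$.

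You instead work through the partial quotients. You use the convergence half of Borel--Bernstein to get $a_{n+1}\le n^2$ eventually. (Equivalently, by Gauss-map invariance, $\mu_G\{a_{n+1}\ge k\}=\mu_G\{a_1\ge k\}=\log_2(1+1/k)$, and the Gauss and Lebesgue measures have mutually bounded densities.) You then combine this with $q_{n+1}\le(n^2+1)q_n$ and $\log q_n\ge cn$.

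The paper's route is more elementary. It needs only the trivial measure of the union of $k$ intervals of length $2\varepsilon/k$ about the points $p/k$, plus the convergent estimate; no metric theory of continued fractions is required.

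Your route uses a heavier, though standard, input. In exchange it gives a sharper quantitative conclusion, $\log q_{n+1}-\log q_n=O(\log n)$ almost everywhere. This holds outside a single null set, with no countable intersection over $\gamma$, and it implies the paper's bound $q_{n+1}<q_n^{1+\gamma}$ for every $\gamma>0$.
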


\begin{proof}
For part~\textnormal{(i)}, choose
$\nu(\alpha)<\tau<1/\beta$.  Eventually $q_{n+1}\le q_n^\tau$, and hence
\[
 \frac{q_{n+1}^{\beta}(1+\log q_{n+1})^{2\beta}}{q_n}
 \le C_{\tau,\beta}
 q_n^{\tau\beta-1}(1+\log q_n)^{2\beta}.
\]
Here $C_{\tau,\beta}>0$ is independent of $n$.  The right-hand side is
summable because $\tau\beta-1<0$ and $q_n$ grows
at least at the Fibonacci rate.

For part~\textnormal{(ii)}, fix $\gamma>0$ and let $k$ range over the
positive integers.  The set
\[
 \{\alpha\in\T:\normT{k\alpha}<k^{-1-\gamma}\}
\]
has measure $2k^{-1-\gamma}$ for all large $k$.  Borel--Cantelli and
$\normT{q_n\alpha}<q_{n+1}^{-1}$ imply
$q_{n+1}<q_n^{1+\gamma}$ eventually for almost every $\alpha$.  Intersecting
over $\gamma=1/s$, $s=1,2,\ldots$, gives $\nu(\alpha)\le1$ almost
everywhere; the reverse inequality follows from $q_{n+1}>q_n>1$ for all
sufficiently large $n$.
\end{proof}

\Needspace{18\baselineskip}
Figure~\ref{fig:arithmetic-schedule} displays the two arithmetic
allocations and their inputs to the master convergence criterion.

\begin{figure}[H]
\centering
\begin{tikzpicture}[x=1cm,y=1cm,>=Latex,font=\small]
  \node[align=center,text width=14.0cm] at (7.05,4.10)
    {$\displaystyle\mathfrak k_j=
      \left\lceil\log_2q_j+2\log_2(\log_2q_j+2)+C_{\mathrm{chk}}\right\rceil$};
  \node[draw=black!65,fill=gray!7,rounded corners=1.5pt,
    inner sep=4pt,align=center,text width=8.3cm] (blocks) at (7.05,3.29)
    {retain nonempty blocks: $a_m=\mathfrak k_j$, $b_m=\mathfrak k_{j+1}$\\
     $D_m=\lfloor\vartheta q_j\rfloor$; separated towers and $D_m\ge2w_m$};
  \node[draw=black!65,fill=softblue,rounded corners=1.5pt,
    inner sep=5pt,align=center,text width=6.0cm,minimum height=1.5cm]
    (c1) at (3.37,1.57)
    {$\eta=0$, $\alpha\in\Bclass$\\[3pt]
     $U_m(0)=w_m$\\[4pt]
     $\sum_m\Xi_m(0)<\infty\quad\Longrightarrow\quad C^1$};
  \node[draw=black!65,fill=coregreen,rounded corners=1.5pt,
    inner sep=5pt,align=center,text width=6.0cm,minimum height=1.5cm]
    (holder) at (10.74,1.57)
    {$\eta=\beta$, $\alpha\in\Bclassbeta{\beta}$, $0<\beta<1$\\[3pt]
     $U_m(\beta)\lesssim2^{\beta b_m}$\\[4pt]
     $\sum_m\Xi_m(\beta)<\infty
        \quad\Longrightarrow\quad C^{1,\beta}$};
  \draw[-{Latex[length=2mm]},line width=.6pt]
    ([xshift=-2.25cm]blocks.south) -- (c1.north);
  \draw[-{Latex[length=2mm]},line width=.6pt]
    ([xshift=2.25cm]blocks.south) -- (holder.north);
  \node[align=center,text width=14.0cm] at (7.05,.15)
    {$\nu(\alpha)<1/\beta\ \Longrightarrow\ \alpha\in\Bclassbeta{\beta}$;
     \qquad $\nu(\alpha)=1/\beta$ alone is inconclusive.};
\end{tikzpicture}
\caption{Two separate arithmetic allocations. The checkpoint correction
separates towers; retaining nonempty blocks preserves consecutive scales.
Each branch fixes its allocation and tests $\Xi_m(t)$ at $t=\eta$, with
$\Xi_m(\eta)\le2U_m(\eta)/D_m$. The respective Brjuno sums yield the
convergence criterion of Proposition~\ref{prop:master-convergence}.
The strict growth bound is sufficient; critical equality permits both
weighted membership and nonmembership.}
\label{fig:arithmetic-schedule}
\end{figure}

\begin{remark}\label{rem:brjuno-liouville}
The Brjuno $C^1$ theorem includes genuinely Liouville rotation numbers.
For example, after fixing initial continued-fraction data, choose the next
partial quotient recursively by
\[
 \mathfrak a_{n+1}=\left\lceil e^{\sqrt{q_n}}\right\rceil.
\]
Then
\[
 \log q_{n+1}=\sqrt{q_n}+\log q_n+O(1),
\]
so
\[
 \sum_n\frac{\log q_{n+1}}{q_n}<\infty
\]
by the Fibonacci lower growth of $q_n$.  On the other hand,
$q_{n+1}/q_n^N\to\infty$ for every fixed $N$, so the resulting irrational is
Liouville.  Thus the Brjuno range in Theorem~\ref{thm:main}\textnormal{(i)}
is not confined to finite-type arithmetic.
\end{remark}

For completeness, both comparisons in Remark~\ref{rem:rotation-range}
can be realized by actual continued fractions.  For fixed $0<\beta<1$,
choose $\mathfrak a_{n+1}$, after a finite initial segment, to be a nearest
positive integer to $q_n^{1/\beta-1}$.  Then
$q_{n+1}=q_n^{1/\beta}(1+o(1))$.
The Brjuno terms are bounded by a constant times $\log q_n/q_n$, whereas
the weighted terms are comparable to $(\log q_n)^{2\beta}$ and do not
even tend to zero.  This proves $\Bclassbeta{\beta}\subsetneq\Bclass$.
The critical example in that remark can be made explicit as follows.  Write $\mathfrak a_{n+1}$ for the next partial
quotient.  Once $q_n$ is large, choose $\mathfrak a_{n+1}$
to be the nearest positive integer to
\[
 q_n^{1/\beta-1}\exp\!\bigl(- (\log q_n)^{3/4}\bigr).
\]
Put $Q_n=q_n^{1/\beta}\exp\!\bigl(- (\log q_n)^{3/4}\bigr)$ and
$E_n=Q_n/q_n$.  Since $0<\beta<1$, we have $E_n\to\infty$.
Rounding changes $\mathfrak a_{n+1}$ by at most $1/2$, and
$0\le q_{n-1}\le q_n$; hence
\[
 \left|\frac{q_{n+1}}{Q_n}-1\right|
 \le \frac{3}{2E_n}\longrightarrow0.
\]
In particular,
\[
 q_{n+1}\asymp q_n^{1/\beta}
 \exp\!\bigl(- (\log q_n)^{3/4}\bigr),
\]
with constants independent of $n$.  Taking logarithms gives
$\log q_{n+1}=\beta^{-1}\log q_n-(\log q_n)^{3/4}+o(1)$, so the
logarithmic quotient tends to $1/\beta$.  The $n$-th term of
$B_\beta(\alpha)$ is bounded by
\[
 C_{\mathrm{crit},\beta}(1+\log q_n)^{2\beta}
 \exp\!\bigl(-\beta(\log q_n)^{3/4}\bigr),
\]
where $C_{\mathrm{crit},\beta}>0$ is independent of $n$.  This is summable
by Fibonacci growth.

\begin{proof}[Proof of Theorem~\ref{thm:local-nonrigidity}]
For part~\textnormal{(i)}, start the construction at an arbitrarily late
checkpoint $\mathfrak k_J$ and choose the near-equal allocation exponent
$\eta=0$.  Since the initial disk map is the rigid rotation,
Proposition~\ref{prop:brjuno-batched-schedule},
Corollary~\ref{cor:holder-stage}, and the fixed disk norm comparison give a
constant $C_{\mathrm{loc},0}>0$, independent of $J$, such that
\[
 \|F-\mathcal R_\alpha\|_{C^1(\D)}
 \le C_{\mathrm{loc},0}
 \sum_{j\ge J}\frac{1+\log q_{j+1}}{q_j}.
\]
The right-hand side tends to zero because $\alpha$ is Brjuno.

For part~\textnormal{(ii)}, start at the same kind of late checkpoint and
choose $\eta=\beta$.  There is a constant
$C_{\mathrm{loc},\beta}>0$, independent of $J$, such that
\[
 \|F-\mathcal R_\alpha\|_{C^{1,\beta}(\D)}
 \le C_{\mathrm{loc},\beta}
 \sum_{j\ge J}
 \frac{q_{j+1}^{\beta}(1+\log q_{j+1})^{2\beta}}{q_j}.
\]
This tail tends to zero because $\alpha\in\Bclassbeta{\beta}$.
In either case choose $J$ so that the corresponding bound is less than
$\varepsilon$.  Proposition~\ref{prop:master-convergence} supplies the
pseudo-rotation, Denjoy boundary dynamics, unique periodic center, smooth
interior foliation, and failure of $C^0$-rigidity.
\end{proof}

\begin{proof}[Proof of Corollary~\ref{cor:local-nonrigidity-nu}]
Lemma~\ref{lem:nu-growth}\textnormal{(i)} gives
$\alpha\in\Bclassbeta{\beta}\subset\Bclass$, so the first assertion follows from
Theorem~\ref{thm:local-nonrigidity}\textnormal{(ii)}.
Part~\textnormal{(ii)} gives the almost-everywhere statement.  
\end{proof}

\begin{proof}[Proof of Theorem~\ref{thm:main}]
For part~\textnormal{(i)}, choose $\eta=0$.  Equations
\eqref{eq:brjuno-block-cost} and \eqref{eq:C1-master-condition} give the
$C^1$ example.  For part~\textnormal{(ii)}, fix $0<\beta<1$ with
$\alpha\in\Bclassbeta{\beta}$ and choose $\eta=\beta$.  Then
\eqref{eq:weighted-block-cost} and
\eqref{eq:Holder-master-condition} give the $C^{1,\beta}$ example.  The
remaining conclusions are those of Proposition~\ref{prop:master-convergence}.
\end{proof}

\begin{proof}[Proof of Corollary~\ref{cor:superpolynomial-growth}]
Choose the Brjuno--Liouville number from
Remark~\ref{rem:brjuno-liouville}, so that
\[
 \log q_{n+1}=\sqrt{q_n}+\log q_n+O(1).
\]
In particular, $q_{n+1}/q_n^N\to\infty$ for every fixed $N>0$.
Let $F$ be the non-$C^0$-rigid $C^1$ pseudo-rotation supplied by
Theorem~\ref{thm:main}\textnormal{(i)} for this one fixed $\alpha$.

Fix $A>0$.  If the asserted limsup were finite, its eventual bound,
together with the finitely many initial iterates, would give a constant
$C_A>0$ such that
\[
 \|DF^m\|_{C^0(\D)}\le C_A m^A
 \qquad(m\ge1).
\]
Pass to the infinite parity subsequence of continued-fraction
convergents for which
\[
 0<\varepsilon_n:=q_n\alpha-p_n<q_{n+1}^{-1}.
\]
\Needspace{11\baselineskip}
The map $F^{q_n}$ is again a pseudo-rotation: a periodic point of an
iterate would be a periodic point of $F$.  Its rotation number is
$q_n\alpha+\Z=\varepsilon_n+\Z$.  The differentiable form of
\cite[Corollary~B]{AFLXZ}, with any fixed geometric norm-normalization
factor absorbed into $C_{\mathrm{disp}}>0$, gives
\begin{align*}
 \|F^{q_n}-\id\|_{C^0(\D)}
 &\le C_{\mathrm{disp}}
       \bigl(1+\|DF^{q_n}\|_{C^0(\D)}\bigr)\sqrt{\varepsilon_n}\\
 &\le C_{\mathrm{disp}}
       \frac{1+C_Aq_n^A}{\sqrt{q_{n+1}}}
 \longrightarrow0.
\end{align*}
The constant $C_{\mathrm{disp}}$ is independent of $n$; the last limit
uses $q_{n+1}/q_n^{2A}\to\infty$.
Thus $F^{q_n}$ converges uniformly to the identity along this subsequence,
contradicting the choice of $F$.  Since the same map $F$ was fixed before
$A$, the conclusion holds for every $A>0$.
\end{proof}

\section*{Acknowledgements}
The authors thank Prof. Zhihong Xia for helpful questions and comments. We acknowledge the use of ChatGPT for computations of certain details and for language proofreading. The central idea, the overall structure of the proof, and the final formulation of the results were developed entirely by the authors. The correctness of the computational details performed by the AI has been manually verified by the authors, who take full responsibility for this paper.


\begin{thebibliography}{99}

\bibitem{AFLXZ}
A.~Avila, B.~Fayad, P.~Le Calvez, D.~Xu, and Z.~Zhang,
\emph{On mixing diffeomorphisms of the disc},
Invent. Math. \textbf{220} (2020), no.~3, 673--714.
\href{https://doi.org/10.1007/s00222-019-00937-7}{doi:10.1007/s00222-019-00937-7}.


\bibitem{BramhamRigid}
B.~Bramham,
\emph{Pseudo-rotations with sufficiently Liouvillean rotation number are $C^0$-rigid},
Invent. Math. \textbf{199} (2015), 561--580.
\href{https://doi.org/10.1007/s00222-014-0525-0}{doi:10.1007/s00222-014-0525-0}.

\bibitem{BramhamZhangSurvey}
B.~Bramham and Z.~Zhang,
\emph{Recent results and open questions on pseudo-rotations},
in \emph{A Vision for Dynamics in the 21st Century: The Legacy of Anatole Katok},
Cambridge Univ. Press, 2024, 67--93.
\href{https://doi.org/10.1017/9781009278898.004}{doi:10.1017/9781009278898.004}.


\bibitem{FayadKrikorian}
B.~Fayad and R.~Krikorian,
\emph{Some questions around quasi-periodic dynamics},
in \emph{Proceedings of the International Congress of Mathematicians--Rio de Janeiro 2018, Vol.~III},
World Scientific, 2018, 1909--1932.
\href{https://doi.org/10.1142/9789813272880_0123}{doi:10.1142/9789813272880\_0123}.

\bibitem{FayadKrikorianHerman}
B.~Fayad and R.~Krikorian,
\emph{Herman's last geometric theorem},
Ann. Sci. \'Ec. Norm. Sup\'er. (4) \textbf{42} (2009), no.~2, 193--219.
\href{https://doi.org/10.24033/asens.2093}{doi:10.24033/asens.2093}.

\bibitem{Herman}
M.~R.~Herman,
\emph{Sur la conjugaison diff\'erentiable des diff\'eomorphismes du cercle \`a des rotations},
Publ. Math. Inst. Hautes \'Etudes Sci. \textbf{49} (1979), 5--233.
\href{https://doi.org/10.1007/BF02684798}{doi:10.1007/BF02684798}.

\bibitem{Russmann}
H.~R\"ussmann,
\emph{Stability of elliptic fixed points of analytic area-preserving mappings under the Bruno condition},
Ergodic Theory Dynam. Systems \textbf{22} (2002), no.~5, 1551--1573.
\href{https://doi.org/10.1017/S0143385702000974}{doi:10.1017/S0143385702000974}.


\end{thebibliography}
\end{document}